\renewcommand{\arraystretch}{1.2}
\newcommand{\N}{\mathbb{N}}
\newcommand{\R}{\mathbb{R}}
\newcommand{\bR}{\bar{\R}}
\newcommand{\Gr}{\text{Graph }}
\newcommand{\bF}{\F^m}
\newcommand{\X}{\mathcal{X}}
\newcommand{\bX}{\bar{\X}}
\newcommand{\FuB}{-\mu,B}
\newcommand{\F}{\mathcal{F}}
\newcommand{\dom}{\text{dom}}
\newcommand{\com}{\textcolor{black}}
\newcommand{\AR}{\textcolor{black}}
\newcommand{\ARj}{\textcolor{black}}
\journalname{Mathematical Programming}
\begin{document}

\title{A constraint-based approach to function interpolation, with application to performance estimation for weakly convex optimization.\footnote{\ARj{Parts of this work, namely, parts of the introduction, a counterexample in Table~\ref{tab:counterex}, and Theorem~\ref{thm:extensive_analysis_results}, also appear in the conference tutorial paper~\cite{rubbens2023interpolation}, co-authored by the present authors. However,~\cite{rubbens2023interpolation} contains no proofs and serves solely as a high-level overview. 
}}}

\titlerunning{Constraint-based interpolation, and weakly convex performance analysis.}  

\author{Anne Rubbens \and Julien M. Hendrickx}

\institute{ICTEAM Institute, UCLouvain, Louvain-la-Neuve, Belgium\\
              \email{name.surname@uclouvain.be} 
}

\date{Received: date / Accepted: date}

\maketitle
\begin{abstract}
We consider the problem of obtaining interpolation constraints for function classes, i.e., necessary and sufficient constraints that a set of points, function values and (sub)gradients must satisfy to ensure the existence of a global function of the class considered, consistent with this set. The derivation of such constraints is crucial, e.g., in the performance analysis of optimization methods, since obtaining a priori tight performance guarantees requires using a tight description of function classes of interest. We propose an approach that allows setting aside all analytic properties of the function class to work only at an algebraic level, and to obtain counterexamples when a condition characterizing a function class cannot serve as an interpolation constraint. As an illustration, we provide interpolation constraints for the class of weakly convex functions with bounded subgradients, and rely on these constraints to outperform state-of-the-art bounds on the performance of the subgradient method on this class.
\keywords{Function interpolation \and function extension \and computer-aided  analyses \and first-order methods analysis.}
\subclass{90C25 \and 90C20 \and 68Q25 \and 90C22}
\end{abstract}
\section{Introduction}
Consider the problem of obtaining exact discrete definitions of function classes $\F$, whose need arises in any situation where one manipulates or has access to a finite set $S=\{(x_i,f_i,g_i)\}_{i=1,\ldots,N}$, and requires it to be $\mathcal{F}$\emph{-interpolable}. \AR{That is, $S$ should be consistent with some function $f \in \F$ defined everywhere, in the sense that $f(x_i)=f_i$, $g_i\in \partial f(x_i)$, $\forall x_i\in S$, where $\partial f(x_i)$ is the subdifferential of $f$ at $x_i$, see, e.g., \cite[8(3)]{rockavariational}}.

For instance, consider the problem of conducting a worst-case performance analysis of an optimization method. In this context, one derives bounds on the evolution of some performance measure, holding for any initial point satisfying some initial condition and for any element of a given function class $\mathcal{F}$, e.g., $\mathcal{F}_{0,L}$ the class of smooth convex functions, which can for example be defined as functions $f$ satisfying
     \begin{align}\label{eq:smoothconvexity}
\begin{cases}
    &f(x)\geq f(y)+\langle \nabla f(y),x-y\rangle\\& \|  \nabla f(x)- \nabla f(y)\|\leq L\|x-y\| 
\end{cases} \quad  \forall x,y \in \R^d. 
\end{align}
Obtaining bounds as tight as possible is essential to understand bottlenecks of the method analyzed as well as tune their parameters by optimizing the bound derived. 

To obtain such bounds, one typically manipulates the quantities involved in the method, that is the iterates of the method, their function values and subgradients, as well as the optimum and its function value, by combining inequalities involving these quantities and translating the knowledge that (i) they are given by the method's definition and (ii) they are evaluations of a smooth convex function,
see, e.g., \cite{nesterov2018lectures} for examples of such derivations of bounds. \com{This requires translating the global characterization of $\F$, e.g., \eqref{eq:smoothconvexity} for $\F_{0,L}$, into a discrete characterization of $\F$, i.e., conditions satisfied by any finite $\F$-interpolable dataset.}

A natural way to obtain such a characterization is to discretize global characterizations as \eqref{eq:smoothconvexity}, since any of these discretizations is necessarily satisfied at all pairs of iterates. However, while several sets of constraints can be equivalent when imposed everywhere, e.g., for $\F_{0,L}$, \eqref{eq:smoothconvexity} and      \begin{align}\label{truesmoothconvexity}
    f(x)\geq f(y)+\langle \nabla f(y),x-y\rangle+\frac{1}{2L}\|\nabla f(x)-\nabla f(y)\|^2\ \forall x,y \in \R^d \text{ \cite{nesterov2018lectures}},
\end{align}
their discretizations are not necessarily equivalent, see, e.g., \cite[Fig. 1]{taylor2017smooth} or Figure \ref{fig:nonequivalentdiscretizedconstraints}. In particular, some discretized characterizations of a class $\F$ are weaker than others, in the sense that they are satisfied by a larger region of triplets, which are not $\F$-interpolable. \com{This is the case, e.g., for \eqref{eq:smoothconvexity}. }
Any analysis relying on these weaker characterizations leads to a priori overconservative bounds, since these are valid for a larger set of admissible values than those actually corresponding to some $f\in \F$. 
\begin{figure}[!htb]
    \centering 
  \includegraphics[width=0.7\textwidth]{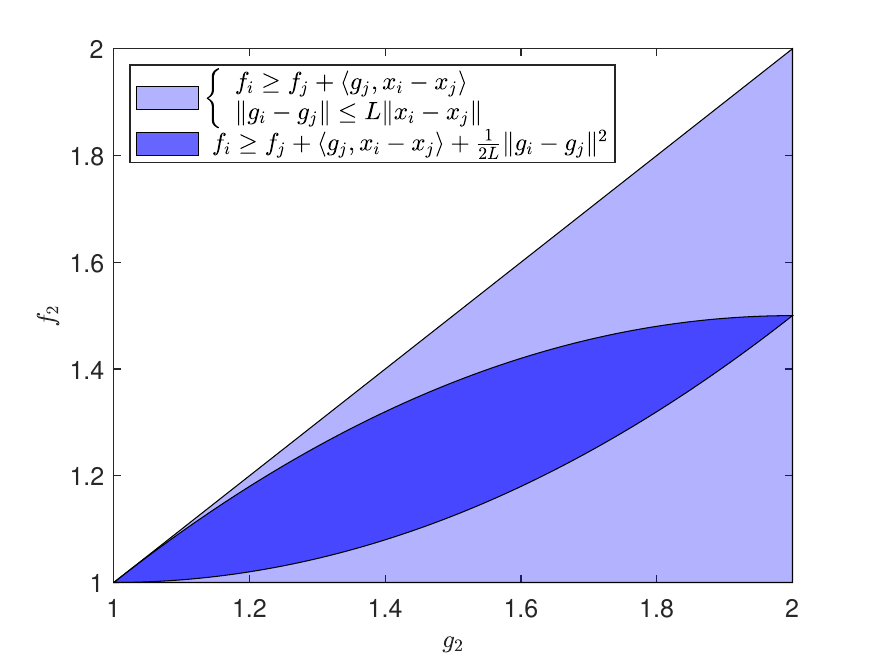}
    \caption{Given $L=1$, $(x_1,f_1,g_1)=(0,0,1)$ and $x_2=1$, allowed region for $(f_2, g_2)$ according to \eqref{eq:smoothconvexity} and \eqref{truesmoothconvexity}. The region allowed by \eqref{truesmoothconvexity} is included in and significantly smaller than the one allowed by \eqref{eq:smoothconvexity}. This translates into the fact that \eqref{eq:smoothconvexity} is weaker than the one of \eqref{truesmoothconvexity}, since satisfied by triplets that are not $\F_{0,L}$-interpolable, corresponding in this example to the difference of the two regions. \AR{From \cite[Figure 2]{rubbens2023interpolation}}} 
\label{fig:nonequivalentdiscretizedconstraints}
\end{figure}

To obtain tight bounds, one must rely on the strongest constraints possible, denoted by \emph{interpolation constraints}, \com{that is discretized definitions whose satisfaction by a dataset is not only necessary, but also sufficient to ensure its $\mathcal{F}$-interpolability.} For instance, the discretized version of \eqref{truesmoothconvexity} is an interpolation constraint for $\F_{0,L}$, see, e.g. \cite[Theorem 4]{taylor2017smooth}. \com{Hence, manipulating a set of iterates satisfying this constraint is strictly equivalent to manipulating a function $f \in \F_{0,L}$ evaluated at the iterates.} 

\com{Obtaining an exact bound on the performance of a method on given function class $\F$ does not only require describing the set of iterates via interpolation constraints, but also optimally combining these constraints and the method's definition.} While obtaining the optimal combination by hand is in general challenging, it can be computed automatically for a large number of methods, performance measures and function classes via the Performance Estimation Problem (PEP) framework \cite{drori2014performance,thesis}. This framework formulates the search of a worst-case function as an optimization problem over the set $S$ of iterates and optimum, under the constraint that $S$ is $\F$-interpolable, see Appendix \ref{app:PEP} for details. 
\subsection{Contributions} \label{sec:contributions}

Proposing interpolation constraints for a given function class, and proving their validity, are, to the best of our knowledge, open problems. In this work, we develop tools to address these challenges and illustrate their application on several examples:

\begin{itemize}
    \item \textbf{Constraint-based approach.} Rather than guessing, from a specific function class, candidate interpolation constraints for this class, we suggest a reverse approach. We start from a class of constraints and identify those that (i) define meaningful function classes $\F$ when imposed everywhere, and (ii) are interpolation constraints for $\F$ (Section~\ref{sec:interp}).
    
    \item \textbf{Algebraic assessment of interpolability.} To assert interpolability of given constraints, we introduce the notion of \emph{pointwise extensibility}, which captures whether a constraint allows consistent extension to any additional point. We prove this condition is equivalent to interpolability under reasonable regularity assumptions (Section~\ref{sec:extensibility}).
    
    \item \textbf{Systematic analysis of a class of constraints.} Starting from the class of constraints that can be written as a single inequality, linear in function values and scalar products of points and subgradients, we show that the only interpolation constraints of the class, that describe meaningful functions, are the ones describing smooth
weakly/strongly convex functions, as in \cite[Theorem 4]{taylor2017smooth}, and the one describing quadratic functions $f=\frac{\mu}{2}\|x\|^2$ (Section~\ref{sec:gram})
    \item \textbf{Automated testing of a constraint's interpolability.} Relying on pointwise extensibility, we develop a numerical heuristic to generate counterexamples to the interpolability of given constraints. We obtain such counterexamples on several classical characterizations of function classes (Section~\ref{sec:counterex}).
    \item \textbf{Tight description of a class of weakly convex functions.} Taking a step back from the constraint-based approach, but as to further illustrate pointwise extensibility, we provide interpolation constraints for \emph{weakly convex} functions, i.e., convex up to the addition of a quadratic term, with bounded subgradients. This allows improving on existing PEP-based bounds on this class \cite{das2024branch}, that relied on non-tight constraints (Section~\ref{sec:weakconv}).
\end{itemize}
\subsection{Related work}
\paragraph{Automated performance analysis.}  
Interpolation constraints applied to optimization were introduced in~\cite{taylor2017smooth} to render the PEP framework, developed in~\cite{drori2014performance}, tight. Such constraints were then obtained for a wide range of function classes and operators, including smooth (strongly/weakly) convex functions~\cite{taylor2017smooth}, smooth Lipschitz functions~\cite{thesis}, indicator functions~\cite{luner2024performance,thesis,taylor2017exact}, difference-of-convex and relatively smooth convex functions~\cite{dragomir2021optimal}, convex functions with quadratic upper bounds~\cite{goujaud2022optimal}, and monotone, cocoercive, Lipschitz, or linear operators~\cite{bousselmi2024interpolation,ryu2020operator}, see, e.g.,~\href{https://pepit.readthedocs.io/en/latest/api/functions_and_operators.html}{PEPit documentation}~\cite{goujaud2024pepit},\cite{rubbens2023interpolation} for surveys.

These constraints were combined with the PEP framework to obtain tight performance guarantees for various first-order methods~\cite{colla2023automatic,de2017worst,drori2020complexity,dragomir2021optimal,gannot2022frequency,guille2022gradient,goujaud2024pepit,kamri2023worst,lee2025convergence,taylor2019stochastic,taylor2018exact}, and to design new optimal methods~\cite{barre2022principled,drori2017exact,drori2020efficient,das2024branch,kim2016optimized,kim2018adaptive}. Interpolation constraints are also used in the Integral Quadratic Constraints (IQC) framework, an automated approach relying on tools from control theory to certify iteration-wise progress~\cite{lessard2016analysis,taylor2018lyapunov,van2017fastest}. In contrast to the approach proposed here, based on pointwise extensibility and constraint structure, the techniques used in this context typically rely on analytical properties of the function classes of interest.

\paragraph{Extension of continuous functions.}  
Interpolation constraints can be also be viewed as conditions ensuring the extension of a property outside of its domain, a topic extensively studied since Whitney's foundational work on continuous extensions~\cite{whitney1934differentiable,whitney1992analytic}; see also~\cite{brudnyi1994generalizations,brudnyi2001whitney,brudnyi2011methods,fefferman2009whitney} and references therein. Most results in this literature differ from the approach we propose in several ways. First, some focus on qualitative extensions where regularity constants may increase, see, e.g.,~\cite{azagra2017whitney}, while we seek quantitative guarantees, essential in the context of establishing properties related to these constants. Second, some results require assumptions on the domain, such as its convexity~\cite{azagra2019smooth,yan2012extension}, whereas we only impose conditions on finite sets of iterates. Third, certain works address interpolation of function values only~\cite{brudnyi1994generalizations,fefferman2009whitney,fefferman2017interpolation}, while our goal includes subgradient interpolation.

Still, some settings align closely with our framework, such as the extension of Hölder-Lipschitz functions~\cite{mcshane1934extension,minty1970extension} and that of smooth convex functions~\cite{azagra2017extension,gruyer2009minimal}. These last two works treat the extension problem algebraically, via \emph{infinite} pointwise extensibility, requiring that any new point can be added to an arbitrary (possibly infinite) dataset satisfying a given constraint $p$. However, such infinite extensions are typically harder to verify than the \emph{finite} pointwise extensibility we focus on, motivating the perspective adopted here.\vspace{-0.3cm}

\subsection{Notation}
Throughout, we consider a Euclidean space $\mathbb{R}^d$ with standard inner product $\langle \cdot,\cdot\rangle$ and induced norm $\|\cdot\|$. We denote by $\mathbb{Q}^d$ the set of rationals in $\R^d$, by $x^{(k)}$ the $k^{\text{th}}$ component of any $x\in \R^d$ and by $\mathcal{B}_{c,R}:=\{y \in \mathbb{R}^d:\ \|y-c\|\leq R\}$ a sphere of center $c \in \R^d$ and radius $R \in \mathbb{N}$. Given $N \in \mathbb{N}$, we let $[N]=\{0, \ldots,N\}$. 

Given $f:\R^d\to \R$, we say $f$ is Lipschitz continuous if \cite{nesterov2018lectures}  $ \|f(x)-f(y)\|\leq L\|x-y\| \ \forall x,y\in \R^d$, $f$ is smooth if differentiable everywhere with first derivative Lipschitz continuous, $f$ is convex if $    f(\alpha x+(1-\alpha)y)\leq \alpha f(x)+(1-\alpha)f(y) \ \forall x, y \in \R^d, \ \alpha \in (0,1),$  and for any $\rho \geq 0$ $f$ is $\rho$-weakly (respectively $\rho$-strongly) convex if $f+\rho \frac{\|\cdot\|^2}{2} $ (respectively $f-\rho \frac{\|\cdot\|^2}{2}$) is convex. 

We refer by $\partial f(x)$ to the regular subdifferential  \cite[8(3)]{rockavariational} of $f$ at $x$, defined as the set of vectors $v\in \R^d$, satisfying, $\forall y\in\R^d$,  \begin{align}\label{eq:subdiff_general}
    f(y)\geq f(x)+\langle v,y-x\rangle +\omega(\|x-y\|).
\end{align}
For $f$ smooth, $\partial f(x)=\{\nabla f(x)\}$, and for $f$ (strongly) convex, it reduces to the classical subdifferential. By an abuse of notation, we denote any $v\in\partial f(x)$ a subgradient of $f$ at $x$. 

\ARj{Finally, we use the notation $\rightrightarrows$ to denote a multi-valued mapping; that is, given $\X\subseteq\R^d$, $F \colon \X \rightrightarrows \R^n$ denotes a mapping that may associate multiple values in $\R^n$ to each point $x \in \X$. We refer to any of these values by $F_x\in F(x)$. We denote by $\Gr F:=\{(x,F_x)\in \X\times \R^n \big| \ F_x\in F(x)\}$ the graph of $F$.}
\section{Constraint-based approach to interpolation}\label{sec:interp}
\AR{This paper considers function classes $\F$ characterized via \emph{pairwise} algebraic constraints $p$, that involve evaluations of a function $f$ and its (sub)gradient, see, e.g., \eqref{eq:smoothconvexity} and \eqref{truesmoothconvexity}.} \AR{Formally, given any $\X \subseteq \R^d$, let $$F:\X\rightrightarrows \R\times\R^d:x \rightrightarrows (f(x),g(x)),$$ denote a set-valued mapping, where $f(x)$ is single-valued and $g(x)$, possibly multi-valued, is intended to represent a subgradient of $f$ at $x$, but is an a priori unrelated quantity. We consider constraints
\begin{align*}
    p:(\R^d \times \R \times \R^d)^2 \to \R^\ell, \quad \ell \in \mathbb{N},
\end{align*}
and say that $p$ is satisfied at a pair $(x_1, x_2) \in \X$ if $    p((x_1, F_{x_1}), (x_2, F_{x_2})) \geq 0 \quad \text{for all } F_{x_i} \in F(x_i), \ i=1,2,$ where the inequality is to be taken elementwise for $\ell \geq 2$.}

The proposed constraint-based approach stems from the observation that any pairwise constraint defines a function class over $\X \subseteq \R^d$, simply by requiring the constraint to hold at all pairs of points in $\X$.

\begin{definition}\label{def:class}
Given a pairwise constraint $p$ and $\X \subseteq \R^d$, a function class defined by $p$ on $\X$ consists of all mappings $F$ satisfying $p$ on $\X$.
\AR{
\begin{align*}
    \F_p(\X) := \big\{ F : \X \rightrightarrows \R \times \R^d \ \big| \ 
    &p((x_1, F_{x_1}), (x_2, F_{x_2})) \geq 0, \\
    &\forall x_1,x_2 \in \X, \ \forall F_{x_i} \in F(x_i) \big\}.
\end{align*}}
\end{definition}

\AR{Further, we say $F\in \F_p(\R^d)$ is \emph{maximal} if there exists no proper extension of $F$ in $\F_p(\R^d)$. We denote by $\bF_p(\R^d)$ the class of maximal functions in $\F_p(\R^d)$. This class typically encompasses functions satisfying $p$ \emph{for all subgradients} in their subdifferential, rather than possibly for a single one.
\begin{definition}
    Let $p$ a pairwise constraint. We let 
    \begin{align}
        \bF_p(\R^d):=\{F:\R^d\rightrightarrows\R\times\R^d\big|\ &F\in \F_p(\R^d), \\& \not\exists \bar F\in\F_p(\R^d) \text{ s.t. } \Gr(F)\subset \Gr(\bar F)\}.\nonumber
    \end{align}
\end{definition}}
\AR{Our goal, starting from classes of pairwise constraints, is to only keep those that (i) imply a subdifferential relationship between $f$ and $g$ when imposed everywhere, and (ii) are interpolation constraints. We thus formally present these two properties.}

\paragraph{Differentially consistent constraints} Notice that at this stage, given $F:\X \rightrightarrows \R \times \R^d:x \rightrightarrows (f(x), g(x))$, with $F \in \F_p(\X)$, $f(x)$ and $g(x)$ are related only via the algebraic relation imposed by $p$. We are interested in \emph{differentially consistent} constraints, i.e., constraints which, when satisfied everywhere, imply $g(x) \in \partial f(x)$. 
\AR{
\begin{definition}[Differentially consistent constraint]
A pairwise constraint $p$ is \emph{differentially consistent} if, given $\{(x_i,f_i,g_i),(x_j,f_j,g_j)\}\in (\R^d\times \R\times \R^d)^2$, the condition
\[
p((x_i,f_i,g_i),(x_j,f_j,g_j)) \geq 0,
\]
implies
\begin{equation}\label{eq:subdiff_gx}
    f_j \geq f_i + \langle g_i, x_j - x_i \rangle + \omega(\|x_i - x_j\|).
\end{equation}
\end{definition}}

\begin{example}[Convex functions] The pairwise constraint $p$ characterizing convex functions on their effective domain, and defined as \begin{align}\label{eq:conv}
    &p((x_i,f_i,g_i),(x_j,f_j,g_j))= f_j-f_i-\langle g_i,x_j-x_i\rangle, 
    \end{align}
    is differentially consistent since its satisfaction implies $f_j\geq f_i+\langle g_i,x_j-x_i\rangle$, hence \eqref{eq:subdiff_gx}. 
    \label{exconvsmooth}
\end{example}
\begin{example}\label{trivialp}
    The trivial pairwise constraint $p((x_i,f_i,g_i),(x_j,f_j,g_j))=0$ is not differentially consistent since satisfied by all mappings $F$ on $\R^d$, e.g., by $f(x)=0, \ g(x)=x, \forall x \in \R^d$, which do not satisfy $g(x)\in \partial f(x)$.
\end{example}

\paragraph{Interpolation constraints} 
Trivially, for any constraint $p$ and any $\X_1 \subseteq \X_2$, a mapping $F_2 \in \F_p(\X_2)$ can always be restricted to a mapping $F_1 \in \F_p(\X_1)$. However, the converse does not hold in general. In particular, a mapping satisfying $p$ on some $\X \subset \R^d$ is not necessarily extendable to a mapping in $\bF_p(\R^d)$, see, e.g., Figure~\ref{fig:nonequivalentdiscretizedconstraints} for an example. We are interested in constraints $p$ for which such an extension is always possible; we refer to these as \emph{interpolation constraints}.

\begin{definition}[Interpolation constraint] \label{def:intrp}
We say a pairwise constraint $p$ is an \emph{interpolation constraint} for $\bF_p(\R^d)$, if
\[
\forall \text{ finite } \X \subset \R^d, \ \forall F \in \F_p(\X),\quad \exists \bar F \in \bF_p(\R^d) \text{ such that }  F(x) = \bar F(x) \ \forall x \in \X.
\]
\end{definition}

The approach to interpolation of Definition \ref{def:intrp} differs from what is classically done in, e.g., \cite{taylor2017smooth}. Instead of defining $p$ from a function class $\F$, as a necessary and sufficient condition to impose on any finite set $\X$ to ensure its $\F$-interpolability, we directly focus on $p$ as an interpolation constraint for the class it implicitly defines. \com{Nevertheless, these are for almost all practical purposes equivalent notions, see Appendix \ref{app:equiv} for details.} 
\begin{remark}
\ARj{Throughout, to avoid additional technical difficulties, we restrict attention to finite-valued functions. However, one could, in different ways, explicitly allow function classes whose elements are defined on \( \mathbb{R}^d \), but take finite values only on an \emph{effective domain} \( \bar{\mathcal{X}} \subseteq \mathbb{R}^d \), assigning infinite values (and correspondingly empty subdifferentials) outside \( \bar{\mathcal{X}} \). For instance, one could introduce the function class
\begin{align}\label{eq:infinnte}
    \F^{m,\infty}_p(\R^d):=\bigg\{F:\R^d\rightrightarrows \bR\times (\R^d\cup\emptyset)\big |&\ \exists \bX \subseteq \R^d\text{ convex s.t. }  F_{|\bX} \in \bF_p(\bX)\nonumber \text{ and}\\
     F(x)&=(+\infty,\emptyset), x\in \R^d\setminus \bX  \bigg\},\end{align}
     where $F_{|\bX}$ denotes the restriction of $F$ to $\bX$, and require extension to $\F^{m,\infty}_p(\R^d)$ instead of $\bF_p(\R^d)$.}

\end{remark}
\paragraph{Compact notation for a pairwise constraint}
\AR{Let $p$ be a pairwise constraint, and consider some finite set $\X \subset \R^d$, along with a mapping $F \in \F_p(\X)$. We denote by $S = \{(x_i, f_i, g_i)\}_{i \in [N]} \subset (\R^d \times \R \times \R^d)^N$ the combination of all points in $\X$ and their associated values under $F$. That is, if $F$ is multi-valued, $S$ may include multiple entries with the same $x_i$, associated to different values of $g_i$. We denote by $p^{ij} := p\left((x_i, f_i, g_i), (x_j, f_j, g_j)\right)$ an evaluation of $p$, and we say that $S$ satisfies $p$ if $p^{ij} \geq 0$ for all $i,j \in [N]$.}

\AR{Finally, when clear from the context, and given $F = (f, g) \in \bF_p(\R^d)$, we denote by $p^{xy} := p\left((x, f(x), g_x), (y, f(y), g_y)\right)$ any evaluation of $p$ at points $x, y \in \R^d$, for some $(f(x), g_x) \in F(x)$ and $(f(y), g_y) \in F(y)$.}

\section{Pointwise extensible constraints} \label{sec:extensibility}
Definition \ref{def:intrp} allows for an algebraic approach to establish interpolability, based on idea of extending \com{a mapping satisfying a constraint $p$ on a finite set to just one (arbitrary) point, in the spirit of what is done in, e.g., \cite{azagra2017extension,gruyer2009minimal} with respect to potentially infinite sets.}
 \begin{definition}[Pointwise extensible constraint] \label{def:pointwise_ext}
We say a pairwise constraint $p$ is pointwise extensible if $$\forall \text{ finite }\X\subset\R^d, \forall z\in\R^d, \ \forall F \in \F_p(\X),\ \exists \bar F\in \F_p(\X\ \cup \ z )\text{ s.t. }F(x)=\bar F(x)\  \forall x\in \X.$$
\end{definition}
 \begin{example}[Convex functions]\label{ex:convex_pointwise}
Let $p$ be defined as in \eqref{eq:conv}. For some $N \in \N$, consider $S=\{(x_i,f_i,g_i)\}_{i\in [N]}$, where $S$ satisfies $p$. Given any $x\in \R^d$, let 
  \begin{align*}
    i_\star=\underset{i\in [N]}{\text{argmax }} f_i+\langle g_i,x-x_i\rangle, \quad  f= f_{i_\star}+\langle g_{i_\star},x-x_{i_\star}\rangle \text{ and } g=g_{i_\star}. 
\end{align*}
By definition of $i_\star$, $f\geq f_i+\langle f_i,x-x_i\rangle \ \forall i\in [N].$
In addition, since $p^{ji_\star}\geq0$, 
  \begin{align*}
    f_j\geq f_{i_\star}+\langle g_{i_\star},x_j-x_{i_\star}\rangle =f+\langle g,x_j-x\rangle \
     \forall j\in [N]. 
\end{align*}
Hence, these $f$ and $g$ extend the constraint to $x$ and $p$ is pointwise extensible.
\end{example}

Under some regularity assumptions, we show equivalence between pointwise extensible and interpolation constraints. By definition, the first implication holds.
 \begin{proposition}
    Let $p$ be a pairwise constraint. If $p$ is an interpolation constraint for $\bF_p(\R^d)$, then $p$ is pointwise extensible.
    \label{interptoext}
\end{proposition}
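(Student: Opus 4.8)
The plan is to prove the contrapositive-free direction directly from the definitions, since the claim is essentially an unpacking of what "interpolation constraint" and "pointwise extensible" mean. The key observation is that pointwise extensibility is a special case of the interpolation property: it asks for an extension to a single new point $z$, whereas the interpolation property guarantees an extension all the way to a maximal mapping on all of $\R^d$. If we can extend to a maximal mapping on $\R^d$, we can certainly extend to the smaller set $\X \cup z$ by restriction.

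Concretely, I would proceed as follows. Fix an arbitrary finite $\X \subset \R^d$, an arbitrary $z \in \R^d$, and an arbitrary $F \in \F_p(\X)$; these are the objects over which Definition~\ref{def:pointwise_ext} quantifies. Since $p$ is an interpolation constraint for $\bF_p(\R^d)$, Definition~\ref{def:intrp} applied to this finite set $\X$ and this $F$ yields a maximal mapping $\bar F \in \bF_p(\R^d)$ with $F(x) = \bar F(x)$ for all $x \in \X$. The next step is to produce the candidate extension required by pointwise extensibility. I would take the restriction of $\bar F$ to the finite set $\X \cup z$, call it $\tilde F := \bar F_{|\X \cup z}$.

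It then remains to check the two requirements on $\tilde F$. First, $\tilde F \in \F_p(\X \cup z)$: this follows from the ``trivial'' restriction remark made just before Definition~\ref{def:intrp}, namely that any mapping in $\F_p(\X_2)$ restricts to a mapping in $\F_p(\X_1)$ whenever $\X_1 \subseteq \X_2$. Here $\bar F \in \bF_p(\R^d) \subseteq \F_p(\R^d)$ and $\X \cup z \subseteq \R^d$, so the restriction indeed lies in $\F_p(\X \cup z)$; equivalently, $p^{xy} \geq 0$ continues to hold at every pair of points in $\X \cup z$ because it held at every pair in $\R^d$. Second, $\tilde F$ agrees with $F$ on $\X$: for every $x \in \X$ we have $\tilde F(x) = \bar F(x) = F(x)$, the first equality because $x \in \X \cup z$ so the restriction does not alter the value, and the second by the interpolation property.

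Assembling these, $\tilde F$ witnesses the condition in Definition~\ref{def:pointwise_ext} for the given $\X$, $z$, and $F$; since these were arbitrary, $p$ is pointwise extensible. I do not anticipate a genuine obstacle here, as the statement is a direct specialization of the stronger interpolation hypothesis. The only point requiring mild care is the handling of multi-valued mappings: one must make sure that ``$F(x) = \bar F(x)$'' is read as equality of the full (possibly multi-valued) image sets, so that restricting $\bar F$ to $\X \cup z$ genuinely preserves all the values of $F$ on $\X$; but this is exactly the convention already fixed in Definitions~\ref{def:intrp} and~\ref{def:pointwise_ext}, so no additional work is needed.
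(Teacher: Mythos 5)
Your proof is correct and matches the paper exactly: the paper dispatches this proposition with the one-line remark that it ``holds by definition,'' and your argument --- extend $F$ to a maximal $\bar F \in \bF_p(\R^d)$ via the interpolation property, then restrict $\bar F$ to $\X \cup z$, using the trivial restriction observation stated just before Definition~\ref{def:intrp} --- is precisely the unpacking of that remark. Your closing caution about multi-valued mappings is handled correctly as well, since both definitions read $F(x)=\bar F(x)$ as equality of image sets.
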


Proposition \ref{interptoext} allows deriving counterexamples to the interpolability of a constraint $p$, see Section \ref{sec:counterex} for an illustration. \AR{The converse holds under additional technical conditions. Indeed, pointwise extensibility allows iteratively extending any $F \in\F_p(\X)$ to the rationals, i.e., to $\bar F\in \F_p(\mathbb{Q}^d)$. Then, one can extend $\bar F$ to $\R^d$, taking its limit on $\mathbb{Q}^d$, provided (i) such a limit exists, and (ii) $p$ is continuous, hence still satisfied at the limit. We formalize these requirements by introducing the notion of a regularly extensible constraint, before providing sufficient conditions ensuring regular extensibility of a constraint $p$.}
\begin{definition}[\AR{Regularly extensible constraint}]
~\\
Let $p$ be a pairwise constraint. We say $p$ is \emph{regularly extensible} if 
\begin{enumerate}
    \item $p$ is continuous.
    \item Given a finite set $\X\subset\R^d$, and $F\in\F_p(\X)$, $$\exists \bar F\in \F_p(\mathbb{Q}^d\cup \X): \ F(x)=\bar F(x), \ x\in \X,$$ such that alongside any Cauchy sequence $(x_k)_{k=1,2,\ldots}\subset\mathbb{Q}^d$, there exists a convergent subsequence in $(\bar F(x_k))_{k=1,2,\ldots}$.
\end{enumerate} \label{def:regext}
\end{definition}
\AR{\begin{lemma}\label{lem:reg_ext_example}
    Let $p$ be a continuous, differentially consistent, pointwise extensible, pairwise constraint. If any of the following conditions holds:
\begin{itemize}
    \item Given $\{(x_i,f_i,g_i),(x_i,f_i,g_i)\}\in (\R^d\times \R\times \R^d)^2$, satisfaction of $p^{ij}$ and $p^{ji}$ implies
    \begin{align}\label{eq:reg}
        \|g_i-g_j\|\leq \omega(\|x_i-x_j\|);
    \end{align}
    \item Given $\{(x_i,f_i,g_i),(x_i,f_i,g_i)\}\in (\R^d\times \R\times \R^d)^2$, satisfaction of $p^{ij}$ and $p^{ji}$ implies
    \begin{align}\label{eq:reg1}
        \|g_i\|\leq A\|g_j\|+B, \text{ where $A,B<+\infty$, or;}
    \end{align}
    \item Given any finite $\X \subset\R^d$ and $z\in \R^d$, $ \forall F=(f,g)\in \F_p(\X),$
    \begin{align}\label{eq:reg3}
       &\exists \bar F\in \F_p(\X\ \cup \ z )\nonumber\\\text{ s.t. } &\begin{cases}
            F(x)=\bar F(x)\  \forall x\in \X\\
             \bar F(z)=(\bar{f}(z),\bar{g}(z)) \text{ satisfies } \|\bar g(z)\|\leq \underset{x_i \in \X, \ g_i\in g(x_i)}{\max}\|g_i\|;
        \end{cases} 
    \end{align}
\end{itemize}
then $p$ is regularly extensible.
\end{lemma}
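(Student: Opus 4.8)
The plan is to verify that each of the three listed conditions implies the two requirements of Definition~\ref{def:regext}. Continuity of $p$ is assumed outright, so the entire effort goes into establishing requirement~2: starting from $F\in\F_p(\X)$ on a finite set $\X$, I must produce an extension $\bar F\in\F_p(\mathbb{Q}^d\cup\X)$ agreeing with $F$ on $\X$ and such that along every Cauchy sequence $(x_k)\subset\mathbb{Q}^d$ some subsequence of $(\bar F(x_k))$ converges. The construction of $\bar F$ on $\mathbb{Q}^d$ is common to all three cases: since $\mathbb{Q}^d$ is countable, enumerate its points and add them one at a time using pointwise extensibility (Definition~\ref{def:pointwise_ext}), obtaining a nested chain of extensions whose union is a single $\bar F\in\F_p(\mathbb{Q}^d\cup\X)$. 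So the three conditions only need to be used to secure the subsequential-convergence property. In each case I recall that $\bar F(x_k)=(\bar f(x_k),\bar g_{x_k})$ has a function-value component and a subgradient component, and I must control both.

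For the subgradient component I would argue as follows. Fix a Cauchy sequence $(x_k)\subset\mathbb{Q}^d$; being Cauchy it is bounded. Under condition~\eqref{eq:reg}, applying $p^{ij}$ and $p^{ji}$ to two indices $k,l$ gives $\|\bar g_{x_k}-\bar g_{x_l}\|\leq\omega(\|x_k-x_l\|)$, so $(\bar g_{x_k})$ is itself Cauchy in $\R^d$ (here I use that $\omega$ vanishes as its argument tends to $0$, which follows from its role in the subdifferential definition~\eqref{eq:subdiff_general}); hence it converges, not merely subsequentially. Under condition~\eqref{eq:reg1}, fixing any single reference index gives $\|\bar g_{x_k}\|\leq A\|\bar g_{x_0}\|+B$, a uniform bound, so $(\bar g_{x_k})$ lies in a compact ball and admits a convergent subsequence by Bolzano--Weierstrass. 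Under condition~\eqref{eq:reg3}, I would instead choose the extension $\bar F$ more carefully during the one-point-at-a-time construction: each newly added point is assigned a subgradient whose norm is bounded by the maximum subgradient norm already present, so by induction every $\|\bar g_{x}\|$ for $x\in\mathbb{Q}^d$ is bounded by $M:=\max_{x_i\in\X,\,g_i\in g(x_i)}\|g_i\|$; again compactness yields a convergent subsequence. Thus in all three cases the subgradient component is handled.

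It remains to control the function-value component $\bar f(x_k)$, and this is where I expect the main subtlety. Here differential consistency earns its place: since $p$ is differentially consistent, satisfaction of $p^{kl}$ and $p^{lk}$ yields the two-sided sandwich
\begin{align*}
\langle \bar g_{x_l},x_k-x_l\rangle+\omega(\|x_k-x_l\|)\ \leq\ \bar f(x_k)-\bar f(x_l)\ \leq\ \langle \bar g_{x_k},x_k-x_l\rangle-\omega(\|x_k-x_l\|).
\end{align*}
Along the Cauchy sequence $\|x_k-x_l\|\to 0$, and using the subgradient bounds just established (either Cauchyness, or uniform boundedness along the chosen subsequence), both bracketing quantities tend to $0$; hence $(\bar f(x_k))$ (restricted to the already-chosen subgradient-convergent subsequence) is Cauchy in $\R$ and converges. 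Combining the two components gives convergence of $(\bar F(x_k))$ along a common subsequence, which is exactly requirement~2. The hardest step is this function-value argument: I must make sure the same subsequence works simultaneously for $\bar g$ and $\bar f$, which I handle by first extracting the subgradient-convergent subsequence and only then running the sandwich estimate along it, and I must confirm that $\omega(\|x_k-x_l\|)\to 0$, relying on the regularity of $\omega$ implicit in~\eqref{eq:subdiff_general}. With both components convergent, $p$ is regularly extensible in each of the three cases, completing the proof.
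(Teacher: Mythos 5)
Your proposal is correct and follows essentially the same route as the paper's proof: iterative extension to $\mathbb{Q}^d$ by countability and pointwise extensibility, case-by-case boundedness (or Cauchyness) of the subgradient component under \eqref{eq:reg}, \eqref{eq:reg1}, \eqref{eq:reg3}, and then the differential-consistency sandwich combined with the vanishing of $\omega$ to get Cauchyness of the function values. The only cosmetic differences are that the paper establishes Cauchyness of $\bar f$ along the whole sequence via an explicit $\varepsilon$--$\delta$ argument (rather than along the subgradient-convergent subsequence, as you do, which is equally valid since Definition~\ref{def:regext} only demands a convergent subsequence), and under \eqref{eq:reg} the paper only notes boundedness of $\bar g$ where you observe full convergence.
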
}
\begin{proof}
    Given $F\in\F_p(\X)$, we construct an extension $\bar F=(\bar f,\bar g)\in \F_p(\mathbb{Q}^d\cup\X)$, such that alongside any Cauchy sequence $(x_k)_{k=1,2,\ldots}\subset\mathbb{Q}^d$, (i) $(\bar g(x_k))_{k=1,2,\ldots}$ is bounded, hence contains a convergent subsequence \cite[3.4.8]{bartle2000introduction}, and (ii) $(\bar f(x_k))_{k=1,2,\ldots}$ is a Cauchy sequence, hence a convergent sequence \cite[3.5.5]{bartle2000introduction}.
    
    Suppose first \eqref{eq:reg} or \eqref{eq:reg1} is satisfied. By countability of $\mathbb{Q}^d$ and pointwise extensibility of $p$, $F$ can be iteratively extended to $\mathbb{Q}^d\cup\X$ in a way as to satisfy $p$. Denote by $\bar F\in \F_p(\mathbb{Q}^d\cup\X)$ this extension of $F$, and consider any Cauchy sequence $(x_k)_{k=1,2,\ldots}\subset\mathbb{Q}^d$. Suppose \eqref{eq:reg} is satisfied. Alongside $(x_k)_{k=1,2,\ldots}$, since Lipschitz continuity preserves Cauchy sequences, see, e.g., \cite[Theorem 5.4.7]{bartle2000introduction}, $\bar g(x_k)$ is a Cauchy sequence, hence bounded \cite[3.4.8]{bartle2000introduction}. On the other hand, if \eqref{eq:reg1} is satisfied, then
    \begin{align*}
        \|\bar g(x_k)\|\leq \min_{x_i\in \X, \ g_i\in g(x_i)}A\|g_j\|+B<+\infty, \ \forall k=1,2,\ldots,
    \end{align*}
    hence $(\bar g(x_k))_{k=1,2,\ldots}$ is bounded. Suppose now $p$ satisfies \eqref{eq:reg3}. By countability of $\mathbb{Q}^d$ and pointwise extensibility of $p$, $F$ can be iteratively extended to $\mathbb{Q}^d\cup\X$ in a way as to satisfy $p$, and $\bar g(x)\leq \max_{x_i\in \X, \ g_i\in g(x_i)}\|g_i\|$, at all $x\in \mathbb{Q}^d$. Denote by $\bar F\in \F_p(\mathbb{Q}^d\cup\X)$ this extension of $F$, and consider any Cauchy sequence $(x_k)_{k=1,2,\ldots}\subset\mathbb{Q}^d$. Alongside this sequence, \begin{align*}
        \|\bar g(x_k)\|\leq \max_{x_i\in \X, \ g_i\in g(x_i)}\|g_i\| \ \forall k=1,2,\ldots,
    \end{align*}
    hence $(\bar g(x_k))_{k=1,2,\ldots}$ is bounded. In all cases, denote by $M<+\infty$ the bound on $(\bar g(x_k))_{k=1,2,\ldots}$.

    In addition, since $p$ is differentially consistent, satisfaction of $p$ at any pair $(y_i,y_j) \in \R^d$ implies satisfaction of \eqref{eq:subdiff_gx} at $(y_i,y_j)$ and $(y_j,y_i)$, i.e., 
\begin{align*}
     \langle\bar g(y_j),y_i-y_j\rangle+\omega(\|y_i-y_j\|)\leq \bar f(y_i)-\bar f(y_j)&\leq \langle\bar g(y_i), y_i-y_j\rangle -\omega(\|y_i-y_j\|)\\
     -\| \bar g(y_j)\| \|y_i-y_j\|+\omega(\|y_i-y_j\|)\leq \bar f(y_i)-\bar f(y_j)&\leq \|\bar g(y_i)\|\|y_i-y_j\| -\omega(\|y_i-y_j\|)\\
    \Leftrightarrow  |\bar f(y_i)-\bar f(y_j)|&\leq M\|y_i-y_j\| +|\omega(\|y_i-y_j\|)|.
\end{align*}
By definition of $\omega(\cdot)$, $\forall \varepsilon>0$, $\exists \delta_1\geq 0: |t|\leq \delta_1 \Rightarrow |\omega(|t|)|\leq \varepsilon |t|$. Hence, for any $\varepsilon>0$, let $\delta=\min\{\frac{\varepsilon}{M+\varepsilon},\delta_1\}$. Then,
\begin{align*}
    \text{If }\|y_i-y_j\|\leq \delta, \text{ then } |\bar f(y_i)-\bar f(y_j)|\underset{\delta \leq \delta_1}{\leq} (M+\varepsilon)\delta \underset{\delta \leq \frac{\varepsilon}{M+\varepsilon}}{\leq } \varepsilon. 
\end{align*}
Hence, since $(y_j)_{j=1,2,\cdots}$ is a Cauchy sequence, so is $(\bar f(y_j))_{i=1,2,\cdots}$.
\end{proof}
For instance, by Example \ref{ex:convex_pointwise}, the characterization of convex functions given in \eqref{eq:conv} is regularly extensible. In addition, any continuous differentially consistent pointwise extensible constraint ensuring some continuity on subgradients, e.g., Lipschitz continuity or boundedness, is regularly extensible. We now show that any regular pointwise extensible constraint is an interpolation constraint.
\begin{theorem}\label{theoequi}
Let $p$ be a pairwise constraint. \AR{If $p$ is a regular pointwise extensible constraint, then $p$ is an interpolation constraint for $\bF_p(\R^d)$.}
\end{theorem}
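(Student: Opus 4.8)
The plan is to produce the required global maximal function in three stages: extend the given finite data on $\X$ to a map on the rationals $\mathbb{Q}^d$, then pass from $\mathbb{Q}^d$ to all of $\R^d$ by a limiting argument, and finally enlarge the result to a maximal element of $\F_p(\R^d)$. Throughout, the two defining properties of regular extensibility (Definition~\ref{def:regext}) do the work: continuity of $p$ lets me pass constraints to limits, and the Cauchy-subsequence property guarantees those limits exist.

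First, the step $\X \to \mathbb{Q}^d$ is handed to me essentially for free: given finite $\X$ and $F \in \F_p(\X)$, Definition~\ref{def:regext}(2) furnishes $\bar F \in \F_p(\mathbb{Q}^d \cup \X)$ with $\bar F(x) = F(x)$ for all $x \in \X$, and such that along any Cauchy sequence $(y_k) \subset \mathbb{Q}^d$ the sequence $(\bar F(y_k))$ has a convergent subsequence. I will also reuse the computation in the proof of Lemma~\ref{lem:reg_ext_example}: differential consistency forces the single-valued component $\bar f$ to preserve Cauchy sequences, so $\bar f$ has a genuine limit along every convergent rational sequence, independent of the sequence chosen (interleave two sequences converging to the same point into one Cauchy sequence).

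The heart of the argument is the passage $\mathbb{Q}^d \to \R^d$. For $y \in \R^d \setminus (\mathbb{Q}^d \cup \X)$ I fix a rational sequence $y_k \to y$ and set $\tilde f(y) := \lim_k \bar f(y_k)$, which the previous paragraph shows to be well defined. Since the value is single-valued but the subgradient need not be, I define $\tilde g(y)$ as the \emph{set} of all subsequential limits $\lim_j \bar g(y_{k_j})$ taken over all rational sequences $y_k \to y$; this set is nonempty by the Cauchy-subsequence property, giving $\tilde F(y) = \{(\tilde f(y), a) : a \in \tilde g(y)\}$. On $\mathbb{Q}^d \cup \X$ I keep $\tilde F = \bar F$, so the values on $\X$ are never altered. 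To check $\tilde F \in \F_p(\R^d)$, I verify $p \geq 0$ at an arbitrary pair $(y,y')$ with selected values $(\tilde f(y),a)$ and $(\tilde f(y'),b)$: picking rational sequences realizing $a$ and $b$, I have $p^{y_k y'_m} \geq 0$ for all rational indices, and by continuity of $p$ I pass to the limit in the two arguments successively (iterated limits) to obtain $p \geq 0$ at $(y,y')$; pairs involving rational or $\X$-points are the special case of constant sequences.

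Finally, because $p$ is \emph{pairwise}, the union of any chain in $\F_p(\R^d)$ lies in $\F_p(\R^d)$, so Zorn's lemma applied to the maps whose graph contains $\Gr(\tilde F)$ yields $\hat F \in \bF_p(\R^d)$ with $\Gr(\tilde F) \subseteq \Gr(\hat F)$, hence $F(x) \subseteq \hat F(x)$ on $\X$. The step I expect to fight hardest with is upgrading this inclusion to the exact equality $\hat F(x) = F(x)$ demanded by Definition~\ref{def:intrp}: a priori, maximalization could append a spurious subgradient at some $x_0 \in \X$. My plan is to run Zorn inside the subfamily that fixes the $\X$-values (unions of chains preserve both the constraint and those values) and then argue its maximal element is already globally maximal; the only residual possibility, namely that a new subgradient can be added precisely at a point of $\X$, is exactly the gap between this notion and the classical ($g_i \in \partial f(x_i)$) interpolation notion, which I would close using differential consistency of $p$ — or, failing that, record that under the inclusion reading the statement already holds.
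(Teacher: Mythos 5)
Your three-stage architecture --- extend to $\mathbb{Q}^d$ via Definition~\ref{def:regext}(2), pass to $\R^d$ by subsequential limits and continuity of $p$ (with the iterated-limit argument you describe), then Zorn's lemma on graph inclusion --- is exactly the paper's proof. One step, however, overreaches the hypotheses: your well-definedness argument for $\tilde f(y)$ (interleaving two rational sequences and using Cauchy preservation of $\bar f$) borrows differential consistency from the proof of Lemma~\ref{lem:reg_ext_example}, but differential consistency is \emph{not} assumed in Theorem~\ref{theoequi}; under bare regular extensibility, Definition~\ref{def:regext} guarantees only a convergent subsequence of $(\bar F(y_k))$, so the full limit $\lim_k \bar f(y_k)$ need not exist and $(\bar f(y_k))$ may have several limit points. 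The paper sidesteps this entirely: for each $x \in \R^d\setminus\mathbb{Q}^d$ it fixes one rational sequence, extracts one subsequence along which both components converge, and simply \emph{selects} that limit pair as $\bar F(x)$; no canonical or sequence-independent value is needed, since continuity of $p$ certifies the constraint at whatever limit is chosen. Replacing your ``set of all subsequential limits over all sequences'' by this single selection repairs the step without any extra hypothesis.

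On the final stage, your worry about equality versus inclusion at points of $\X$ is perceptive, but note that the paper's own proof delivers only inclusion: its family $\mathcal{P}=\{H\in\F_p(\R^d) :\ \Gr\bar F\subseteq \Gr H\}$ is upward closed, so its Zorn-maximal element is automatically maximal in all of $\F_p(\R^d)$ (any proper extension would again lie in $\mathcal{P}$) --- which is what secures membership in $\bF_p(\R^d)$ --- yet the conclusion is $\Gr \bar F \subseteq \Gr h$, not pointwise equality on $\X$. Your alternative (Zorn within the subfamily fixing the $\X$-values) trades the other way and, as you suspect, cannot recover global maximality. In fact literal equality is unattainable in general: for the convex constraint \eqref{eq:conv} in $d=1$ with data $(-1,1,-1)$, $(0,0,0)$, $(1,1,1)$, every admissible extension satisfies $f(x)=|x|$ on $[-1,1]$, and any maximal mapping must carry the whole interval $[-1,1]$ of subgradients at $0$, strictly containing the datum $\{0\}$; so under the literal reading even \eqref{eq:conv} would fail Definition~\ref{def:intrp}. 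The inclusion reading is therefore the intended one, your closing hedge is precisely what the paper proves, and the upward-closed $\mathcal{P}$ is the right vehicle for it.
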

\begin{proof}
Let $N \in \N$, $\X=\{x_i\}_{i\in [N]}$ $\subset \R^d$ and $F=(f,g)\in \F_p(\X)$. We construct $\bar F \in \F_p(\R^d)$ satisfying $F(x_i)=\bar F(x_i), \ \forall i \in [N]$. 

For every $x \in \X$, we set $\bar F(x)=F(x)$. Then, we extend $\bar F$ to the rationals as follows. By regular extensibility of $p$, there exists a mapping $\bar F=(\bar f,\bar g) \in \F_p(\mathbb{Q}^d \cup \X)$, satisfying $F(x_i)=\bar F(x_i), \ \forall i \in [N]$, and containing convergent subsequences along any Cauchy sequence in $\mathbb{Q}^d$. For each remaining $x \in \R^d \setminus\mathbb{Q}^d$, we first define a value for $\bar F(x)$ and then establish that this value still satisfies $p$. Since $\mathbb{Q}^d$ is dense in $\mathbb{R}^d$, there exists a sequence of rationals $(y_j)_{j=1,2,\ldots}$ converging to $x$. Alongside this sequence, consider $\bar F(y_j)=(\bar f(y_j),\bar g(y_j))$. By regular extensibility of $p$, there exists subsequences of $\bar f(y_j),\ \bar g(y_j)$ converging to some $\mathbf{\bar f}$, $\mathbf{\bar g}$, respectively. Set $ \bar F(x)=(\mathbf{\bar f},\mathbf{\bar g})$, i.e. to the limits of $\bar F(y_j)$ alongside the sequence $y_j$ converging to $x$. By continuity of $p$, $\bar F(x)$ extends $p$ to $x$. Hence, $F$ can be extended to some $\bar F(x)\in\F_p(\R^d)$. It remains to show that $\bar F(x)$ admits a \emph{maximal} extension to $\bF_p(\R^d)$.

Let $\mathcal{P}=\{H \in \F_p(\R^d)\big| \ \Gr\bar F\subseteq\Gr H\},
   $ which is non-empty, along with the partial order on $\mathcal{P}$  \vspace{-2.7mm} \begin{align*}(H_1\preceq H_2) \Leftrightarrow \Gr H_1 \subset \Gr H_2 , \ \forall H_1,H_2\in \mathcal{P}\end{align*}
   Given any chain $\mathcal{C}$ in $\mathcal{P}$, let $\bar H$ be defined as \begin{align*}
       \Gr \bar H=\cup_{H\in \mathcal{C}}\ \Gr H.
   \end{align*}
   Then $\bar H \in \mathcal{P}$, and $\bar H$ is an upper bound for $\mathcal{C}$. Hence, Zorn's lemma guarantees the existence of a maximal element $h \in \mathcal P$, which extends $\bar F$ by definition of $\mathcal{P}$, and cannot be extended since maximal in $\mathcal{P}$.
\end{proof}
\AR{\begin{remark}
    While we restrict ourselves here to function classes defined by pairwise constraints involving evaluations of a single function and its subgradient, all concepts presented in this section can be straightforwardly extended to broader settings. These include, e.g., constraints involving more than two points, constraints involving sums or differences of functions, constraints involving higher-order derivatives, or constraints characterizing operator classes.
\end{remark}
}

We now demonstrate how pointwise extensibility can be used to automatically compute counterexamples to the interpolability of given constraints, and to verify the interpolability of specific ones. We then illustrate the constraint-based approach introduced in Section~\ref{sec:interp} by exhaustively analyzing a class of constraints.

\section{A numerical program to generate counterexamples}\label{sec:counterex} 
By Proposition~\ref{interptoext}, to obtain a counterexample to the interpolability of a constraint $p$, and thereby discard it as a discrete characterization of the class it defines, it suffices to exhibit a dataset $S = \{(x_i, f_i, g_i)\}_{i \in [N]}$ satisfying $p$, together with a point $x \in \R^d$ at which $p$ cannot be extended. Given $S$ and $x$, verifying whether such an extension is possible reduces to solving an optimization problem.

\begin{lemma}
    Given $N\in \N$, consider a pairwise constraint $p$, a set $S = \{(x_i, f_i, g_i)\}_{i \in [N]}$ satisfying $p$, and $x \in \R^d$.  Let
    \begin{align}
        \label{eq:innerprob}
        \tau_\star:=\min_{\substack{\tau \in \R,\\ f_x \in \R,\ g_x \in \R^d}} \tau \quad \text{s.t.} \quad p^{xi} \geq -\tau \text{ and } p^{ix} \geq -\tau, \quad \forall i \in [N].
    \end{align}
    Then, $S$ admits an extension satisfying $p$ at $x$ if and only if $\tau_\star \leq 0$.
    \label{lem:exti}
\end{lemma}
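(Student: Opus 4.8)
The plan is to reduce the existence of an extension to a pure feasibility question and then read that feasibility off from the sign of the optimal value $\tau_\star$. Since the dataset $S$ is assumed to already satisfy $p$, i.e.\ $p^{ij}\geq 0$ for all $i,j\in[N]$, adjoining a new triplet $(x,f_x,g_x)$ automatically preserves satisfaction of $p$ on the pairs internal to $S$; the only conditions that must additionally hold are the cross conditions $p^{xi}\geq 0$ and $p^{ix}\geq 0$ for every $i\in[N]$. Thus ``$S$ admits an extension satisfying $p$ at $x$'' is, by definition, the feasibility of the system $\{p^{xi}\geq 0,\ p^{ix}\geq 0 : i\in[N]\}$ in the unknowns $(f_x,g_x)$. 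The auxiliary variable $\tau$ in \eqref{eq:innerprob} plays exactly the role of a uniform slack measuring, over all $2(N+1)$ cross conditions, how far a candidate $(f_x,g_x)$ is from feasibility: a pair $(f_x,g_x)$ satisfies every cross condition precisely when $(\tau,f_x,g_x)=(0,f_x,g_x)$ is feasible for \eqref{eq:innerprob}.

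From this observation both implications follow quickly. For the ``only if'' direction, suppose an extension $(f_x,g_x)$ exists; then $p^{xi}\geq 0=-0$ and $p^{ix}\geq 0=-0$ for all $i$, so $(0,f_x,g_x)$ is feasible for \eqref{eq:innerprob}, and hence $\tau_\star\leq 0$. For the ``if'' direction, let $(\tau_\star,f_x^\star,g_x^\star)$ attain the minimum in \eqref{eq:innerprob}; since $\tau_\star\leq 0$ we have $-\tau_\star\geq 0$, whence $p^{xi}\geq -\tau_\star\geq 0$ and $p^{ix}\geq -\tau_\star\geq 0$ for all $i\in[N]$, so $(f_x^\star,g_x^\star)$ is a valid extension of $S$ at $x$. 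The structural fact underlying the equivalence is the monotonicity of \eqref{eq:innerprob} in $\tau$: enlarging $\tau$ only relaxes the constraints $p^{xi}\geq-\tau$, so the set of attainable values of $\tau$ is an upward ray with infimum $\tau_\star$, and $0$ lies in it — equivalently an extension exists — exactly when $\tau_\star\leq 0$.

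The main obstacle I anticipate is the attainment of the minimum in \eqref{eq:innerprob}, which is what licenses the ``if'' direction at the threshold $\tau_\star=0$: were the infimum not attained there, every feasible triplet would have $\tau>0$ and no extension would exist despite $\tau_\star=0$. Writing the problem as a genuine $\min$ presupposes this attainment; to justify it rigorously one must argue that the infimum is achieved, e.g.\ by continuity of $p$ together with a coercivity or boundedness property of the sublevel sets in $(f_x,g_x)$, which is precisely where a regularity assumption on $p$ (as in Lemma~\ref{lem:reg_ext_example}) would enter. I would either invoke attainment directly or, if it is preferable to avoid such an assumption, restate the equivalence with $\tau_\star$ read as an infimum so that the boundary case $\tau_\star=0$ is handled explicitly.

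A secondary point to address is the self-pair $p^{xx}$, which does not appear among the constraints of \eqref{eq:innerprob} but is implicitly required for the enlarged dataset $S\cup\{(x,f_x,g_x)\}$ to satisfy $p$. For the differentially consistent constraints of interest this is harmless, since $p^{xx}\geq 0$ holds automatically (for instance $p^{xx}=0$ for the convexity constraint \eqref{eq:conv}); I would note this so that omitting it from \eqref{eq:innerprob} imposes no loss, leaving the cross conditions as the only effective constraints.
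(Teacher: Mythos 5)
Your proof is correct and takes essentially the same approach as the paper's, which likewise reduces extensibility at $x$ to feasibility of the cross constraints $p^{xi}\geq 0$, $p^{ix}\geq 0$ and reads this off from the sign of the optimal slack $\tau_\star$ in \eqref{eq:innerprob}. Your two caveats --- attainment of the minimum at the threshold $\tau_\star=0$, and the self-pair $p^{xx}$ missing from \eqref{eq:innerprob} --- are genuine subtleties that the paper's one-line proof silently glosses over (the latter is implicitly benign for the constraints the paper considers, e.g.\ those of Section~\ref{sec:gram} where $p^{xx}=0$ is assumed), so your treatment is the same argument made more careful rather than a different route.
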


\begin{proof}
    The constraint $p$ can be extended at $x$ if there exists a pair $(f_x, g_x)$ such that the extended dataset $\{(x_i, f_i, g_i)\}_{i \in [N]} \cup \{(x, f_x, g_x)\}$ satisfies $p$, i.e., if
$\exists (f_x, g_x) \text{ s.t. }$ $ p^{xi} \geq 0 \text{ and } p^{ix} \geq 0, \quad \forall i \in [N].$ Problem~\eqref{eq:innerprob} computes the smallest relaxation $\tau$ needed to satisfy all these inequalities. If $\tau_\star \leq 0$, an extension exists.
\end{proof}

Whenever $\tau_\star$, the solution to~\eqref{eq:innerprob}, is positive, $S$ and $x$ form a counterexample to the interpolability of $p$. Thus, for a given $N \in \N$, one can search for such a counterexample by identifying the "hardest" $x$ and $S$, i.e., those maximizing $\tau_\star$.

\begin{proposition}
    Given $N\in \N$, consider a pairwise constraint $p$, and let
  \begin{equation}\label{prob:counterex}
     \begin{aligned}
   & \textcolor{white}{blablablabla}\tau_\star(N):= && \max_{\substack{x \in \R^d\\ S=\{(x_i,f_i,g_i)\}_{i\in [N]}}}\min_{\substack{\tau\in \R,\\ f_x \in \R,\ g_x\in \R^d}} \tau \\
     &\text{Extension of $p$ at $x$}&&\text{ s.t. } p^{xi}\geq -\tau\text{ and } p^{ix}\geq -\tau, \ \forall i \in [N],  \\
     &\text{Satisfaction of $p$ by $S$}&&\quad \quad p^{ij}\geq 0, \ \forall i ,j\in [N].
\end{aligned}
\end{equation}
    Then, $p$ is a pointwise extensible constraint if and only if $\tau_\star(N) \leq 0$, $\forall N \in \N$.
\end{proposition}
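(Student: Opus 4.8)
The plan is to reduce the statement to Lemma~\ref{lem:exti} and then merely unfold Definition~\ref{def:pointwise_ext}; the entire content is the bridge between the optimization value $\tau_\star(N)$ and the combinatorial extension property. Writing $\tau_\star(x,S)$ for the value of the inner minimization in~\eqref{eq:innerprob} at a fixed admissible pair $(x,S)$, we have by construction $\tau_\star(N)=\sup_{x,S}\tau_\star(x,S)$, the supremum ranging over all $x\in\R^d$ and all size-$N$ datasets $S$ satisfying $p$. Hence $\tau_\star(N)\le 0$ is equivalent to $\tau_\star(x,S)\le 0$ for every such pair, which, by Lemma~\ref{lem:exti}, is in turn equivalent to: every size-$N$ dataset satisfying $p$ can be extended so as to satisfy $p$ at every $x\in\R^d$. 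The proposition thus amounts to showing that pointwise extensibility coincides with ``every finite dataset satisfying $p$ extends at every point,'' and I would prove the two implications by passing between a mapping $F\in\F_p(\X)$ and the dataset $S$ it induces.

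For the forward direction, I would assume $p$ pointwise extensible and fix any $N$, any $x\in\R^d$, and any size-$N$ dataset $S=\{(x_i,f_i,g_i)\}_{i\in[N]}$ satisfying $p$. Setting $\X=\{x_i\}_{i\in[N]}$ and letting $F\in\F_p(\X)$ be the mapping carrying the values of $S$, Definition~\ref{def:pointwise_ext} applied with $z=x$ yields $\bar F\in\F_p(\X\cup x)$ agreeing with $F$ on $\X$. The value $\bar F(x)$ then provides an extension of $S$ at $x$, so Lemma~\ref{lem:exti} gives $\tau_\star(x,S)\le 0$; taking the supremum over $(x,S)$ yields $\tau_\star(N)\le 0$, for every $N$.

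For the converse, I would assume $\tau_\star(N)\le 0$ for all $N$ and fix a finite $\X\subset\R^d$, a point $z\in\R^d$, and $F\in\F_p(\X)$. Collecting the values of $F$ into a dataset $S$ of some size $N$, the hypothesis $F\in\F_p(\X)$ gives $p^{ij}\ge 0$ for all $i,j$, i.e.\ $S$ satisfies $p$. Since $\tau_\star(N)\le 0$ forces $\tau_\star(z,S)\le 0$, Lemma~\ref{lem:exti} produces a pair $(f_z,g_z)$ such that $S\cup\{(z,f_z,g_z)\}$ satisfies $p$. Defining $\bar F$ to equal $F$ on $\X$ and $(f_z,g_z)$ at $z$ gives $\bar F\in\F_p(\X\cup z)$ agreeing with $F$ on $\X$, which is precisely Definition~\ref{def:pointwise_ext}. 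If $z\in\X$ one may simply reuse an existing value, so no generality is lost there.

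The only genuinely delicate point, and the step I would be most careful about, is the translation between a mapping $F$ and its associated dataset $S$. Since $F$ has single-valued $f$ but possibly multi-valued $g$, when $F$ is multi-valued the induced $S$ need not be finite, so the finite-$N$ formulation of~\eqref{prob:counterex} applies only after restricting to a finite selection of subgradients; one must also check that any repeated points $x_i=x_j$ carry consistent values and that feasibility of~\eqref{eq:innerprob} with $\tau=0$ is literally the extension condition. None of this requires new ideas, but it is where the ``for all $N$'' quantifier and the mapping/dataset correspondence must be matched precisely, and I would state the equivalence for finite datasets while noting that this is exactly the content of pointwise extensibility in Definition~\ref{def:pointwise_ext}.
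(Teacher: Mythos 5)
Your proposal is correct and follows essentially the same route as the paper, whose entire proof is a two-sentence reduction of both directions to Lemma~\ref{lem:exti}; you merely spell out the dataset/mapping translation and the unfolding of Definition~\ref{def:pointwise_ext} that the paper leaves implicit. The multi-valued subtlety you flag is real but is glossed over by the paper as well (its compact notation already treats $S$ as a finite list of triplets, possibly with repeated $x_i$), so your added care only strengthens the argument.
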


\begin{proof}
Let $\tau_\star(N)>0$ for some $N\in \N$. By Lemma \ref{lem:exti}, there exists a counterexample to the pointwise extensibility of $p$. The second direction holds by Lemma \ref{lem:exti}.
\end{proof}

\AR{In general, \eqref{prob:counterex} is not directly solvable due to its non-convexity and the combinatorial nature of the search over datasets. To address this, we use a heuristic that iteratively constructs a dataset maximizing the solution to~\eqref{eq:innerprob}; see the code referenced in Section~\ref{sec:conclu} for implementation details.} While this heuristic does not provide guarantees of interpolability in the absence of a counterexample, it effectively obtained such counterexamples for several classically used constraints, as illustrated in Table~\ref{tab:counterex}.

\begin{table}[ht!]
\renewcommand{\arraystretch}{1.2}
\centering
\begin{tabular}{@{}p{2.7cm} p{5.8cm} p{6.5cm}@{}}
\toprule
\textbf{Function class} & \textbf{Classical characterization} & \textbf{Counterexample} \\
\midrule
$\begin{array}{l}
\textbf{Weakly convex}\\\textbf{
functions}\end{array}$ & 
$\begin{array}{l}
\text{For some } B,\mu \geq 0,\ \forall x,y \in \mathbb{R}^d, \\
\forall g_y \in \partial f(y)  \cite{davis2019stochastic,das2024branch}: \\
\begin{aligned}
\begin{cases}
    &   f(x) \geq f(y) + \langle g_y, x - y \rangle - \frac{\mu}{2} \|x - y\|^2, \\
&\|g_y\| \leq B
\end{cases}
\end{aligned}
\end{array}$ & 
$\begin{array}{l}
S = \left\{ 
\begin{pmatrix} 0\\0\\B \end{pmatrix},
\begin{pmatrix} \frac{2B}{\mu}+1\\ \frac{\mu}{2}-2B \\ B \end{pmatrix}
\right\}, \\
x = \frac{2B}{\mu}
\end{array}$ \\

\midrule
$\begin{array}{l}
\textbf{Uniformly convex}\\\textbf{
functions}\end{array}$  &
$\begin{array}{l}
\text{For some } p > 2,\ \forall x, y \in \mathbb{R}^d \cite{doikov2021minimizing}: \\
f(x) \geq f(y) + \langle \nabla f(y), x - y \rangle + \frac{\mu}{p} \|x - y\|^p
\end{array}$&
\AR{$\begin{array}{l}
S = \left\{
\begin{pmatrix} 0\\0\\0 \end{pmatrix},
\begin{pmatrix} 2\\ \frac{\mu 2^p}{p}\\ \frac{\mu 2^p}{p} \end{pmatrix},
\begin{pmatrix} 1\\ \frac{\mu(2^{p-1} - 1)}{p}\\ \frac{\mu 2^{p-1}}{p} \end{pmatrix}
\right\}, \\
x = 0.1
\end{array}$} \\
\midrule
$\begin{array}{l}
\textbf{Holder smooth}\\\textbf{
functions}\end{array}$  &
$\begin{array}{l}
\text{For some } \alpha \in (0,1),\ \forall x, y \in \mathbb{R}^d \cite{doikov2021minimizing}: \\
f(x) \leq f(y) + \langle \nabla f(y), x - y \rangle + \frac{L}{\alpha+1} \|x - y\|^p
\end{array}$ &
\AR{$\begin{array}{l}
S = \left\{
\begin{pmatrix} 0\\0\\0 \end{pmatrix},
\begin{pmatrix} 2\\  L(1+\frac{1}{\alpha})^\alpha\\ L(1+\frac{1}{\alpha})^\alpha\end{pmatrix},
\begin{pmatrix} 1\\ \frac L2 (1+\frac{1}{\alpha})^\alpha-\frac{L}{\alpha+1}\\\frac L 2(1+\frac{1}{\alpha})^\alpha\end{pmatrix}
\right\}, \\
x = 0.5
\end{array}$} \\

\midrule
$\begin{array}{l}
\textbf{Smooth functions}\\\textbf{
with P\L{} condition}\end{array}$ &
$\begin{array}{l}
\text{For some } 0 \leq \mu \leq L,\ \forall x, y \in \mathbb{R}^d \cite{karimi2016linear}: \\
\begin{aligned}
    \begin{cases}
        &f(x) \leq f(y) + \langle \nabla f(y), x - y \rangle + \frac{L}{2} \|x - y\|^2, \\
&\|g(x)\|^2 \geq 2\mu(f(x) - f_\star),\\& f(x) \geq f_\star,\quad g_\star = 0
    \end{cases}
\end{aligned}
\end{array}$ &
$\begin{array}{l}
S = \left\{
\begin{pmatrix} 0\\0\\0 \end{pmatrix},
\begin{pmatrix} 1\\ \frac{L}{2} \\ L \end{pmatrix}
\right\}, \mu = \frac{L}{2}, \\
x = 0.5
\end{array}$ \\

\midrule
$\begin{array}{l}
\textbf{Functions with}\\\textbf{
Lipschitz Hessian}\end{array}$ &
$\begin{array}{l}
\text{For some } M \geq 0,\ \forall x, y \in \mathbb{R}^d \cite{nesterov2018lectures}: \\
|f(x) - f(y) - \langle \nabla f(y), x - y \rangle \\
\quad - \tfrac{1}{2}(x - y)^T \nabla^2 f(y)(x - y)| \leq \tfrac{M}{6} \|x - y\|^3
\end{array}$ &
$\begin{array}{l}
S = \left\{
\begin{pmatrix} 0\\0\\0\\0 \end{pmatrix},
\begin{pmatrix} 1\\ -\frac{M}{6}\\ 0\\ 0 \end{pmatrix}
\right\}, \\
x = 0.5
\end{array}$ \\

\bottomrule
\end{tabular}
\caption{Function classes whose classical constraints are not pointwise extensible, and corresponding counterexamples.}
\label{tab:counterex}
\end{table}
  \begin{remark}
    The bounds on the performance of optimization methods on the function classes presented in Table \ref{tab:counterex}, derived based on their discretized characterizations, are a priori not tight, since they apply to datasets that are not necessarily consistent with functions of these classes. In particular, there exist questions for which one cannot obtain a tight answer based on these discretizations.
\end{remark} 

To the best of our knowledge, no interpolation constraint has yet been obtained for any of these classes. Relying on pointwise extensibility, we obtain interpolation constraints for the class of weakly convex functions with bounded subgradient.
\section{Interpolation constraints for a class of weakly convex functions}\label{sec:weakconv}
\subsection{Motivation}  Relying on interpolation constraints and the PEP framework, a wide range of first-order methods has been tightly analyzed in the smooth (non)-convex setting, see e.g., \cite{rotaru2022tight,thesis,taylor2017smooth,taylor2018exact,taylor2019stochastic,de2017worst,dragomir2021optimal,guille2022gradient,kamri2023worst}. \com{However, despite interest in first-order methods applied to non smooth non convex functions, the lack of interpolation constraints in this setting prevents any a priori guarantee of tightness in the bounds obtained.} 

In particular, in this setting, widely analyzed functions are weakly convex functions, that is, functions convex up to the addition of a quadratic term \cite{nurminskii1973quasigradient}, and including convex and smooth functions, but also any composition $f(x):=h(c(x)),$
 where $h$ is convex and Lipschitz and $c$ is smooth \cite{drusvefficiency}. Weakly convex functions cover various large-scale applications, e.g., robust principal component analysis \cite{candes2011robust}, phase retrieval \cite{davis2020nonsmooth}, minimization of the conditional value-at-risk \cite{ben1986expected}, covariance matrix estimation \cite{chen2015exact} or training deep linear neural networks \cite{hardt2016identity}, while possessing convergence guarantees and a continuous measure of stationarity \cite{burke1985descent,drusvefficiency}. Since weak convexity alone does not guarantee the convergence of common algorithms such as the subgradient method, weakly convex functions are usually analyzed together with an additional assumption. In particular, we treat the case of weakly convex functions with bounded subgradients \cite{bohm2021variable,davis2019stochastic,das2024branch,mai2020convergence}.

\subsection{Weakly convex functions with bounded subgradient}
Weakly convex functions with bounded subgradients are commonly characterized by the following constraint $p_{\FuB}$
  \begin{align} \label{ClassicalFuB}
p_{\FuB}^{x,y}\geq 0 \Leftrightarrow
\begin{cases}
   & f(y)\geq f(x)+\langle g_x,y-x\rangle-\frac{\mu}{2}\|x-y\|^2\\
   &\|g_x\|\leq B,
    \end{cases}  
\end{align} that holds $\forall x,y \in \dom f$, and $\forall g_x\in\partial f(x)$. A computer-aided performance bound for the subgradient method applied to $\bF_{p_{\FuB}}(\R^d)$ was derived in \cite{das2024branch}, using $p_{\FuB}$. However, as illustrated in Section~\ref{sec:counterex}, $p_{\FuB}$ is not an interpolation constraint, hence, the bound obtained in \cite{das2024branch} is a priori not tight. 

\ARj{We now propose a strengthened version of $p_{\FuB}$, motivated by a graphical, one-dimensional intuition. We will then prove that this strengthened condition is an interpolation constraint, and that the class it defines is $\bF_{p_{\FuB}}(\R^d)$.}

\AR{Consider the one-dimensional case with parameters \( d = 1 \), \( \mu = 1 \), and \( B = 1 \), and the dataset \( S = \{(0, 0, 1),\ (3, -1.5, 1)\} \). It is straightforward to verify that this dataset satisfies \( p_{\FuB} \). However, as illustrated in Figure~\ref{fig:counterexample}, \( S \) cannot be interpolated by any weakly convex function with bounded subgradients.}

\begin{figure}[ht!]
    \centering
\includegraphics[width=\textwidth]{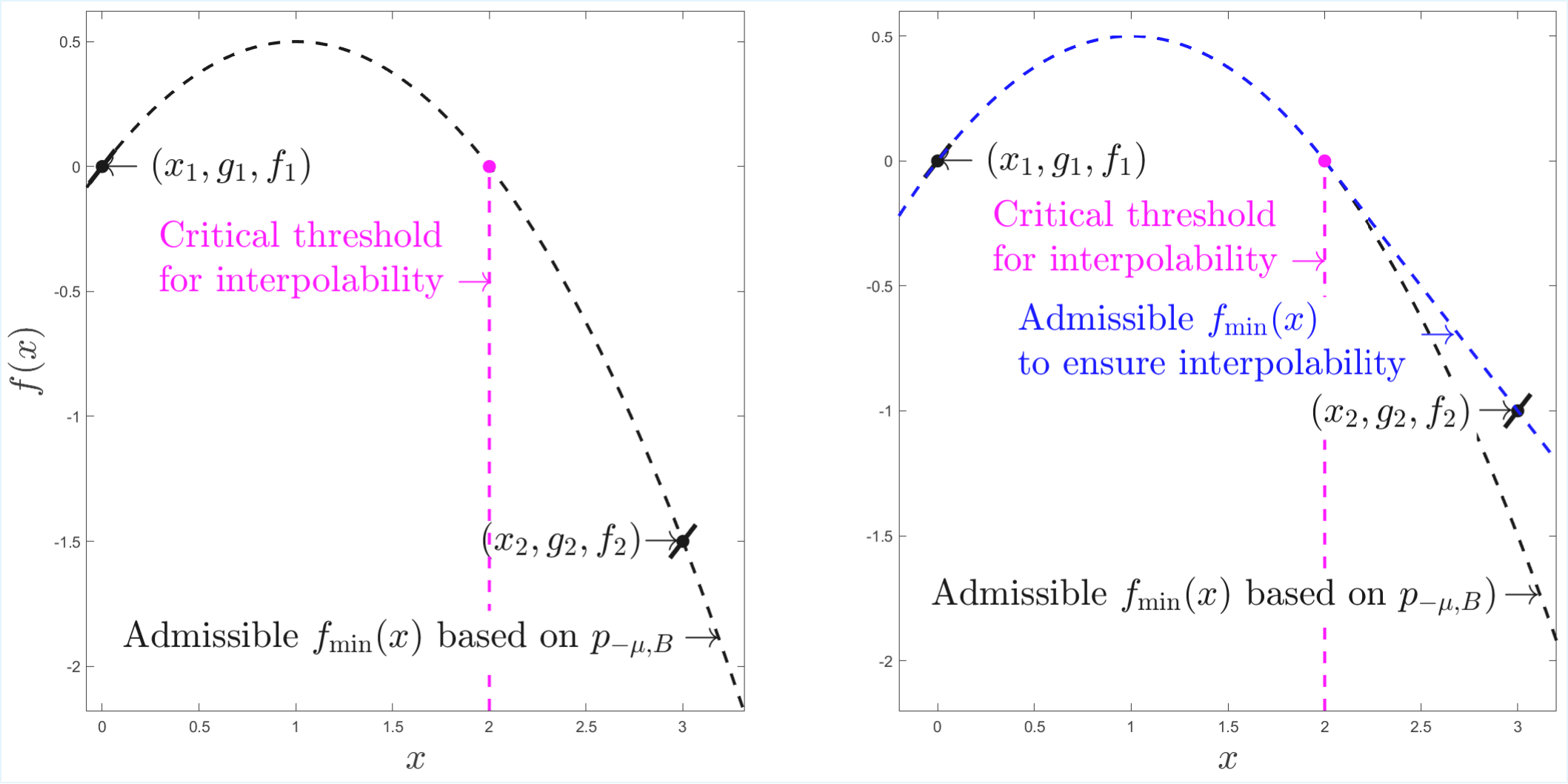}
    \caption{\AR{Counterexample to the interpolability of $p_{\FuB}$. On the left-hand side figure, the set $S=\{(x_i,f_i,g_i)\}_{i=1,2}$ satisfies \eqref{ClassicalFuB}, but cannot be interpolated by a function in $\bF_{p_{\FuB}}(\R^d)$, since the norm of the gradient associated to the only weakly convex function, i.e., whose gradient does not decrease faster than $- \|x\|$, interpolating $S$, $f_{\min}(x)$, grows larger than allowed from the critical threshold $x=2$. On the right-hand side figure, 
    the minimal function value allowed for $f_2$ ensures a bounded subgradient associated to the minimal weakly function interpolating $S$. An interpolation constraint for $\bF_{p_{\FuB}}(\R^d)$ should enforce this minimal value for $f_2$.}}
     \label{fig:counterexample}
\end{figure}

To strengthen $p_{\FuB}$, consider $\tilde p_{\FuB}$.
\begin{align}
\tilde{p}_{\FuB}^{x,y}\geq 0\Leftrightarrow
\begin{cases}
   &f(y)\geq f(x)+\langle g_x, y-x\rangle-\frac{\mu}{2}\|x-y\|^2+\frac{\mu}{2}\|y-C_{xy}\|^2\\
   &\|g_x\|\leq B,
    \end{cases}
    \label{22}
\end{align}
where $C_{xy}$ is the projection of $y$ onto $\mathcal{B}_{\frac{g(x)}{\mu}+x, \frac{B}{\mu}}$, i.e., $$C_{xy}=\underset{\|\mu (z-x)+g(x)\|\leq B}{\text{ argmin }} \|z-y\|.$$\vspace{-0.3cm}
\begin{remark}
   We call $\tilde{p}_{\FuB}$ a strengthening of $p_{\FuB}$ since it is stronger, i.e., satisfied by less triplets, than $p_{\FuB}$. Indeed, $\tilde{p}_{\FuB}$ differs from $p_{\FuB}$ of a positive term only.
\end{remark}

As illustrated on the right-hand side of Figure \ref{fig:counterexample}, $\tilde p_{\FuB}$ strengthens $p_{\FuB}$ by imposing the minimal function value allowed for $f_j$ with respect to some $(x_i,f_i,g_i)$ to always be compatible with a bounded subgradient. Observe that $C_{xy}$, the projection of $y$ onto $\mathcal{B}_{\frac{g(x)}{\mu}+x, \frac{B}{\mu}}$, \com{is the critical treshold for interpolability on Figure \ref{fig:counterexample}}. Whenever two points are sufficiently close, e.g., any $x_2\in (0,2]$ in Figure \ref{fig:counterexample}, then $C_{xy}=y$ and $\tilde p_{\FuB}$ reduces to $p_{\FuB}$. However, once this critical threshold $C_{xy}$ is reached, $\tilde p_{\FuB}$ imposes a larger minimal function value. We illustrate on Figure \ref{fig:wcadmissible} the difference between the admissible regions allowed by $p_{\FuB}$ and $\tilde p_{\FuB}$.
\begin{figure}[ht!]
    \centering
  \includegraphics[width=0.7\textwidth]{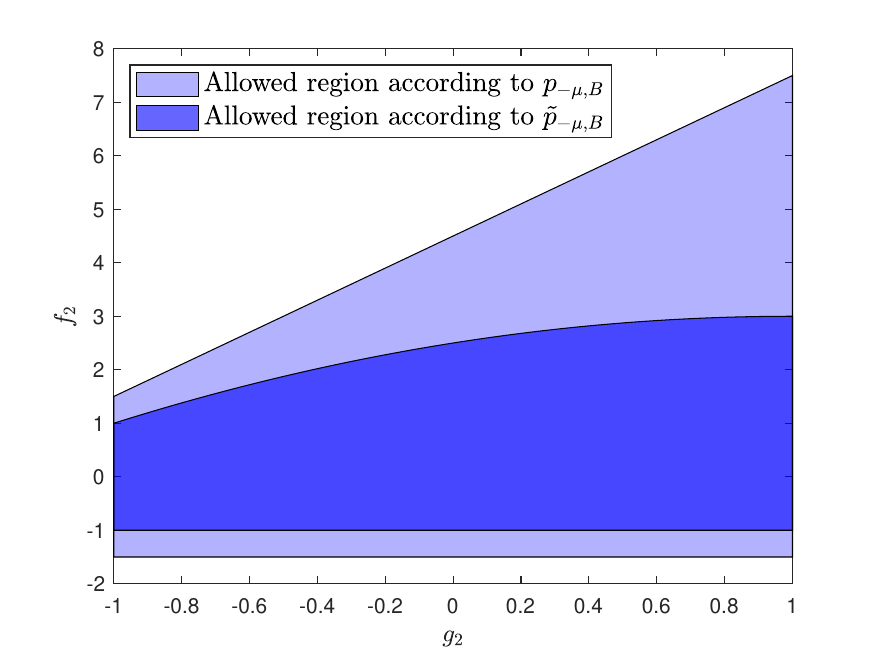}
    \caption{Given $\mu=B=1$, $(x_1,f_1,g_1)=(0,0,1)$, and $x_2=3$, allowed region for $f_2$ as a function of $g_2$, according to $p_{\FuB}$ and $\tilde p_{\FuB}$. Clearly, $p_{\FuB}$ is weaker than $\tilde p_{\FuB}$ since satisfied by triplets that are not $\bF_{p_{\FuB}}(\R^d)$-interpolable. \com{In particular, the maximal value of $f_2$ allowed differs by a factor 4.}} 
\label{fig:wcadmissible}
\end{figure} 

The main result of this section is the following. 
\begin{theorem}\label{thm:interp_WC_bounded_grad}
    Consider $\tilde p_{\FuB}$ as defined in \eqref{22}. Then, $p$ is an interpolation constraint for $\bF_{\tilde{p}_{\FuB}}(\R^d)=\bF_{p_{\FuB}}(\R^d)$, the class of weakly convex functions with bounded subgradients.
\end{theorem}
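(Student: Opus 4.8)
The plan is to invoke Theorem~\ref{theoequi}: it suffices to prove that $\tilde p_{\FuB}$ is a regular pointwise extensible constraint, and separately that $\bF_{\tilde p_{\FuB}}(\R^d)=\bF_{p_{\FuB}}(\R^d)$ coincides with the weakly convex functions with bounded subgradients. Regular extensibility splits into the ingredients of Lemma~\ref{lem:reg_ext_example}. Continuity of $\tilde p_{\FuB}$ follows because $C_{xy}$, a projection onto a ball whose centre and radius depend continuously on $(x,g_x)$, is continuous; differential consistency is immediate, since $\tilde p_{\FuB}$ differs from $p_{\FuB}$ only by the nonnegative term $\tfrac{\mu}{2}\|y-C_{xy}\|^2$ and hence implies the weak-convexity inequality \eqref{eq:subdiff_gx} with $\omega(t)=-\tfrac{\mu}{2}t^2$; and since $\tilde p_{\FuB}$ forces $\|g_x\|\le B$, condition \eqref{eq:reg1} of Lemma~\ref{lem:reg_ext_example} holds with $A=0$. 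Thus everything reduces to establishing \emph{pointwise extensibility}, which I expect to be the main obstacle.

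\textbf{The minimal interpolant.} The key fact I would first establish is that the right-hand side of \eqref{22} is exactly the value of the smallest admissible function. For a triplet $(x_i,f_i,g_i)$ with $\|g_i\|\le B$, set $q_i(y):=f_i+\langle g_i,y-x_i\rangle-\tfrac{\mu}{2}\|y-x_i\|^2$ and define its truncation $\hat q_i(y):=q_i(y)+\tfrac{\mu}{2}\|y-C_{iy}\|^2$, so that $\tilde p_{\FuB}^{\,ij}\ge 0$ reads precisely $f_j\ge \hat q_i(x_j)$ together with $\|g_i\|\le B$. Passing to $\phi_i:=\hat q_i+\tfrac{\mu}{2}\|\cdot\|^2$, a short computation shows $\phi_i$ equals an affine function plus $\tfrac{\mu}{2}\,\mathrm{dist}(\cdot,K_i)^2$ with $K_i=\mathcal{B}_{x_i+g_i/\mu,\,B/\mu}$; since the squared distance to a convex set is convex and $C^1$, this proves $\hat q_i$ is $\mu$-weakly convex, differentiable, with $\nabla\hat q_i(y)=g_i-\mu(C_{iy}-x_i)$ of norm at most $B$ (as $C_{iy}\in K_i$), i.e.\ $\hat q_i$ is itself a $\mu$-weakly convex $B$-Lipschitz function. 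I would then prove the \emph{domination property}: any $\mu$-weakly convex $B$-Lipschitz $F$ with $F(x_i)=f_i$ and $g_i\in\partial F(x_i)$ satisfies $F\ge\hat q_i$ pointwise. Indeed weak convexity gives $F(y)\ge q_i(y)=\hat q_i(y)$ on $K_i$, and for $y\notin K_i$ one bounds $F(y)\ge F(C_{iy})-B\|y-C_{iy}\|\ge q_i(C_{iy})-B\|y-C_{iy}\|=\hat q_i(y)$ using Lipschitzness together with $C_{iy}\in K_i$.

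\textbf{Pointwise extensibility.} Given a finite $S=\{(x_i,f_i,g_i)\}_{i\in[N]}$ satisfying $\tilde p_{\FuB}$ and an arbitrary $z\in\R^d$, I would extend by the max rule, mirroring Example~\ref{ex:convex_pointwise}: set $f_z:=\max_{i\in[N]}\hat q_i(z)$, let $i_\star$ attain the maximum, and set $g_z:=\nabla\hat q_{i_\star}(z)$, which has norm at most $B$. The inequalities $\tilde p_{\FuB}^{\,iz}\ge 0$ hold by construction, since $f_z\ge\hat q_i(z)$ for every $i$ and $\|g_i\|\le B$. For the reverse inequalities $\tilde p_{\FuB}^{\,zi}\ge0$, observe that $\hat q_{i_\star}$ is a $\mu$-weakly convex $B$-Lipschitz function with $\hat q_{i_\star}(z)=f_z$ and $g_z\in\partial\hat q_{i_\star}(z)$; hence the domination property applied at $z$ gives $\hat q_z\le\hat q_{i_\star}$ pointwise, so $\hat q_z(x_i)\le\hat q_{i_\star}(x_i)\le f_i$, the last inequality being $\tilde p_{\FuB}^{\,i_\star i}\ge0$ from $S$. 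This is exactly $\tilde p_{\FuB}^{\,zi}\ge0$, completing pointwise extensibility and, with the first paragraph, showing $\tilde p_{\FuB}$ is an interpolation constraint via Theorem~\ref{theoequi}.

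\textbf{Identifying the class.} It remains to check $\bF_{\tilde p_{\FuB}}(\R^d)=\bF_{p_{\FuB}}(\R^d)$. Since $\tilde p_{\FuB}$ implies $p_{\FuB}$, one has $\F_{\tilde p_{\FuB}}(\R^d)\subseteq\F_{p_{\FuB}}(\R^d)$. Conversely, any $F\in\bF_{p_{\FuB}}(\R^d)$ is, by maximality, a genuine $\mu$-weakly convex function carrying its full subdifferential, all of whose elements have norm at most $B$, hence $B$-Lipschitz; the domination property then yields $F(y)\ge\hat q_x(y)$ for every $x,y$ and every $g_x\in\partial F(x)$, i.e.\ $F\in\F_{\tilde p_{\FuB}}(\R^d)$. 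Combining these two inclusions with the existence of maximal extensions (Zorn's lemma, as in the proof of Theorem~\ref{theoequi}) gives $\bF_{\tilde p_{\FuB}}(\R^d)=\bF_{p_{\FuB}}(\R^d)$, which is by definition the class of weakly convex functions with bounded subgradients. The delicate point throughout is the pointwise-extensibility step, since it requires both the correct closed form $\hat q_i$ for the minimal interpolant and its domination property; once these are in hand, the remaining verifications are routine.
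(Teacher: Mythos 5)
Your proposal is correct, and while its outer architecture coincides with the paper's (the same reduction via Theorem~\ref{theoequi}, the same split into class equivalence plus pointwise extensibility, and even the identical extension formulas: your $f_z=\max_i \hat q_i(z)$ and $g_z=\nabla\hat q_{i_\star}(z)=g_{i_\star}+\mu(x_{i_\star}-C_{i_\star +})$ are exactly the paper's choices in Proposition~\ref{prop:WC_tpUB_is_interp}), the two key verifications are carried out by a genuinely different device. The paper checks the reverse inequalities $\tilde p_{\FuB}^{zi}\geq 0$ by direct algebraic expansion, using Lemma~\ref{leminterm} and the obtuse-angle projection inequality $\langle x-P_x,P_x-z\rangle\geq 0$, and — much more laboriously — proves $\bF_{p_{\FuB}}(\R^d)\subseteq\bF_{\tilde p_{\FuB}}(\R^d)$ (Proposition~\ref{prop:WC_pUB_equiv_tpUB}) by contradiction, via the recursively defined sequences $\beta_k$ and $z_k$ marching toward the critical point $C_{xy}$. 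You replace both steps by a single structural lemma: the closed-form minimal interpolant $\hat q_i$ (affine plus $\tfrac{\mu}{2}\mathrm{dist}(\cdot,K_i)^2$ minus $\tfrac{\mu}{2}\|\cdot\|^2$, which is $\mu$-weakly convex, $C^1$, with $\|\nabla\hat q_i\|\leq B$) together with the domination property that any $\mu$-weakly convex $B$-Lipschitz interpolant of $(x_i,f_i,g_i)$ lies above $\hat q_i$. This is more conceptual and makes the paper's hardest proof essentially disappear; what the paper's route buys in exchange is that it never needs to construct or analyze a global one-point interpolant, working purely at the level of pairwise inequalities. Three small points you should make explicit to be fully rigorous: (i) the crux identity $\hat q_i(y)=q_i(C_{iy})-B\|y-C_{iy}\|$ for $y\notin K_i$, which you use silently in the domination step, is true but deserves its one-line radial proof (on the ray from the center of $K_i$, $\hat q_i$ agrees with $q_i$ on the boundary and decreases at exact rate $B$ outside, since $\nabla\hat q_i=-Bu$ there); (ii) the case $\mu=0$ must be treated separately, as $C_{xy}$ and $K_i$ are undefined — there $\tilde p_{\FuB}$ reduces to $p_{\FuB}$ and the convex max rule of Example~\ref{ex:convex_pointwise} with $g=g_{i_\star}$ applies, mirroring the paper's separate handling of $\mu=0$ in Proposition~\ref{prop:WC_pUB_equiv_tpUB}; (iii) in identifying the class, your assertion that a maximal $F\in\bF_{p_{\FuB}}(\R^d)$ is genuinely $B$-Lipschitz needs the standard chaining argument (subdivide the segment $[x,y]$ into $n$ pieces so the quadratic error $\tfrac{\mu}{2}\|x-y\|^2/n$ vanishes as $n\to\infty$), since the pairwise inequalities only give $|f(x)-f(y)|\leq B\|x-y\|+\tfrac{\mu}{2}\|x-y\|^2$ directly. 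None of these is a gap in the idea; with them filled in, your proof is complete and, for the equivalence step, arguably cleaner than the paper's.
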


We proceed to the proof of Theorem \ref{thm:interp_WC_bounded_grad}, in two steps. We first show that $\tilde p_{\FuB}$ characterizes the class of weakly convex functions with bounded subgradients when imposed everywhere, before proving it is an interpolation constraint for this class. 
\paragraph{Equivalence of $\bF_{p_{\FuB}}(\R^d)$ and $\bF_{\tilde p_{\FuB}}(\R^d)$.}
\begin{proposition} \label{prop:WC_pUB_equiv_tpUB}
Consider $p_{\FuB}$ as defined in \eqref{ClassicalFuB} and $\tilde p_{\FuB}$ as defined in \eqref{22}. \com{It holds that $\bF_{p_{\FuB}}(\R^d)=\bF_{\tilde p_{\FuB}}(\R^d)$.}
\end{proposition}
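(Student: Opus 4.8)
The plan is to show that both maximal classes coincide with $\mathcal{W}$, the class of $\mu$-weakly convex, $B$-Lipschitz functions equipped with their full subdifferential, i.e. mappings $F=(f,g)$ with $f$ of this type and $g(x)=\partial f(x)$. Since $\tilde p_{\FuB}$ differs from $p_{\FuB}$ only by the nonnegative term $\frac{\mu}{2}\|y-C_{xy}\|^2$, satisfaction of $\tilde p_{\FuB}$ implies satisfaction of $p_{\FuB}$, so $\F_{\tilde p_{\FuB}}(\R^d)\subseteq \F_{p_{\FuB}}(\R^d)$; this is the free inclusion. All the content lies in proving that any $f$ satisfying $p_{\FuB}$ everywhere in fact satisfies the stronger $\tilde p_{\FuB}$.

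First I would characterize $\bF_{p_{\FuB}}(\R^d)$. Imposing the first line of $p_{\FuB}$ at a fixed $(x,g_x)$ for all $y$ is exactly the statement that $g_x+\mu x$ is a subgradient at $x$ of $h:=f+\frac{\mu}{2}\|\cdot\|^2$; requiring this at every point, with $g(x)$ nonempty, forces $h$ convex, i.e. $f$ to be $\mu$-weakly convex, with $g_x\in\partial f(x)$. The bound $\|g_x\|\le B$ then amounts to $f$ being $B$-Lipschitz. Maximality forces $g(x)$ to contain every $p_{\FuB}$-compatible subgradient, which by the same subgradient inequality is precisely $\partial f(x)$. Hence $\bF_{p_{\FuB}}(\R^d)=\mathcal{W}$.

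The crux is to prove that every $f\in\mathcal{W}$ satisfies $\tilde p_{\FuB}$, i.e. that for all $x,y$ and $g_x\in\partial f(x)$,
\[
f(y)\ge f(x)+\langle g_x,y-x\rangle-\tfrac{\mu}{2}\|x-y\|^2+\tfrac{\mu}{2}\|y-C_{xy}\|^2.
\]
I would obtain this by combining two valid lower bounds through an arbitrary intermediate point $w$: weak convexity at $x$ gives $f(w)\ge f(x)+\langle g_x,w-x\rangle-\frac{\mu}{2}\|w-x\|^2$, and $B$-Lipschitz continuity gives $f(y)\ge f(w)-B\|y-w\|$, so that
\[
f(y)\ge f(x)+\langle g_x,w-x\rangle-\tfrac{\mu}{2}\|w-x\|^2-B\|y-w\|\qquad\text{for all }w.
\]
The remaining step is to maximize the right-hand side over $w$. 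A first-order analysis shows that the optimal $w^\star$ lies on the sphere $\|w-(x+g_x/\mu)\|=B/\mu$ with $w^\star-(x+g_x/\mu)$ aligned with $y-w^\star$; that is, $w^\star$ is exactly the projection $C_{xy}$ of $y$ onto $\mathcal{B}_{x+g_x/\mu,\,B/\mu}$. Substituting $w=C_{xy}$ and simplifying, using the identity $g_x-\mu(C_{xy}-x)=-B\tfrac{y-C_{xy}}{\|y-C_{xy}\|}$, collapses the bound precisely to the right-hand side of $\tilde p_{\FuB}$; when $y\in\mathcal{B}_{x+g_x/\mu,\,B/\mu}$, one has $C_{xy}=y$ and the inequality reduces to weak convexity. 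I expect the identification of the maximizer with the projection $C_{xy}$, together with the verification that the optimal value matches the extra $\tilde p_{\FuB}$ term, to be the main obstacle; the availability of the genuine Lipschitz bound (rather than the weaker weak-convexity bound, which would carry a spurious $-\frac{\mu}{2}\|y-w\|^2$) is essential here.

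Finally I would conclude by transferring maximality across the inclusion $\F_{\tilde p_{\FuB}}(\R^d)\subseteq\F_{p_{\FuB}}(\R^d)$. Every $F\in\bF_{p_{\FuB}}(\R^d)=\mathcal{W}$ lies in $\F_{\tilde p_{\FuB}}(\R^d)$ by the crux, and being maximal already in the larger class it is maximal in the smaller one, so $\bF_{p_{\FuB}}(\R^d)\subseteq\bF_{\tilde p_{\FuB}}(\R^d)$. Conversely, any $F\in\bF_{\tilde p_{\FuB}}(\R^d)$ satisfies $p_{\FuB}$, hence $f$ is weakly convex and $B$-Lipschitz; since the $\tilde p_{\FuB}$- and $p_{\FuB}$-compatible subgradients both reduce to $\partial f(x)$, maximality forces $g(x)=\partial f(x)$, so $F\in\mathcal{W}=\bF_{p_{\FuB}}(\R^d)$. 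This yields the claimed equality.
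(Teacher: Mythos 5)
Your proposal is correct, but it proves the hard inclusion by a genuinely different route than the paper. The paper argues by contradiction, purely algebraically: assuming $p_{\FuB}$ holds everywhere while $\tilde p_{\FuB}$ fails at some pair $(x,y)$, it writes $f_y$ with an interpolation coefficient $\beta_0\in[0,1)$ on the extra term, then constructs a sequence of points $z_k$ sliding from $y$ toward $C_{xy}$ together with the recursion $\beta_{k+1}=2-1/\beta_k$, showing the coefficient is driven to $0$ in finitely many steps, at which point $p_{\FuB}$ itself is violated; it never identifies the class analytically and never invokes Lipschitz continuity, consistent with the paper's constraint-manipulation philosophy. You instead first identify $\bF_{p_{\FuB}}(\R^d)$ with the class $\mathcal{W}$ of $\mu$-weakly convex, $B$-Lipschitz functions carrying their full subdifferential, and then verify $\tilde p_{\FuB}$ in one shot by routing through the single intermediate point $w=C_{xy}$: combining $f(w)\geq f(x)+\langle g_x,w-x\rangle-\frac{\mu}{2}\|w-x\|^2$ with $f(y)\geq f(w)-B\|y-w\|$ and using $g_x-\mu(C_{xy}-x)=-B\frac{y-C_{xy}}{\|y-C_{xy}\|}$ does collapse exactly to the right-hand side of $\tilde p_{\FuB}$ (I checked: both sides equal $f(x)+\frac{\|g_x\|^2}{2\mu}+\frac{B^2}{2\mu}-B\|y-x-g_x/\mu\|$ when $y\notin\mathcal{B}_{x+g_x/\mu,B/\mu}$), and the maximality transfer across $\F_{\tilde p_{\FuB}}(\R^d)\subseteq\F_{p_{\FuB}}(\R^d)$ is sound in both directions. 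Your approach is shorter and more transparent, and it additionally delivers the identification of $\bF_{p_{\FuB}}(\R^d)$ as the weakly convex bounded-subgradient class, which Theorem~\ref{thm:interp_WC_bounded_grad} asserts but the paper's proof of this proposition does not itself establish; the price is reliance on two pieces of convex analysis you state without proof: that a subgradient inequality holding everywhere forces convexity of $f+\frac{\mu}{2}\|\cdot\|^2$, and that $\|g\|\leq B$ on the (nonempty-valued) mapping yields $B$-Lipschitz continuity of $f$ — the latter is not immediate, since the pointwise estimate from $p_{\FuB}$ carries a spurious $\frac{\mu}{2}\|y-w\|^2$ and one must subdivide the segment and pass to the limit to kill it. You should also dispose of $\mu=0$ separately (as the paper does), since your center $x+g_x/\mu$ is undefined there, though in that case the two constraints coincide trivially.
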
 

In the proof of Proposition \ref{prop:WC_pUB_equiv_tpUB}, we  will resort to the following Lemma.
 \begin{lemma} \label{leminterm}
Let $y_0$ be the projection onto $\mathcal{B}_{c,R}$ of $y \in \R^d \notin \mathcal{B}_{c,R}$, for some $c\in \R^d$ and $R\in \R_+$. Let \AR{$z=\alpha y+(1-\alpha) y_0$ for some $\alpha \in [0,1]$}. Then, $y_0$ is also the projection onto $\mathcal{B}_{c,R}$ of $z$. In addition, $\forall w \in \R^d$ such that $\|w\|\leq R$, 
  \begin{align*}
    \langle w, z-y\rangle \leq \langle c-y_0, z-y\rangle.
\end{align*}
\end{lemma}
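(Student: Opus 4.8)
The plan is to exploit the explicit geometry of projections onto a Euclidean ball, where everything collapses onto a single direction. First I would record the closed form of the projection: since $y\notin \mathcal{B}_{c,R}$ we have $\|y-c\|>R$, and the projection of $y$ onto the ball is $y_0=c+R\frac{y-c}{\|y-c\|}$, which lies on the boundary, $\|y_0-c\|=R$. The key observation is that $y-c$, $y_0-c$ and $y-y_0$ are all nonnegative multiples of the unit vector $u:=\frac{y-c}{\|y-c\|}$, and that $z-y_0=\alpha(y-y_0)$ while $z-y=-(1-\alpha)(y-y_0)$ are likewise collinear with $u$. Everything below is then one-dimensional bookkeeping in the direction $u$.

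For the first claim, that $y_0$ is also the projection of $z$, I would avoid any case analysis by using the variational characterization of the projection onto a closed convex set: $y_0=P_{\mathcal{B}_{c,R}}(z)$ if and only if $\langle z-y_0,\,w-y_0\rangle\le 0$ for every $w\in \mathcal{B}_{c,R}$. Since $y_0$ is the projection of $y$, this inequality holds with $y$ in place of $z$; multiplying it by $\alpha\ge 0$ and using $z-y_0=\alpha(y-y_0)$ immediately yields $\langle z-y_0,\,w-y_0\rangle\le 0$ for all $w\in\mathcal{B}_{c,R}$. This treats every $\alpha\in[0,1]$ uniformly, including the degenerate case $\alpha=0$ where $z=y_0$.

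For the second inequality I would reduce both sides to scalar multiples of $\langle\cdot,u\rangle$. Writing $z-y=-\lambda u$ with $\lambda:=(1-\alpha)(\|y-c\|-R)\ge 0$ and $c-y_0=-Ru$, the right-hand side $\langle c-y_0,\,z-y\rangle$ equals $\lambda R$, while the left-hand side $\langle w,\,z-y\rangle$ equals $-\lambda\langle w,u\rangle$. Since $\lambda\ge 0$, the claimed bound $-\lambda\langle w,u\rangle\le \lambda R$ reduces to $\langle w,u\rangle\ge -R$, which follows from Cauchy--Schwarz together with $\|w\|\le R$, namely $\langle w,u\rangle\ge -\|w\|\,\|u\|\ge -R$.

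I do not expect a genuine obstacle: the statement is essentially a one-dimensional fact dressed up in $\R^d$. The only points requiring care are the bookkeeping of the collinearity relations and checking that the edge cases ($\alpha=0$, and $z$ possibly lying on the boundary) are covered — both of which are absorbed automatically, the former by the variational-inequality argument and the latter by keeping $\lambda\ge 0$ throughout so that the final inequality degenerates harmlessly to $0\le 0$ when $\lambda=0$.
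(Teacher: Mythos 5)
Your proof is correct and follows essentially the same route as the paper's: the first claim is established by the identical variational-inequality argument (multiplying $\langle w-y_0,\,y-y_0\rangle\leq 0$ by $\alpha\geq 0$ and using $z-y_0=\alpha(y-y_0)$), and your second part is just an explicit unpacking, along the unit direction $u=(y-c)/\|y-c\|$, of the paper's terse observation that $c-y_0$ is the norm-$R$ vector parallel to and aligned with $z-y$, hence attaining the Cauchy--Schwarz bound $\langle w,z-y\rangle\leq R\|z-y\|$. If anything, your bookkeeping with $\lambda=(1-\alpha)(\|y-c\|-R)\geq 0$ is slightly more careful than the paper's formula $c-y_0=\frac{R(z-y)}{\|z-y\|}$, which is undefined in the degenerate case $\alpha=1$ where the claimed inequality reduces harmlessly to $0\leq 0$.
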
 
\begin{proof}
Note that $\forall x \in \mathcal{B}_{c,R}$, it follows from, e.g., \cite[Proposition 1.1.9]{bertsekas2009convex}, that $\langle x-y_0,z-y_0\rangle =\alpha \langle x-y_0,y-y_0\rangle\leq 0$. Hence, it holds that the projection of $z$ onto $\mathcal{B}_{c,R}$ equals $y_0$. \com{It also holds that for any $\|w\|\leq R$, 
    $\langle w, z-y\rangle\leq R\|z-y\|$, 
and $c-y_0$ reaches that since of norm $R$, parallel to, and of same direction as $z-y$. Hence, $c-y_0=\frac{R(z-y)}{\|z-y\|}$.}
\end{proof}

We now proceed to the proof of Proposition \ref{prop:WC_pUB_equiv_tpUB}.
\begin{proof}
The case $\mu=0$ and the inclusion $\bF_{\tilde p_{\FuB}}(\R^d) \subseteq \bF_{p_{\FuB}}(\R^d)$ follow from the definition, since the inequalities only differ by a positive term $\mu \|\cdot\|^2$. Hence, we consider $\mu>0$ and establish $\bF_{p_{\FuB}}(\R^d) \subseteq \bF_{\tilde p_{\FuB}}(\R^d)$. 

By contradiction, suppose there exists \AR{a mapping $(f,g)$} satisfying everywhere $p_{\FuB}$ but not $\tilde p_{\FuB}$. Then, at some $x,y\in \R^d$, given \AR{$f_x=f(x)$, $f_y=f(y)$, and for some $g_x\in g(x)$, $g_y\in g(y)$}, $p_{\FuB}$ is satisfied while $\tilde{p}_{\FuB}$ is violated, i.e., $$p_{\FuB}^{xy}\geq 0, \ p_{\FuB}^{yx}\geq 0, \ \tilde p_{\FuB}^{xy}< 0, \text{ w.l.o.g. in the choice of } x \text{ and }y.$$   \com{Since $p_{\FuB}^{xy}$ and $\tilde p_{\FuB}^{xy}$ only differ by $\mu \|C_{xy}-y\|^2$, this implies $C_{xy}\neq y$, where $C_{xy}$ is the projection of $y$ onto $\mathcal{B}_{\frac{g_x}{\mu}+x, \frac{B}{\mu}}$, and the existence of some $\beta_0 \in [0,1)$ such that
   \begin{align}
    &f_y=f_x+\langle g_x, y-x\rangle-\frac{\mu}{2}\|x-y\|^2+\frac{\beta_0 \mu}{2}\|C_{xy}-y\|^2,\label{assum3}
\end{align}
since $f_y$ is larger than the right-hand side with $\beta_0=0$ due to $p_{\FuB}^{xy}\geq 0$, and smaller than the right-hand side with $\beta_0=1$ due to $\tilde p_{\FuB}^{xy}<0$. }

\com{Consider the decreasing sequence $\{\beta_k\}_{k=0,1,...}$, initialized in \eqref{assum3} and defined by     \AR{\begin{align}
    \beta_{k+1}=\begin{cases}
        2-
    \frac{1}{\beta_k} & \beta_k >\frac{1}{2} \\
    0 & \beta_k\leq \frac{1}{2}
    \end{cases}
    \quad k=0,1,...,\label{recurs}
\end{align}} and the associated sequence $\{z_k\}_{k=0,1,...}$, initialized by $z_0=y$ and defined by
\AR{$$z_{k+1}=\begin{cases}
    \beta_kz_k+(1-\beta_k)C_{xy},& \beta_k\neq 0\\
    \frac{1}{2} (y+C_{xy})& \beta_k=0
\end{cases} \quad k=1,2,....$$}
Let $k=1$. Since $(f,g)$ satisfy $p_{\FuB}$ everywhere, letting $f_{z_1}=f(z_1)$, and for all $g_{z_1}\in g({z_1})$, it holds, 
   \begin{align}
\begin{split}
        &f_{z_1}\geq f_x +\langle g_x, {z_1}-x\rangle-\frac{\mu}{2}\|{z_1}-x\|^2:=f_{z_1}^-,  \\&
        f_{z_1}\leq f_y+\langle g_{z_1},{z_1}-y\rangle+\frac{\mu}{2}\|{z_1}-y\|^2:=f_{z_1}^+,\\& \|g_{z_1}\|\leq B.
    \label{interm1}
    \end{split}
\end{align}}
\com{\indent Since $C_{xy}$ is the projection of $y$ onto $\mathcal{B}_{\frac{g_x}{\mu}+x,\frac{B}{\mu}}$ and $z_1$ is a convex combination of $C_{xy}$ and $y$, by Lemma \ref{leminterm}, it holds 
  \begin{align}
    \langle g_{z_1}, {z_1}-y\rangle&\leq \langle g_x+\mu (x-C_{xy}), {z_1}-y\rangle.\label{ineq1}
\end{align}
In addition, by definition, ${z_1}-y=-K({z_1}-C_{xy})$, where $K=\frac{(1-\beta_0)}{\beta_0}$ if $\beta_0\neq 0$, $K=1$ otherwise. We have 
   \begin{align}
    \langle x-C_{xy}, {z_1}-y\rangle&=\langle z_1-C_{xy}, {z_1}-y\rangle+\langle x-z_1, {z_1}-y\rangle\nonumber\\
    &=-K\|{z_1}-C_{xy}\|^2+\frac{\|x-y\|^2}{2}-\frac{\|z_1-x\|^2}{2}-\frac{\|z_1-y\|^2}{2}.\label{ineq2}
\end{align}
Consequently, 
   \begin{align*}
f_{z_1}^+&\underset{\eqref{ineq1}}{\leq } f_y+\langle g_x,{z_1}-y\rangle +\mu \langle x-C_{xy}, {z_1}-y\rangle+\frac{\mu\|z_1-y\|^2}{2}
\\
&\underset{\eqref{ineq2}}{=} f_y+\langle g_x,{z_1}-y\rangle-K\|{z_1}-C_{xy}\|^2+\frac{\mu \|x-y\|^2}{2}-\frac{\mu\|z_1-x\|^2}{2}
\\&\underset{\eqref{assum3}}{=}f_x+\langle g_x,{z_1}-x\rangle-\frac{\mu}{2}\|{z_1}-x\|^2+\frac{\mu\beta_0}{2}\|C_{xy}-y\|^2-K\| C_{xy}-{z_1}\|^2\end{align*}}

\AR{If $\beta_0=0$, this implies $f_{z_1}^+=f_{z_1}^- -\| C_{xy}-{z_1}\|^2<f_{z_1}^-$, hence the assumption "$(f,g)$ satisfy $p_{\FuB}$ everywhere" cannot hold at $z_1$.} Else, 
\begin{align*}
    f_{z_1}^+&\underset{C_{xy}-y=\frac{1}{\beta_0}( C_{xy}-{z_1})}{=}f_{z_1}^-+\frac{\mu}{2}\frac{2\beta_0-1}{\beta_0}\| C_{xy}-{z_1}\|^2=f_{z_1}^-+\frac{\mu}{2}\frac{2\beta_0-1}{\beta_0}\|C_{xz_1}-z_1\|^2,
\end{align*}
where the last equality follows from the first assertion in Lemma \ref{leminterm}. By definition of $\beta_1$, $f_{z_1}^-$ and $f_{z_1}^+$, it then holds
   \begin{align}
    f_{z_1}= f_x+\langle g_x,{z_1}-x\rangle-\frac{\mu}{2}\|{z_1}-x\|^2+\frac{\beta_1\mu}{2}\|C_{x{z_1}}-{z_1}\|^2,\label{assum3new}
\end{align} i.e., \eqref{assum3} with $z_1$ and $\beta_1$ replacing $z_0$ and $\beta_0$. Starting from \eqref{assum3new} and repeating the argument with $k=2,3,...$, it holds
   \begin{align}
    f_{z_k}= f_x+\langle g_x,{z_k}-x\rangle-\frac{\mu}{2}\|{z_k}-x\|^2+\frac{\beta_k\mu}{2}\|C_{x{z_k}}-{z_k}\|^2, \ k=1,2,...,\label{assum3newnew}
\end{align}
where $\beta_k$ is defined as in \eqref{recurs}.
The sequence $\{\beta_k\}_{k=1,2,...}$ can be equivalently defined as  \AR{\begin{align}
    \beta_k=\begin{cases}
        \frac{(k+1)\beta_0-k}{k\beta_0-(k-1)}& k<\frac{2\beta_0-1}{1-\beta_0}\\
        0 & k\geq \frac{2\beta_0-1}{1-\beta_0}
    \end{cases}, \quad k=0,1,....\label{recurs1}
\end{align} }
\AR{Hence, at any $z_{k+1}$ where $k\geq \frac{2\beta_0-1}{1-\beta_0}$, following the same argument as when $\beta_0=0$, replacing $\beta_0$ by $\beta_k=0$ and $z_1$ by $z_{k+1}$, the assumption "$(f,g)$ satisfy $p_{\FuB}$ everywhere" cannot hold, which completes the proof.}
\end{proof}
\paragraph{Interpolability of $\tilde p_{\FuB}$.}
We now prove interpolability of $\tilde{p}_{\FuB}$, using pointwise extensibility. 
 \begin{proposition}\label{prop:WC_tpUB_is_interp}
     Constraint $\tilde{p}_{\FuB}$, defined in \eqref{22}, is an interpolation constraint for $\bF_{\tilde{p}_{-\mu,B}}(\R^d)$.
 \end{proposition}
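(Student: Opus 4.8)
The plan is to invoke Theorem~\ref{theoequi}, which reduces interpolability to regular pointwise extensibility. Since $\tilde p_{\FuB}$ is evidently continuous (it is built from inner products, norms, and a projection onto a ball, all continuous operations) and differentially consistent (its first inequality is exactly \eqref{eq:subdiff_gx} strengthened by the positive term $\frac{\mu}{2}\|y-C_{xy}\|^2$, which only makes the subgradient inequality tighter), and since the second inequality $\|g_x\|\leq B$ gives an immediate uniform bound on subgradients, regular extensibility will follow from Lemma~\ref{lem:reg_ext_example} via its third bullet \eqref{eq:reg3} — provided we can construct an extension whose new subgradient has norm at most $B$. So the work reduces entirely to establishing \emph{pointwise extensibility} with a subgradient bounded by $B$.

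Concretely, first I would fix a finite dataset $S=\{(x_i,f_i,g_i)\}_{i\in[N]}$ satisfying $\tilde p_{\FuB}$ and an arbitrary new point $z\in\R^d$, and exhibit an explicit pair $(f_z,g_z)$ extending the constraint. Mimicking the convex case of Example~\ref{ex:convex_pointwise}, the natural candidate is the pointwise-maximal lower envelope: set
\begin{align*}
    f_z \;=\; \max_{i\in[N]} \Big( f_i+\langle g_i,z-x_i\rangle-\tfrac{\mu}{2}\|z-x_i\|^2+\tfrac{\mu}{2}\|z-C_{x_i z}\|^2\Big),
\end{align*}
let $i_\star$ achieve this maximum, and define $g_z$ from the active constraint at $i_\star$. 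The choice of $g_z$ must simultaneously guarantee three things: (i) $\tilde p_{\FuB}^{z,x_j}\geq0$ for all $j$ (the new point lower-bounds every old one), (ii) $\tilde p_{\FuB}^{x_i,z}\geq0$ for all $i$ (every old point lower-bounds the new one, which holds for free at $i_\star$ by the maximization and must be checked for $i\neq i_\star$), and (iii) $\|g_z\|\leq B$, which is what feeds \eqref{eq:reg3}. The geometry behind $C_{x_i z}$ — namely that $\tilde p_{\FuB}$ was precisely designed (see Figure~\ref{fig:counterexample}) so the minimal admissible value at $z$ always corresponds to a subgradient of norm at most $B$ — is exactly what should make (iii) succeed where the weaker $p_{\FuB}$ failed.

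I expect the main obstacle to be choosing $g_z$ so that direction~(i), the \emph{lower} inequalities $\tilde p_{\FuB}^{z,x_j}\geq0$, hold simultaneously with the norm bound $\|g_z\|\leq B$. In the plain convex envelope one simply inherits $g_z=g_{i_\star}$ and the supporting-hyperplane argument closes everything; here the quadratic correction term and the projection $C$ couple the value at $z$ to the admissible subgradient in a nontrivial way, so $g_{i_\star}$ alone need not work and one likely must take $g_z$ to be a suitably clipped or projected version, using Lemma~\ref{leminterm} to control the cross terms $\langle g_z, x_j-z\rangle$ against $\langle \tfrac{g(x_i)}{\mu}+x_i - C, \cdot\rangle$. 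The delicate case is when $z$ lies beyond the critical threshold for several indices at once, so that the active index $i_\star$ is attained on the strengthened (projected) branch of the constraint rather than the plain weakly-convex branch. Once the explicit $(f_z,g_z)$ is shown to satisfy (i)–(iii), pointwise extensibility with bounded subgradient is established, \eqref{eq:reg3} applies, regular extensibility follows, and Theorem~\ref{theoequi} together with Proposition~\ref{prop:WC_pUB_equiv_tpUB} identifies the interpolated class as $\bF_{p_{\FuB}}(\R^d)$, completing the proof.
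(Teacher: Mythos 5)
Your high-level route coincides with the paper's: both define the value at the new point as the maximal lower envelope $f=\max_{i\in[N]}\big(f_i+\langle g_i,x-x_i\rangle-\frac{\mu}{2}\|x-x_i\|^2+\frac{\mu}{2}\|x-C_{i+}\|^2\big)$, prove pointwise extensibility with a subgradient of norm at most $B$, and conclude via Lemma~\ref{lem:reg_ext_example} (the paper invokes \eqref{eq:reg1}, which the bound $\|g_x\|\leq B$ satisfies with $A=0$; your appeal to \eqref{eq:reg3} would work equally well) and Theorem~\ref{theoequi}. However, the proposal stops precisely where the actual proof begins. You never specify $g_z$, and you yourself flag the verification of the downward inequalities $\tilde p_{\FuB}^{z,x_j}\geq 0$ as the main obstacle, resolving it only with ``one likely must take $g_z$ to be a suitably clipped or projected version.'' That construction and its verification are the entire mathematical content of the proposition. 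The paper's choice is explicit: $g=g_{i^*}+\mu(x_{i^*}-C_{i^*+})$, i.e., $\mu$ times the vector from the projection $C_{i^*+}$ back to the ball center $\frac{g_{i^*}}{\mu}+x_{i^*}$; the norm bound $\|g\|\leq B$ is then automatic, since $C_{i^*+}\in\mathcal{B}_{\frac{g_{i^*}}{\mu}+x_{i^*},\frac{B}{\mu}}$. The downward inequalities then follow from two geometric facts your sketch does not anticipate: (a) the point $C_{i^*j}+x-C_{i^*+}$ lies in the \emph{new} ball $\mathcal{B}_{\frac{g}{\mu}+x,\frac{B}{\mu}}$, which yields the bound $\|x_j-C_{+j}\|\leq\|x_j-C_{i^*j}-x+C_{i^*+}\|$; and (b) the obtuse-angle property of projections, $\langle x-P_x,P_x-z\rangle\geq 0$, which disposes of the residual cross term $\langle C_{i^*j}-C_{i^*+},x-C_{i^*+}\rangle$. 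The dataset inequality $\tilde p_{\FuB}^{ji^*}\geq 0$ then closes the chain $f_j\geq f_j^-$, establishing \eqref{B2}. Without these steps, nothing is proved; the gap is not cosmetic but is the heart of the argument.

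Two smaller inaccuracies are worth noting. First, your guess that Lemma~\ref{leminterm} would control the cross terms is off: in the paper, that lemma serves Proposition~\ref{prop:WC_pUB_equiv_tpUB} (the equivalence $\bF_{p_{\FuB}}(\R^d)=\bF_{\tilde p_{\FuB}}(\R^d)$), not the extensibility argument, which instead relies on the projection inequality in (b) above. Second, your bookkeeping of which inequalities come for free is inverted: defining $f_z$ as the maximum of the lower bounds makes \emph{all} inequalities $\tilde p_{\FuB}^{x_i,z}\geq 0$ hold automatically for every $i$ (with equality at $i_\star$), not only at $i_\star$; what genuinely requires work is exactly the family $\tilde p_{\FuB}^{z,x_j}\geq 0$ together with $\|g_z\|\leq B$ --- which, to your credit, you correctly single out as the crux, but do not resolve. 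Note also that the proposition itself only concerns $\bF_{\tilde p_{\FuB}}(\R^d)$; the identification with $\bF_{p_{\FuB}}(\R^d)$ via Proposition~\ref{prop:WC_pUB_equiv_tpUB} belongs to Theorem~\ref{thm:interp_WC_bounded_grad}, not here.
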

\begin{proof}
Consider $S=\{(x_i,f_i,g_i\}_{i\in [N]}$ satisfying $p$, and any $x\in \R^d$. We seek for some $f \in \mathbb{R}$, $g \in \mathbb{R}^d$ extending $S$ to $x$, that is
   \begin{align}
    &f\geq f_i+\langle g_i,x-x_i\rangle-\frac{\mu}{2}\|x-x_i\|^2+\frac{\mu}{2}\|x-C_{i+}\|^2, \quad \forall i \in [N] \label{B1} \\ 
    &f_j \geq f+\langle g,x_j-x\rangle-\frac{\mu}{2}\|x-x_j\|^2+\frac{\mu}{2}\|x_j-C_{+j}\|^2:=f_j^- , \quad \forall j \in [N]\label{B2}\\ 
    &\|g\|\leq B, \label{B3}  
\end{align}
where $C_{i+}$ denotes the projection of $x$ onto $\mathcal{B}_{\frac{g_i}{\mu}+x_i,\frac{B}{\mu}}$ and $C_{+j}$ denotes the projection of $x_j$ onto $\mathcal{B}_{\frac{g}{\mu}+x,\frac{B}{\mu}}$. Let  
   \begin{align*}
i^*&=\underset{i\in [N]}{\text{argmax }} f_i+\langle g_i,x-x_i\rangle-\frac{\mu}{2}\|x-x_i\|^2+\frac{\mu}{2}\|x-C_{i+}\|^2,\\f&=f_{i^*}+\langle g_{i^*},x-x_{i^*}\rangle-\frac{\mu}{2}\|x-x_{i^*}\|^2+\frac{\mu}{2}\|x-C_{i^*+}\|^2,\\
g&= g_{i^*}+\mu (x_{i^*}-C_{i^*+}).
\end{align*}
Then, $f$ satisfies \eqref{B1}, and  by definition of $C_{i^*+}$, $g$ satisfies \eqref{B3}. It remains to \com{show \eqref{B2} is satisfied, i.e., $\forall j\in [N] $, $f_j\geq f_j^-$. Given the expression of $f$ and $g$, $f_j^-$ becomes}
   \begin{align*}
    f_j^-&= f_{i^*}+\langle g_{i^*},x_j-x_{i^*}\rangle+\mu\langle x_{i^*}-C_{i^*+},x_j-x\rangle\\&\quad-\frac{\mu}{2}\|x-x_j\|^2+\frac{\mu}{2}\|x_j-C_{+j}\|^2-\frac{\mu}{2}\|x-x_i^*\|^2+\frac{\mu}{2}\|x-C_{i^*+}\|^2\\&
    =f_{i^*}+\langle g_{i^*},x_j-x_{i^*}\rangle-\frac{\mu}{2}\|x_{i^*}-x_j\|^2+\frac{\mu}{2}\|x_j-C_{+j}\|^2+\frac{\mu}{2}\|x-C_{i^*+}\|^2\\&\quad+\mu\langle x-C_{i^*+},x_j-x\rangle.
\end{align*}
Observe that, by definition of $g$, $C_{i^*j}+x-C_{i^*+}\in \mathcal{B}_{\frac{g}{\mu}+x,\frac{B}{\mu}}$, implying    \begin{align*}\|x_j-C_{+j}\|^2&\leq \|x_j-C_{i^*j}-x+C_{i^*+}\|^2\\&=\|x_j-C_{i^*j}\|^2+\|x-C_{i^*+}\|^2 -2\langle x_j-C_{i^*j}, x-C_{i^*+} \rangle
\\
\Leftrightarrow \|x-C_{i^*+}\|^2+\|x_j-C_{+j}\|^2&\leq \|x_j-C_{i^*j}\|^2+2\|x-C_{i^*+}\|^2\\&\quad-2\langle x_j-C_{i^*j}, x-C_{i^*+} \rangle\\&=\|x_j-C_{i^*j}\|^2-2\langle x_j-C_{i^*j}+C_{i^*+}-x, x-C_{i^*+}\rangle.
\end{align*} 
Hence, 
 \begin{align}
\begin{split}
f_j^-&\leq f_{i^*}+\langle g_{i^*},x_j-x_{i^*}\rangle-\frac{\mu}{2}\|x_{i^*}-x_j\|^2+\frac{\mu}{2}\|x_j-C_{i^*j}\|^2\\&\quad+\mu\langle C_{i^*j}-C_{i^*+}, x-C_{i^*+}\rangle\\
&\leq f_{i^*}+\langle g_{i^*},x_j-x_{i^*}\rangle-\frac{\mu}{2}\|x_{i^*}-x_j\|^2+\frac{\mu}{2}\|x_j-C_{i^*j}\|^2,\label{lastbrick}
\end{split}
\end{align}\vspace{-0.3cm}
where the last inequality follows from the fact that $C_{i^*j}\in \mathcal{B}_{\frac{g_{i^*}}{\mu}+x_{i^*},\frac{B}{\mu}}$ and that for any closed set $C$, $x\in \R^d$, $z \in C$, and $P_x$ the projection of $x$ onto $C$, it holds that $\langle x-P_x, P_x-z\rangle \geq 0$. Finally, observe that $\forall j =[N]$,
   \begin{align*}
f_j&\underset{p^{ji^*}\geq 0}{\geq} f_{i^*}+\langle g_{i^*},x_j-x_{i^*}\rangle-\frac{\mu}{2}\|x_{i^*}-x_j\|^2+\frac{\mu}{2}\|x_j-C_{i^*j}\|^2\underset{\eqref{lastbrick}}{\geq} f_j^-,
\end{align*}
so that \eqref{B2} is satisfied and 
$\tilde{p}_{\FuB}$ is pointwise extensible. \AR{In addition, $\tilde{p}_{\FuB}$ is continuous and differentially consistent since $-\frac \mu 2 x^2 =\omega(x)
$, and $\|g\|\leq B$, i.e., $g$ is regularly extensible since it satisfies \eqref{eq:reg1}, see Lemma \ref{lem:reg_ext_example}.} By Theorem \ref{theoequi}, it is an interpolation constraint for $\bF_{\tilde{p}_{\FuB}}(\R^d)$.
\end{proof}
Combining Propositions \ref{prop:WC_pUB_equiv_tpUB} and \ref{prop:WC_tpUB_is_interp} proves Theorem \ref{thm:interp_WC_bounded_grad}.
\paragraph{Performance analysis of the subgradient method on $\bF_{p_{\FuB}}(\R^d)$.} We now show how relying on $\tilde p_{\FuB}$ allows improving on performance guarantees for the subgradient method on $\bF_{p_{\FuB}}(\R^d)$, whose steps are given by:
   \begin{align*}
    x_{k+1}=x_k-h g_k, \ g_k \in \partial f(x_k), \ k=0,1,...,N.
\end{align*}

In the non smooth non convex setting, since usual criteria of efficiency can vary discontinuously \cite{davis2019stochastic,nurminskii1973quasigradient}, one classically relies on the \emph{Moreau envelope} \cite{moreau}, defined as
   \begin{equation}
    f_{\rho }(x):=\underset{y \in \R^d}{\text{min }}{f(y)+\frac{\|x-y\|^2}{2\rho}} \  \forall x\in \R^d, 
\end{equation}
and takes the norm of its gradient as a near-stationarity measure, see \cite{davis2019stochastic} for more details. As in \cite{das2024branch}, we thus choose as performance measure    \begin{align*}
    &\mathcal{E}=\frac{1}{N+1}\sum_{i=0}^N\|\nabla f_{\rho}(x_i)\|^2=\frac{1}{N+1}\sum_{i=0}^N \frac{\|x_i-\textbf{prox}_{\rho f}(x_i)\|^2}{\rho^2},\\
    \text{where }\quad &\textbf{prox}_{\rho f}(x):=\underset{y \in \R^d}{\text{argmin }}{f(y)+\frac{\|x-y\|^2}{2\rho}}\ \forall x\in \R^d,
\end{align*} and $x_0,x_1,...$ are the iterates of the method. As initial condition, we impose $f_{\rho }(x_0)-f^*\leq R^2$. \com{To derive the exact worst-case performance of the subgradient method after $N$ iterations on $\bF_{p_{\FuB}}(\R^d)$, we follow the PEP framework introduced in Appendix \ref{app:PEP}, and formulate the search of a worst-case function in $\bF_{\tilde p_{\FuB}}(\R^d)$ as an optimization problem over a set $S$ of iterates satisfying $\tilde p_{\FuB}$}, i.e.,
   \begin{equation}
     \begin{aligned}
   &  && \underset{\substack{S=\{(x_i,f_i,g_i)\}_{i\in [N]\cup*}\\ S_y=\{(y_i,f^y_i,g^y_i)\}_{i\in [N]} }}{\max\ } \frac{1}{N+1}\sum_{i=0}^N \frac{\|x_i-y_i\|^2}{\rho^2}\label{PEP_wc}\\
     &\text{Subgradient method:}&&\text{ s.t. } x_{i+1}=x_i-h g_i, \quad i\in [N-1]\\& y_i=\textbf{prox}_{\rho f}(x_i):
     &&\quad \quad y_i=x_i-\frac{1}{\rho}g_i^y, i\in [N]\\
     &S\cup S_y\text{ is } \bF_{\tilde p_{\FuB}(\R^d)}\text{-interpolable: }&&\quad \quad \tilde p^{ij}_{\FuB}\geq 0 \ \forall i ,j\in S\cup S_y\\
     &\text{Initial condition:}&&\quad \quad f^y_0-f^*+ \AR{\frac{\|x_0-y_0\|^2}{2\rho}} \leq R^2 \\
     &\text{Optimality:}&&\quad \quad g^*=0,\ f_i\geq f^*,\  f_i^y\geq f^*,\  i\in [N], 
\end{aligned}
\end{equation}
where the interpolation conditions impose $f_i=f(x_i)$, $g_i\in\partial f(x_i)$, $f_i^y=f(y_i)$, $g^y_i\in \partial f(y_i)$ for a single function $f \in \bF_{\tilde p_{\FuB}}$.
 \begin{remark}
    The bound obtained in \cite{das2024branch} results from a similar problem to \eqref{PEP_wc}, with two main differences. First, a single iteration is analyzed, to derive a Lyapounov  function valid for all iterations. Then, the constraint imposed on $S\cup S_y$ is $p_{\FuB}$ and not $\tilde p_{\FuB}$. The improvement in the bound we obtain thus arises from (i) relying on all information available instead of a single iteration, and (ii) the benefit of relying on interpolation constraints, even when being able to optimally combine constraints. 
\end{remark}

\com{In Appendix \ref{app:weaklyconvprog}, we show that Problem \eqref{PEP_wc} is tractable since linear in the function values and scalar products of $x_i$, $g_i$ and $C_{ij}$, and hence amenable to SDP formulation.}

Letting $\rho=2$ as in \cite{das2024branch}, we compare on Figure \ref{fig1} the bound resulting from \eqref{PEP_wc} with the bounds obtained (i) via a classical analysis \cite{davis2019stochastic} and (ii) via PEP \cite{das2024branch}, both relying on $p_{\FuB}$, as summarized in Table \ref{tab:summary}. \com{The analytical expression from \cite{das2024branch} was identified based on the numerical results obtained via PEP, which are exact bounds, and afterwards proved analytically. \begin{remark}\label{rem:minN}
    By definition of $\bF_{\tilde p_{\FuB}}(\R^d)$ and $\mathcal{E}$, it holds that the performance measure is at most of $B^2$. Hence, bounds on $\mathcal{E}$ are trivial whenever larger than $B^2$.
\end{remark}}
\begin{table}[ht]
    \centering
    \renewcommand{\arraystretch}{1.3}
    \begin{tabular}{@{}llll@{}}
    \toprule
    \textbf{Type of analysis} & \textbf{Optimal $h$} & \textbf{Bound on $\mathcal{E}$} & \textbf{Minimal $N$}\\ \toprule
        Conditions $p_{\FuB}$,&$\frac{R\mu}{B\sqrt{N+1}}$ &   $            \frac{4BR}{\mu \sqrt{N+1}}$ & $N\geq (\frac{4R}{\mu B})^2-1$  \\
        classical analysis \cite{davis2019stochastic}&& &  \\\hline
         Conditions $p_{\FuB}$,& $\frac{\sqrt{4\frac{R\mu}{B}(N+1)+1}}{2(N+1)}$&    $B^2\frac{2\sqrt{4\frac{R^2\mu^2}{B^2}(N+1)+1}-1}{\mu^2(N+1)}$& $N\geq (\frac{2R}{\mu B})^4-1$\\
          PEP-based analysis \cite{das2024branch}&  & &\\\bottomrule
    \end{tabular}    
    \caption{\com{Existing bounds on the performance of the subgradient method on $\bF_{\tilde p_{\FuB}}(\R^d)$, and associated stepsizes minimizing these bounds. The minimal $N$ is the value for which the bounds becomes non trivial, see Remark \ref{rem:minN}.}}
    \label{tab:summary}
\end{table} 

\begin{figure}[ht!]
    \centering
    \begin{subfigure}{0.49\textwidth}
        \centering
\includegraphics[width=\textwidth]{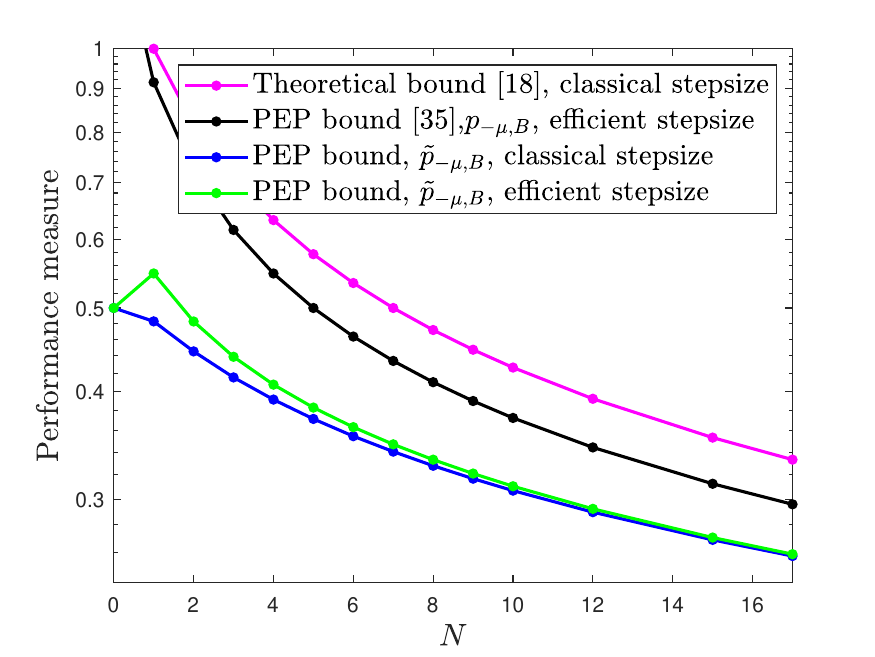}
    \end{subfigure}%
    ~ 
    \begin{subfigure}{0.49\textwidth}
        \centering
\includegraphics[width=\textwidth]{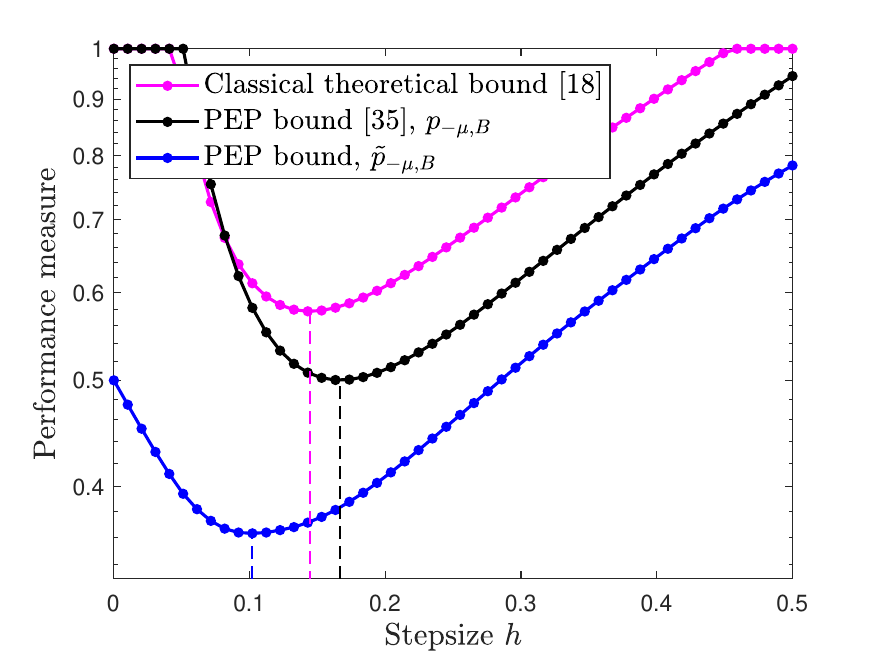}
    \end{subfigure}
    \caption{Performance of the subgradient method on $\bF_{\tilde p_{\FuB}}$ for $R^2=\frac{1}{8}$ and $B,L=1$. On the left-hand side figure, the PEP bounds obtained relying on $\tilde p_{\FuB}$, with both choices of stepsizes (obtained by minimizing existing bounds) outperform the state of the art ones, and are always non-trivial. In addition, the classical stepsize actually yields slightly better results than the efficient one. On the right-hand side figure, the bounds obtained after $5$ iterations are compared with respect to different choices of stepsizes $h$. Optimizing the PEP-based bound relying on $p_{\FuB}$ leads to an unoptimal choice of $h$.}
\label{fig1}
\end{figure}

Solving Problem \eqref{PEP_wc}, we outperform both bounds with both choices of the stepsize $h$, as illustrated on Figure \ref{fig1}. We also observe that in the exact setting, the efficient stepsize as computed in \cite{das2024branch} turns out to be less efficient than the one derived in \cite{davis2019stochastic}, highlighting the necessity to not only optimally combining constraints representing function classes, but also to rely on interpolation constraints to efficiently tune methods, and illustrating the possible drawbacks of optimizing steps based on conservative bounds. 
 \begin{remark}
Note that pointwise extensibility allowed verifying whether or not $\tilde p_{\FuB}$ was an interpolation constraint by straightforwardly manipulating algebraic inequalities. The difficulties lied in proving $\bF_{\tilde p_{\FuB}}(\R^d)=\bF_{p_{\FuB}}(\R^d)$, which further motivates the proposed constraint-based approach.  \newpage
\end{remark}
\section{Exhaustive analysis of a class of constraints}\label{sec:gram}
Motivated by the difficulty to obtain equivalent characterizations of given a function class, and by the observation that function classes considered as interesting in optimization do not always have a simple tight description, see Section \ref{sec:weakconv}, we illustrate the constraint-based approach to interpolation presented in Section \ref{sec:interp} on a simple class of constraints. We consider constraints consisting of a single inequality, linear in function values and scalar product of points and subgradients, \AR{and such that $p^{xx}=0$, i.e., any single data point satisfies $p$.} Hence, we analyze constraints \begin{align}\label{generalConstraint}
    p^{ij}\geq 0 \Leftrightarrow f_i&\geq f_j+B\|g_i\|^2+C\|g_j\|^2+D\langle g_j,g_i\rangle +E\|x_i\|^2+F\|x_j\|^2\nonumber\\&+G\langle x_i,x_j\rangle +H \langle g_i,x_i\rangle +I \langle g_i,x_j\rangle+J \langle g_j,x_i\rangle+K \langle g_j,x_j\rangle,
\end{align}
where $D=-(B+C)$, $G=-(E+F)$ and $K=-(H+I+J)$. Indeed, such constraints fit the PEP framework, and tightly describe several first-order function classes, for given parameters. For instance, $\forall -\infty < \mu<L< +\infty$,  \begin{align}\label{ClassicalmuL}
       p^{ij}\geq 0 \Leftrightarrow f_i\geq f_j+\frac{\langle Lg_j-\mu g_i,x_i-x_j\rangle}{L-\mu}+\frac{\|g_i-g_j\|^2}{2(L-\mu)}+\frac{\mu L \|x_i-x_j\|^2}{2(L-\mu)},
\end{align} is an interpolation constraint characterizing $\mu$-strongly convex $L$-smooth functions \cite[Theorem 4]{taylor2017smooth}.

We show that the only differentially consistent interpolation constraints of the form \eqref{generalConstraint} are \eqref{ClassicalmuL}, and \com{a constraint describing the class of quadratic functions $f(x)=\frac{\mu}{2}\|x\|^2+bx+c$ for some $b\in\R^d$, $c\in \R$, i.e., the limit case of \eqref{ClassicalmuL}}. Equivalently, these are the only function classes described by \eqref{generalConstraint} that admit a simple tight characterization.
\begin{remark}
    \AR{The second function class is in itself trivial, since it can be exactly characterized by equalities $g_i=\mu x_i+b$, $f_i=\frac{\mu}{2}\|x_i\|^2+bx+c$. However, we present its characterization via a constraint of the form \eqref{generalConstraint} (i) as a part of the exhaustive analysis we conduct, and (ii) to characterize the whole spectrum $-\infty< \mu \leq L<+\infty$  by a set of similar inequalities, without having to distinguish $\mu=L$ as a special case.}
\end{remark}
 \begin{theorem}\label{thm:extensive_analysis_results}
    Let $p$ be of the form \eqref{generalConstraint}. Then $p$ is (i) differentially consistent, and (ii) an interpolation constraint if and only if $p^{ij}\geq 0$ can be expressed as in \eqref{ClassicalmuL}, where $\mu<L$ and $\mu, L \in (-\infty, \infty)$, or as 
      \begin{align}
    p^{ij}\geq 0 \Leftrightarrow f_i\geq f_j+\frac{1}{2}\langle g_i+g_j,x_i-x_j\rangle+ M\|g_i-g_j-\mu(x_i-x_j)\|^2,\label{youpie}
\end{align}
where $M > 0$. \com{In the first case, $\bF_p(\R^d)$ is the class of $\mu$-(strongly/weakly)-convex $L$-smooth functions, and in the second case, $\bF_p(\R^d)$ is the class of quadratic functions $f(x)=\frac{\mu}{2}\|x\|^2+bx+c$ for some $b\in\R^d$, $c\in \R$.}
\end{theorem}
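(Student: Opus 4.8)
The plan is to treat \eqref{generalConstraint} as a quadratic form in the differences $u=g_i-g_j$ and $v=x_i-x_j$, and to squeeze the admissible coefficients first by differential consistency and then by pointwise extensibility, until only the two announced families survive. I will organize the whole argument around one scalar invariant of the coefficients, $\Phi:=4BE-H^2+H$, and show that interpolability forces $\Phi\in\{0,\tfrac14\}$.

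\emph{Step 1 (reduction by differential consistency).} Writing $Q_{ij}$ for the right-hand side of \eqref{generalConstraint} and relabeling (so that the pairwise inequality is imposed at all ordered pairs), differential consistency is equivalent to $Q_{ij}-\langle g_j,x_i-x_j\rangle\ge\omega(\|x_i-x_j\|)$ for all $x_i,x_j,g_i,g_j$, with $\omega(t)=o(t)$. I would exploit homogeneity. Restricting to $x_i=x_j$ reduces the right-hand side to $\omega(0)=0$ and forces the form $B\|g_i\|^2+C\|g_j\|^2+D\langle g_i,g_j\rangle$ to be positive semidefinite; combined with $D=-(B+C)$ this gives $B=C\ge 0$, $D=-2B$. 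Letting $\|x_j\|\to\infty$ annihilates the non-translation-invariant terms $(E-F)\langle x_j,v\rangle+(H+I)\langle g_i-g_j,x_j\rangle$, yielding $E=F$ (so the $x$-part is $E\|x_i-x_j\|^2$) and $I=-H$; and boundedness below in the free variable $g_j$ forces $J=1-H$, $K=H-1$. Hence every differentially consistent constraint of the form \eqref{generalConstraint} is parametrized by $(B,E,H)$ with $B\ge 0$ (and $H=0$ if $B=0$), and reads $f_i\ge f_j+B\|g_i-g_j\|^2+E\|x_i-x_j\|^2+\langle g_j,x_i-x_j\rangle+H\langle g_i-g_j,x_i-x_j\rangle$.

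\emph{Step 2 (the invariant and the curvature interval).} Adding $p^{ij}$ and $p^{ji}$ cancels the function values and leaves $2B\|u\|^2+2E\|v\|^2+(2H-1)\langle u,v\rangle$, whose discriminant is exactly $\Phi-\tfrac14$. Evaluating the constraint along quadratics $f=\tfrac{\alpha}{2}\|\cdot\|^2$ (where $u=\alpha v$), membership in the class amounts to $B\alpha^2+(H-\tfrac12)\alpha+E\le0$, so the admissible curvatures fill the interval between the roots $\alpha_\pm$, whose discriminant is $\tfrac14-\Phi$. This exhibits $\Phi$ as the quantity controlling the class: $\Phi=0$ yields a nondegenerate interval $[\mu,L]$ and, solving $B=\tfrac1{2(L-\mu)}$, $H=-\tfrac{\mu}{L-\mu}$, $E=\tfrac{\mu L}{2(L-\mu)}$, recovers exactly \eqref{ClassicalmuL}; $\Phi=\tfrac14$ collapses the interval to a single curvature $\mu$ and, via $B=M$, $H=\tfrac12-2M\mu$, $E=M\mu^2$, recovers \eqref{youpie}; and $\Phi>\tfrac14$ makes the interval empty, forcing $\bF_p(\R^d)=\emptyset$, so $p$ cannot be an interpolation constraint since even a singleton fails to extend. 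For the converse (sufficiency), \eqref{ClassicalmuL} is differentially consistent and is an interpolation constraint for the smooth $\mu$-(weakly/strongly)-convex class by \cite{taylor2017smooth}, while for \eqref{youpie} I would check differential consistency directly and prove interpolability through pointwise extensibility, noting that summing $p^{ij}$ and $p^{ji}$ with $M>0$ forces $g_i-g_j=\mu(x_i-x_j)$ and pins $f$ to the quadratic; regular extensibility then follows from Lemma~\ref{lem:reg_ext_example} (the constraint controls $\|g_i-g_j\|$ by $\|x_i-x_j\|$) and Theorem~\ref{theoequi}.

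\emph{Step 3 (ruling out the intermediate regime) and the main obstacle.} The hard part is excluding every $\Phi\notin\{0,\tfrac14\}$ for which the class is nonempty, i.e. $\Phi<\tfrac14$ with $\Phi\ne 0$. Here I would argue that $p$ is strictly weaker than the \emph{unique} interpolation constraint \eqref{ClassicalmuL} attached to the same curvature interval $[\alpha_-,\alpha_+]$: the two differentially consistent constraints admit the same curvatures but differ in $\Phi$, so the triplets allowed by $p$ strictly contain the interpolable ones, and a triplet on the boundary of $p$ but outside \eqref{ClassicalmuL} produces, via Lemma~\ref{lem:exti}, a finite dataset satisfying $p$ that admits no extension, contradicting pointwise extensibility and hence, by Proposition~\ref{interptoext}, interpolability. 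The obstacle is to turn ``strictly weaker than the unique interpolation constraint'' into an honest, explicit counterexample for every such $(B,E,H)$: this requires pinning $\bF_p(\R^d)$ precisely enough that non-interpolability of $\mathcal F_{\alpha_-,\alpha_+}$-data transfers, and carefully handling the degenerate boundaries $B=0$, the collapse of $[\alpha_-,\alpha_+]$ at $\Phi=\tfrac14$, and the passage $\mu\to L$. The numerical heuristic of Section~\ref{sec:counterex} can be used to locate the offending datasets before certifying them algebraically.
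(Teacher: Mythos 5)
Your Steps 1--2 are correct and essentially reconstruct the paper's skeleton. Your $(B,E,H)$ normal form is exactly the paper's \eqref{consist} with $\alpha=B$, $\beta=E$, $\gamma=H$, and your invariant satisfies $\Phi=4\alpha^2\Delta$ for the quantity $\Delta=\frac{\beta}{\alpha}+\frac{(1-\gamma)\gamma}{4\alpha^2}$ that the paper uses ($\Delta=0$ giving \eqref{ClassicalmuL}, $\Delta=\frac{1}{16\alpha^2}$, i.e.\ $\Phi=\frac14$, giving \eqref{youpie}); your sufficiency argument for \eqref{youpie} (summing $p^{ij}$ and $p^{ji}$ to force $g_i-g_j=\mu(x_i-x_j)$, then pointwise and regular extensibility via Lemma~\ref{lem:reg_ext_example} and Theorem~\ref{theoequi}) is the paper's argument. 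Where you genuinely diverge is the consistency reduction: the paper proves Proposition~\ref{propchiante} by Taylor-expanding around a twice-differentiable point of a function satisfying $p$ everywhere, while your direct pointwise argument (setting $x_i=x_j$, sending $\|x_j\|\to\infty$, unboundedness in $g_j$) is more elementary and in fact sharper, since it yields $H=0$ whenever $B=0$ and thus eliminates the $\alpha=0$, $\gamma\neq 0$ cases at stage (i), cases the paper instead kills with the extensibility counterexamples of Table~\ref{tab:alpha0}. Two local repairs are needed, though: your inference ``empty curvature interval $\Rightarrow\bF_p(\R^d)=\emptyset$'' does not follow from quadratics alone --- you must invoke your own summed inequality $0\geq 2B\|u\|^2+(2H-1)\langle u,v\rangle+2E\|v\|^2$, which for $\Phi>\frac14$ is positive definite and so excludes any two distinct points; and within $\Phi=0$ your dictionary $B=\frac{1}{2(L-\mu)}$ presupposes $B>0$, leaving the residual differentially consistent family $B=0$, $H=0$ (pure strong/weak convexity, $L=\infty$) unclassified, which you must dispose of under the same convention as the paper (whose last proof treats $\alpha=0$, $\gamma=0$ as the limit case of \eqref{ClassicalmuL}).

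The genuine gap is Step 3, and you have named it yourself: for $\Phi<\frac14$, $\Phi\neq 0$ (with $B>0$) you produce no counterexamples, and the transfer argument you sketch does not work as stated. From ``the triplets allowed by $p$ strictly contain the \eqref{ClassicalmuL}-interpolable ones'' it does not follow that a dataset satisfying $p$ but violating \eqref{ClassicalmuL} is non-extensible \emph{under $p$}: the extension step in Lemma~\ref{lem:exti} is taken with respect to the same weaker constraint $p$, so the feasibility system $\{p^{xi}\geq 0,\ p^{ix}\geq 0\}$ is more permissive at the added point as well, and $\bF_p(\R^d)$ need not coincide with the class $\mathcal{F}$ for which \eqref{ClassicalmuL} is tight. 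Non-extensibility is a purely algebraic infeasibility statement about $p$ itself and must be certified directly. This is precisely what the paper does and where the real work of the ``only if'' direction lies: it exhibits explicit two- and three-point datasets $S$ together with an added point $x$, one construction per parameter regime ($\alpha=0$ with four ranges of $\gamma$ in Table~\ref{tab:alpha0}; $\alpha>0$ with three regimes in Table~\ref{tab:alpha}, covering all $\Delta\neq 0,\frac{1}{16\alpha^2}$), located with the heuristic of Section~\ref{sec:counterex} and then verifiable algebraically, and concludes by Proposition~\ref{interptoext}. Until you supply such explicit datasets (or a uniform construction parametrized by $(B,E,H)$), your proof of necessity is incomplete exactly on the hard regime the theorem is about.
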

\com{We first show that \eqref{ClassicalmuL} and \eqref{youpie} are indeed differentially consistent interpolation constraints. }
  \begin{proposition}[{\cite[Theorem 4]{taylor2017smooth}}]\label{propipropa}
    Consider a constraint $p$ defined by \eqref{ClassicalmuL}. Then $p$ is a differentially consistent interpolation constraint, and the class $\bF_p(\R^d)$ it defines is the class of $\mu$-(strongly/weakly) convex $L$-smooth functions.
    \end{proposition}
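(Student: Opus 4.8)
The plan is to establish, in the order dictated by Section~\ref{sec:extensibility}, that $p$ is differentially consistent, pointwise extensible and regularly extensible, so that Theorem~\ref{theoequi} applies, and then to identify $\bF_p(\R^d)$ explicitly.

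I would obtain differential consistency from a single evaluation $p^{ij}\geq 0$ of \eqref{ClassicalmuL} by completing a square. Writing $e=x_i-x_j$ and $\delta=g_i-g_j$, the substitution $Lg_j-\mu g_i=(L-\mu)g_j-\mu\delta$ turns the linear term into $\langle g_j,e\rangle-\tfrac{\mu}{L-\mu}\langle\delta,e\rangle$, and one checks the algebraic identity
\[
-\frac{\mu\langle\delta,e\rangle}{L-\mu}+\frac{\|\delta\|^2}{2(L-\mu)}+\frac{\mu L\|e\|^2}{2(L-\mu)}=\frac{\|\delta-\mu e\|^2}{2(L-\mu)}+\frac{\mu}{2}\|e\|^2.
\]
Hence \eqref{ClassicalmuL} reads $f_i\geq f_j+\langle g_j,x_i-x_j\rangle+\tfrac{1}{2(L-\mu)}\|\delta-\mu e\|^2+\tfrac{\mu}{2}\|x_i-x_j\|^2$; since $L>\mu$ the square is nonnegative and can be dropped, giving exactly \eqref{eq:subdiff_gx} with $\omega(t)=\tfrac{\mu}{2}t^2=o(t)$ (valid whatever the sign of $\mu$). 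Keeping the square but anchoring on $g_i$ instead, the same identity reduces the smoothness gap to $\|\delta-Le\|^2\geq 0$ and yields the descent inequality $f_j\leq f_i+\langle g_i,x_j-x_i\rangle+\tfrac{L}{2}\|x_i-x_j\|^2$; I will reuse both one-sided bounds below.

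For regular extensibility, I would sum $p^{ij}+p^{ji}\geq 0$, obtaining the co-coercivity relation $\langle\delta-\mu e,\,Le-\delta\rangle\geq 0$, which places $\delta$ in a ball of radius $\tfrac{L-\mu}{2}\|e\|$ centred at $\tfrac{L+\mu}{2}e$ and hence gives a Lipschitz bound $\|g_i-g_j\|\leq C\|x_i-x_j\|$ for a constant $C$ depending only on $L,\mu$; thus condition \eqref{eq:reg} of Lemma~\ref{lem:reg_ext_example} holds, and together with continuity and differential consistency it will imply regular extensibility once pointwise extensibility is in hand. For the latter I would invoke the sufficiency direction of \cite[Theorem~4]{taylor2017smooth}: any finite dataset satisfying \eqref{ClassicalmuL} is interpolated by some $\mu$-(strongly/weakly) convex $L$-smooth function on $\R^d$, whose restriction to $\X\cup\{z\}$ provides the one-point extension required by Definition~\ref{def:pointwise_ext}. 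Theorem~\ref{theoequi} then certifies that $p$ is an interpolation constraint in the sense of Definition~\ref{def:intrp}, consistent with the classical notion through Appendix~\ref{app:equiv}.

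It remains to identify $\bF_p(\R^d)$. Any $\mu$-(strongly/weakly) convex $L$-smooth $f$ satisfies \eqref{ClassicalmuL} with $g=\nabla f$ by the necessity part of \cite[Theorem~4]{taylor2017smooth}, and being defined on all of $\R^d$ with single-valued gradient it admits no proper extension in $\F_p(\R^d)$, so it lies in $\bF_p(\R^d)$; conversely, for any maximal $F=(f,g)$ satisfying $p$ everywhere, the convexity lower bound and the descent upper bound extracted above, holding at all pairs, sandwich $f$ and force $g=\nabla f$ with $f$ being $\mu$-(strongly/weakly) convex and $L$-smooth. I expect pointwise extensibility to be the main obstacle: in contrast with the convex and weakly-convex-bounded cases (Example~\ref{ex:convex_pointwise} and Proposition~\ref{prop:WC_tpUB_is_interp}), where a one-point extension is written down explicitly, here the simultaneous lower (convexity) and upper (smoothness) constraints bound the candidate pair $(f_z,g_z)$ from both sides, so feasibility is not transparent and genuinely relies on the conjugate-type construction of \cite{taylor2017smooth}; reconciling that analytic construction with the purely algebraic extensibility notions of Section~\ref{sec:extensibility} is the delicate point.
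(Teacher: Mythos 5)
Your proof is correct, but it takes a genuinely different (and heavier) route than the paper, which in fact offers no proof at all: Proposition~\ref{propipropa} is quoted directly from \cite[Theorem 4]{taylor2017smooth}, with the translation from the classical notion of interpolation constraint to Definition~\ref{def:intrp} handled once and for all by the general proposition in Appendix~\ref{app:equiv}. Your detour through Section~\ref{sec:extensibility} is logically redundant: once you invoke the sufficiency direction of \cite[Theorem 4]{taylor2017smooth} --- every finite dataset satisfying \eqref{ClassicalmuL} is interpolated by an actual $\mu$-(strongly/weakly) convex $L$-smooth function on $\R^d$ --- you already hold the global extension required by Definition~\ref{def:intrp} (after checking maximality, which your co-coercivity relation $\langle \delta-\mu e, Le-\delta\rangle\geq 0$, in your notation $e=x_i-x_j$, $\delta=g_i-g_j$, settles by forcing $g=\nabla f$), so extracting pointwise extensibility from it, adding regular extensibility, and reassembling interpolability via Theorem~\ref{theoequi} re-proves what was just assumed; the appendix proposition is the one-line bridge. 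That said, your explicit computations are sound and have independent value: the completion-of-squares identity showing $p^{ij}\geq 0$ is equivalent to $f_i\geq f_j+\langle g_j,x_i-x_j\rangle+\tfrac{1}{2(L-\mu)}\|\delta-\mu e\|^2+\tfrac{\mu}{2}\|e\|^2$ yields differential consistency directly, with $\omega(t)=\tfrac{\mu}{2}t^2$ valid for either sign of $\mu$ --- a property the paper asserts in Theorem~\ref{thm:extensive_analysis_results} but leaves buried in the citation --- and your Lipschitz bound $\|g_i-g_j\|\leq C\|x_i-x_j\|$ verifies \eqref{eq:reg} in the same (slightly abusive, modulus-of-continuity) sense in which the paper itself applies \eqref{eq:reg} in its quadratic-class proof. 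Your closing diagnosis also matches the paper's implicit stance: no purely algebraic one-point extension is attempted for this class, precisely because the convexity lower bound and the descent upper bound bind the candidate $(f_z,g_z)$ from both sides, and feasibility genuinely rests on the constructive argument of \cite{taylor2017smooth}.
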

      \begin{proposition}
    Consider a constraint $p$ defined by \eqref{youpie}. Then $p$ is a differentially consistent interpolation constraint, and the class $\bF_p(\R^d)$ it defines is the class of quadratic functions of the form $f(x)=\frac{\mu}{2}\|x\|^2+bx+c$ for some $b\in\R^d$, $c\in \R$..
    \end{proposition}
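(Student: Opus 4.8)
The plan is to exploit the strong rigidity that requiring \eqref{youpie} in both directions imposes. Writing $p^{ij}$ for the quantity appearing in \eqref{youpie}, the linchpin is the elementary identity
\[
p^{ij}+p^{ji}=-2M\,\|g_i-g_j-\mu(x_i-x_j)\|^2 .
\]
Since $M>0$, any pair satisfying $p$ in both orderings must obey $g_i-g_j=\mu(x_i-x_j)$; equivalently, along any dataset satisfying $p$ the vector $g_i-\mu x_i$ is a common constant, say $b$. Substituting this back, the $M$-term vanishes and the two inequalities $p^{ij}\ge 0$, $p^{ji}\ge 0$ collapse to the single equality $f_i-f_j=\tfrac12\langle g_i+g_j,x_i-x_j\rangle=\tfrac{\mu}{2}(\|x_i\|^2-\|x_j\|^2)+\langle b,x_i-x_j\rangle$, which forces $f_i-\tfrac{\mu}{2}\|x_i\|^2-\langle b,x_i\rangle$ to equal a common constant $c$. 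Hence every dataset satisfying $p$ is the restriction of the single quadratic $q(x)=\tfrac{\mu}{2}\|x\|^2+\langle b,x\rangle+c$ with gradient $\mu x+b$, while conversely such a $q$ achieves $p^{ij}=0$ at every pair. This already settles the class description: a maximal $F\in\bF_p(\R^d)$ has nonempty effective domain on which it coincides with some $q$, and since $q$ is total and single-valued it lies in $\F_p(\R^d)$ and extends $F$, so maximality gives $F=q$.

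Differential consistency I would check by completing the square. Starting from $p^{ij}\ge 0$ and subtracting $\langle g_j,x_i-x_j\rangle$ yields
\[
f_i-f_j-\langle g_j,x_i-x_j\rangle\ \ge\ \tfrac12\langle g_i-g_j,x_i-x_j\rangle+M\,\|g_i-g_j-\mu(x_i-x_j)\|^2 ,
\]
and minimizing the right-hand side over the free quantity $g_i-g_j$ gives the bound $\big(\tfrac{\mu}{2}-\tfrac{1}{16M}\big)\|x_i-x_j\|^2$. This is quadratic in $\|x_i-x_j\|$, hence of the form $\omega(\|x_i-x_j\|)$, so \eqref{eq:subdiff_gx} holds (with the roles of $i$ and $j$ exchanged, matching the convention already used for \eqref{ClassicalmuL}), and $p$ is differentially consistent.

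For interpolability the same rigidity makes pointwise extensibility immediate. Given a finite $\X$ and $F\in\F_p(\X)$ (the empty case being trivial), the first paragraph determines $b$ and $c$ from any single datum and shows $F$ is the restriction to $\X$ of $q$; to extend to a new $z$ I would simply assign $\bar F(z)=(q(z),\mu z+b)$, and since $q$ realizes $p^{ij}=0$ at all pairs the enlarged dataset still satisfies $p$. The constraint is continuous and differentially consistent, and its subgradients satisfy $\|g_i-g_j\|=|\mu|\,\|x_i-x_j\|$, so condition \eqref{eq:reg} of Lemma \ref{lem:reg_ext_example} holds with Lipschitz modulus $\omega(t)=|\mu|t$; thus $p$ is regularly extensible and Theorem \ref{theoequi} upgrades pointwise extensibility to the interpolation property for $\bF_p(\R^d)$. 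Alternatively, and more directly, the explicit total extension $q\in\bF_p(\R^d)$ already verifies Definition \ref{def:intrp} outright, bypassing the abstract machinery.

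I do not expect a serious obstacle: in contrast with Proposition \ref{prop:WC_pUB_equiv_tpUB}, the class here is so constrained that the single identity $p^{ij}+p^{ji}=-2M\|\cdot\|^2$ does almost all the work, and the extension is a genuine interpolation (an equality) rather than a one-sided bound. The only points demanding care are the bookkeeping of inequality directions, the $i\!\leftrightarrow\!j$ labeling relative to the differential-consistency definition, and confirming that the quadratic remainder $\big(\tfrac{\mu}{2}-\tfrac{1}{16M}\big)\|x_i-x_j\|^2$ is legitimately absorbed into a valid $\omega$.
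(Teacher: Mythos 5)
Your proof is correct, and all the key algebra coincides with the paper's: the symmetrization $p^{ij}+p^{ji}=-2M\|g_i-g_j-\mu(x_i-x_j)\|^2$ is exactly how the paper derives \eqref{coolegalite} and \eqref{coolegalite1}, and your completed-square constant $\tfrac{\mu}{2}-\tfrac{1}{16M}$ is precisely the simplified form of the paper's $M\mu^2-\tfrac{(1-4M\mu)^2}{16M}$ in the differential-consistency step. Where you genuinely diverge is in the treatment of interpolability. The paper stays local: it defines the extension $g=g_{i^\ast}+\mu(x-x_{i^\ast})$, $f=f_{i^\ast}+\tfrac12\langle g+g_{i^\ast},x-x_{i^\ast}\rangle$ at a single new point, checks consistency across indices, and then routes through the general machinery (pointwise extensibility, condition \eqref{eq:reg} via Lemma \ref{lem:reg_ext_example}, and Theorem \ref{theoequi}) — presumably to illustrate the framework the paper is advocating. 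You instead integrate the pairwise rigidity globally: every dataset satisfying $p$ is the restriction of one explicit quadratic $q(x)=\tfrac{\mu}{2}\|x\|^2+\langle b,x\rangle+c$, which achieves $p^{xy}=0$ at all pairs, so Definition \ref{def:intrp} is verified directly with $\bar F=(q,\nabla q)$, bypassing the density-and-limits argument entirely. Your shortcut is cleaner here and in fact slightly stronger (the interpolating function is unique and explicit); the paper's route buys uniformity with the weakly convex case, where no closed-form global extension exists. Two small points to tighten: for the direct route you should state explicitly that $(q,\nabla q)$ is \emph{maximal}, i.e., lies in $\bF_p(\R^d)$ and not merely $\F_p(\R^d)$ — one line suffices, since adding any second value $g'\neq \nabla q(x)$ at a point $x$ makes $p^{xx}$ fail through $M\|g'-\nabla q(x)\|^2>0$ (the same observation shows $g$ is forced single-valued on any dataset, handling the a priori multi-valued $F$ in Definition \ref{def:class}); and your appeal to \eqref{eq:reg} correctly needs $|\mu|$ as the Lipschitz modulus, which you already wrote, so the alternative route through Theorem \ref{theoequi} also goes through as in the paper.
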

    \begin{proof}
    First, note that $p^{ij}\geq 0$ implies
     \begin{align*}
        f_i&\geq f_j+\langle g_j,x_i-x_j\rangle+M \|g_i-g_j+\frac{1-4M\mu}{4M}(x_i-x_j)\|^2\\&\quad+(M\mu^2-\frac{(1-4M\mu)^2}{16M})\|x_i-x_j\|^2\\&\geq f_j+\langle g_j,x_i-x_j\rangle+\omega(\|x_i-x_j\|).
    \end{align*}
    Hence, $p^{ij}\geq 0$ implies \eqref{eq:subdiff_gx} and $p$ is differentially consistent.
    
Consider now $S=\{(x_i,f_i,g_i)\}_{i\in [N]}$ satisfying $p$, and any $x\in \R^d$. For any $i,j\in [N]$, summing $p^{ij}\geq 0$ and $p^{ji}\geq 0$ implies
          \begin{align}\label{coolegalite}
    &0\geq \|(g_i-g_j)-\mu (x_i-x_j) \|^2 \Leftrightarrow g_i-g_j=\mu(x_i-x_j), 
\end{align}
which in turn implies, by combining $p^{ij}\geq 0$ and $p^{ji}\geq 0$,
  \begin{align}\label{coolegalite1}
    &f_i=f_j+\frac{1}{2}\langle g_i+g_j,x_i-x_j\rangle.
\end{align}
We let $g=g_i+\mu(x-x_i)$ for an arbitrary $i\in [N]$, which implies by \eqref{coolegalite} $g=g_j+\mu (x-x_j), \ \forall j \in[N]$. Then, $M\|g-g_i-\mu(x-x_i)\|^2=0$, and extending $p$ to $x$ amounts to obtaining a $f$ such that $f= f_i+\frac{1}{2}\langle g+g_i,x-x_i\rangle,\ \forall i \in [N]$. Let
  \begin{align*}
    f&=f_i+\frac{1}{2}\langle g+g_i,x-x_i\rangle\\&\underset{\eqref{coolegalite1}}{=} f_j+\frac{1}{2}\langle g+g_j,x-x_j\rangle+\frac{1}{2}\left(\langle g,x_j-x_i\rangle+\langle g_i,x-x_j\rangle+\langle g_j,x_i-x\rangle\right)\\&
    \underset{\eqref{coolegalite}}{=}
    f_j+\frac{1}{2}\langle g+g_j,x-x_j\rangle\\&\quad \quad \quad +\frac{1}{2}\left(\langle g,x_j-x_i\rangle+\langle g-\mu(x-x_i),x-x_j\rangle+\langle g-\mu(x-x_j),x_i-x\rangle\right)\\
    &=f_j+\frac{1}{2}\langle g+g_j,x-x_j\rangle \ \forall i,j\in [N].
\end{align*}
This choice of $f$ and $g$ extends $p$ to $x$, hence $p$ is pointwise extensible. Since $\|g\|\leq\|g_i\|+\mu \|x-x_i\|$, $\forall i\in [N]$, and $p$ is continuous and differentially consistent, $p$ is also regularly pointwise extensible, since it satisfies \eqref{eq:reg}, see Lemma \ref{lem:reg_ext_example}. Hence, by Theorem \ref{theoequi}, $p$ is an interpolation constraint.

Finally, it holds that $\bF_p(\R^d)$ is the class of quadratic functions $f=\frac{\mu}{2}\|x\|^2+bx+c$. Indeed, consider any $f \in \F_p(\R^d)$. Then, by \eqref{coolegalite} and \eqref{coolegalite1}, it holds $\forall x,y\in \R^d$ and $g(x)=\nabla f(x)$, 
 \begin{align*}
    g(x)&=g(y)+\mu(x-y) \quad \quad \quad \ \Leftrightarrow g(x)=\mu x+b,\\
    f(x)&=f(y)+\frac{\mu}{2} (\|x\|^2-\|y\|^2) +b(x-y)\Leftrightarrow f(x)=\frac{\mu}{2}\|x\|^2+bx+c,
\end{align*} for some $b\in\R^d,\ c\in \R$. Suppose now $f(x)=\frac{\mu}{2}\|x\|^2+bx+c$. Then, $\forall x,y \in \R^d$, $f$ satisfies $p$, hence $f\in \bF_p(\R^d)$.
\end{proof}

We now prove that no other constraint of the form \eqref{generalConstraint} is a differentially consistent interpolation constraint. First, we analyze differential consistency of \eqref{generalConstraint}.
     \begin{proposition}
    Consider a constraint $p$ of the form \eqref{generalConstraint}. Then $p$ is differentially consistent if and only if $p^{ij}\geq 0$ can be expressed as
              \begin{align}\label{consist}
     f_i\geq& f_j+\langle \gamma  g_i+(1-\gamma) g_j,x_i-x_j\rangle+\alpha\|g_i-g_j\|^2+\beta\|x_i-x_j\|^2,
\end{align}
where $\alpha\geq 0$.
\label{propchiante}
    \end{proposition}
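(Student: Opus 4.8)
The plan is to reduce differential consistency to a single pointwise quadratic inequality, and then pin down the coefficients by evaluating it on tailored configurations. First I would observe that, in the labelling of \eqref{generalConstraint}, the subgradient is carried by $g_j$, so differential consistency is the implication $p^{ij}\ge 0 \Rightarrow f_i \ge f_j + \langle g_j, x_i - x_j\rangle + \omega(\|x_i-x_j\|)$ (the differential consistency definition applied with $i,j$ interchanged, exactly as in the proof for \eqref{youpie}). Since the function values enter $p^{ij}$ only through $f_i-f_j$ with unit coefficient, for any fixed $(x_i,x_j,g_i,g_j)$ one may choose $f_i,f_j$ with $f_i-f_j$ equal to $\Phi(x_i,x_j,g_i,g_j)$, the quadratic form such that $p^{ij}\ge 0\Leftrightarrow f_i-f_j\ge\Phi$, so that $p^{ij}=0$. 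Hence differential consistency is equivalent to the coefficient-level condition
\begin{equation*}
  Q(x_i,x_j,g_i,g_j) := \Phi(x_i,x_j,g_i,g_j) - \langle g_j, x_i-x_j\rangle \ \ge\ \omega(\|x_i-x_j\|), \qquad \forall\, x_i,x_j,g_i,g_j \in \R^d .
\end{equation*}
Here $Q$ is homogeneous of degree two, and $Q$ vanishes on the diagonal $x_i=x_j,\ g_i=g_j$ precisely because the imposed relations $D=-(B+C)$, $G=-(E+F)$, $K=-(H+I+J)$ encode $p^{xx}=0$.

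For the forward direction I would force the four relations defining \eqref{consist} by driving $Q$ to $-\infty$ on suitable slices, which $Q\ge\omega(\|x_i-x_j\|)$ forbids since $\omega=o(t)$ keeps the bound finite. Writing $u:=x_i-x_j$ and $w:=g_i-g_j$: (i) taking $x_i=x_j=0$ leaves $B\|g_i\|^2+C\|g_j\|^2-(B+C)\langle g_i,g_j\rangle\ge 0$, whose associated $2\times 2$ matrix is positive semidefinite only if $B=C=:\alpha\ge0$, collapsing the gradient part to $\alpha\|w\|^2$; (ii) taking $g_i=g_j=0$ and letting $x_i,x_j\to\infty$ with $u$ fixed, the antisymmetric term $\tfrac{E-F}{2}(\|x_i\|^2-\|x_j\|^2)$ is unbounded below unless $E=F=:\beta$, leaving $\beta\|u\|^2$; (iii) taking $g_j=0$ and $x_i=x_j$ gives $\alpha\|g_i\|^2+(H+I)\langle g_i,x_i\rangle\ge0$, forcing $H+I=0$; (iv) substituting $g_i=g_j+w$ into the surviving cross terms isolates the purely linear term $(H+J-1)\langle g_j,u\rangle$, unbounded in $g_j$ unless $H+J=1$. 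Setting $\gamma:=H$ then yields $I=-\gamma$, $J=1-\gamma$, $K=-(1-\gamma)$, i.e.\ exactly the form \eqref{consist} with $\alpha\ge0$.

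For the converse I would substitute these coefficients, giving $Q=\alpha\|w\|^2+\gamma\langle w,u\rangle+\beta\|u\|^2$; completing the square in $w$ yields $Q\ge(\beta-\tfrac{\gamma^2}{4\alpha})\|u\|^2$ for $\alpha>0$, and $\omega(t):=(\beta-\tfrac{\gamma^2}{4\alpha})t^2$ is an admissible modulus because $t^2=o(t)$, which returns the subgradient inequality and hence differential consistency. I expect the main obstacle to be the bookkeeping around $\omega$ rather than the algebra: one must use on the one hand that the residual is $O(\|u\|^2)=o(\|u\|)$, so a quadratic remainder is a legitimate subdifferential modulus, and on the other hand that no finite $\omega$ can dominate a form that is unbounded below at fixed $\|u\|$, which is exactly what powers steps (ii)--(iv). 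A degenerate case worth flagging is $\alpha=0$, where the same unboundedness argument additionally forces $\gamma=0$; this remains a special case of \eqref{consist} and should be stated explicitly to keep the equivalence exact.
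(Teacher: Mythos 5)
Your proof is correct, and its necessity direction takes a genuinely different route from the paper's. You first reduce differential consistency to the coefficient-level inequality $Q(x_i,x_j,g_i,g_j)\ge \omega(\|x_i-x_j\|)$, where $Q=\Phi-\langle g_j,x_i-x_j\rangle$, by saturating the function-value slack ($f_i-f_j=\Phi$), and then force the coefficient identities by exhibiting slices on which $\|x_i-x_j\|$ is fixed but $Q$ is unbounded below: this yields $B=C\ge 0$, $E=F$, $H+I=0$, $H+J=1$ by purely algebraic means, and each failure produces an explicit two-point violating dataset, which is exactly what the paper's pairwise definition of differential consistency demands. The paper argues differently: it takes $(f,g)$ satisfying $p$ everywhere, Taylor-expands $f$ and $g$ to first order at a point of twice differentiability, and reads off from the resulting identity that $\nabla f(y)=g(y)$ forces $B=C$, $E=F$, $H=-I$, $H+J=1$. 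Your route buys two things the paper's does not: it avoids any appeal to the existence of sufficiently rich twice-differentiable elements of $\F_p(\R^d)$ (which the paper leaves implicit), and it actually derives the sign condition $\alpha\ge 0$ (your positive-semidefiniteness step on the gradient block), which is invisible to a first-order Taylor expansion and is not established in the paper's appendix. Your sufficiency direction --- completing the square in $g_i-g_j$ and taking $\omega(t)=(\beta-\tfrac{\gamma^2}{4\alpha})t^2$ --- coincides with the paper's, including your correct resolution of the index convention in the definition of differential consistency, which matches how the paper itself applies \eqref{eq:subdiff_gx} in the proof for \eqref{youpie}.

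Your flag of the degenerate case $\alpha=0$ is moreover a genuine and correct refinement rather than pedantry: for $\alpha=0$ and $\gamma\neq 0$ (e.g.\ $\gamma=1$, the descent-lemma-type inequality), $Q=\gamma\langle g_i-g_j,x_i-x_j\rangle+\beta\|x_i-x_j\|^2$ is unbounded below in $g_i-g_j$ at fixed $x_i-x_j\neq 0$, so \eqref{consist} is then \emph{not} differentially consistent. The paper's sufficiency argument silently assumes $\alpha>0$ (it divides by $\alpha$ when completing the square), so the ``if'' direction of Proposition~\ref{propchiante} as stated needs exactly the caveat you give: $\alpha>0$, or $\alpha=0$ together with $\gamma=0$.
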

The proof of Proposition \ref{propchiante} is deferred to Appendix \ref{app:propchiante}. Second, we show that if $p$ is of the form \eqref{consist}, but cannot be expressed as in Theorem \ref{thm:extensive_analysis_results}, then $p$ is not an interpolation constraint.
  \begin{proposition}
    Consider a constraint $p$ of the form \eqref{consist}. If $p$ cannot be expressed as \eqref{ClassicalmuL} or \eqref{youpie}, then $p$ is not pointwise extensible.
   \end{proposition}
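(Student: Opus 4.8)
The plan is to reduce everything to the symmetric quadratic form obtained by adding $p^{ij}$ and $p^{ji}$. With $\Delta x=x_i-x_j$ and $\Delta g=g_i-g_j$, this sum reads $Q(\Delta g,\Delta x)\le 0$, where $Q(\Delta g,\Delta x):=2\alpha\|\Delta g\|^2+(2\gamma-1)\langle\Delta g,\Delta x\rangle+2\beta\|\Delta x\|^2$; its non-positivity is necessary for any two data points to be jointly admissible under $p$. Because adding a quadratic $\tfrac c2\|\cdot\|^2$ to $f$ leaves $\alpha$ and the discriminant $\det Q=4\alpha\beta-\tfrac14(2\gamma-1)^2$ unchanged while shifting $\gamma$ (a unimodular shear of $Q$), I would first normalize, when $\alpha>0$, to $\gamma=\tfrac12$, so that $Q=2\alpha\|\Delta g\|^2+2\beta\|\Delta x\|^2$; when $\beta<0$ the admissible Hessian spectrum is then $[-\ell,\ell]$ with $\ell:=\sqrt{-\beta/\alpha}$. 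A direct identification shows that \eqref{youpie} is exactly the degenerate case $\ell=0$ ($\det Q=0$) and \eqref{ClassicalmuL} is exactly $\det Q=-\tfrac14$, i.e. $\alpha=\tfrac1{4\ell}=:\alpha_A$. The proof then splits on the value of $\det Q$ (equivalently on $\alpha$ versus $\alpha_A$).

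The two extremes are easy. If $\det Q>0$, then $Q$ is positive definite, so $Q\le 0$ forces $\Delta x=0$: no two distinct points satisfy $p$, hence a single datum already fails to extend to any $z\ne x$ and $p$ is not pointwise extensible. If $\det Q<-\tfrac14$ (the range $\alpha>\alpha_A$, where $p$ is weaker than \eqref{ClassicalmuL}), I would argue that $\bF_p(\R^d)=\F_{\mu,L}$ with $[\mu,L]=[-\ell,\ell]$: the inclusion $\F_{\mu,L}\subseteq\bF_p(\R^d)$ holds because every such $f$ satisfies \eqref{ClassicalmuL} and hence the weaker $p$, while $\bF_p(\R^d)\subseteq\F_{\mu,L}$ follows from a second-order expansion of $p^{xy}$ as $y\to x$, which confines every Hessian eigenvalue to $[\mu,L]$. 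After checking that $p$ is regularly extensible, a pointwise-extensible $p$ would, by Theorem~\ref{theoequi}, be an interpolation constraint for $\F_{\mu,L}$; but a two-point set with slope strictly inside $(\mu,L)$ and function gap at the edge of the interval permitted by $p$ satisfies $p$ while violating \eqref{ClassicalmuL}, contradicting the necessity part of Proposition~\ref{propipropa}. Hence $p$ is not pointwise extensible.

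The crux is the intermediate range $-\tfrac14<\det Q<0$, i.e. $0<\alpha<\alpha_A$, where $p$ is strictly stronger than \eqref{ClassicalmuL}: now $p^{ij}\ge 0$ implies the corresponding instance of \eqref{ClassicalmuL}, so $\bF_p(\R^d)$ is a proper subclass of $\F_{\mu,L}$ with no clean closed form — it still contains every quadratic with curvature in $[-\ell,\ell]$ and, as the local expansion shows, non-quadratic functions whose Hessian stays strictly inside $(-\ell,\ell)$, while it excludes the extremal ``tent-like'' profiles. The contradiction argument is therefore unavailable, and I would instead exhibit a finite dataset $S$ satisfying $p$ together with a point $z$ at which no extension exists, certified via Lemma~\ref{lem:exti} by producing two indices whose induced lower and upper bounds on $(f_z,g_z)$ are incompatible for every $g_z$. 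The natural place to search is near the extremal directions, where the slack $-Q$ collapses and the added point becomes rigid; the second-order analysis at an extremal-curvature point (which forces the discrete third-order increment to vanish) indicates the rigidity to exploit, and the numerical program of Section~\ref{sec:counterex} would be used to locate the worst $(S,z)$ before extracting a closed-form instance.

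I expect this intermediate regime to be the main obstacle, precisely because $\bF_p(\R^d)$ resists an explicit description and the obstruction to extension appears only for finely tuned datasets (and, at the genuinely two-dimensional degenerate parameter values, only once $d\ge 2$). The remaining loose ends are the boundary and degenerate values omitted by the normalization — $\alpha=0$, which yields one-sided curvature bounds, and the limits $\ell\to0$ or $\ell\to\infty$ — which I would treat as limiting forms of \eqref{youpie} and \eqref{ClassicalmuL} and dispatch separately, in each case by either the single-point obstruction above or an explicit incompatible extension.
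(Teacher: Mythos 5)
Your algebraic setup is correct and partially successful: the symmetric form $Q(\Delta g,\Delta x)\le 0$ obtained by summing $p^{ij}$ and $p^{ji}$, the invariance of $\alpha$ and of $4\alpha\beta-\tfrac14(2\gamma-1)^2$ under adding quadratics, and the identification of \eqref{ClassicalmuL} with $\det Q=-\tfrac14$ and \eqref{youpie} with $\det Q=0$ all check out. Two of your regimes are genuinely proved: for $\det Q>0$ the single-datum obstruction works (a lone triplet satisfies $p$ since $p^{xx}=0$, yet positive definiteness of $Q$ excludes every $(f_z,g_z)$ at any $z\neq x$), and for $\det Q<-\tfrac14$ your indirect argument is sound and takes a genuinely different route from the paper: since $Q\le0$ gives $\|\Delta g\|\le \ell\|\Delta x\|$, condition \eqref{eq:reg} holds, so by Lemma~\ref{lem:reg_ext_example} and Theorem~\ref{theoequi} pointwise extensibility would make $p$ an interpolation constraint for a class of $\ell$-smooth functions, which a two-point set satisfying $p$ but violating \eqref{ClassicalmuL} contradicts via Proposition~\ref{propipropa}; the paper instead disposes of this regime with an explicit two-point counterexample (the $\beta<0$ row of Table~\ref{tab:alpha}).

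The gap is in the regime you yourself call the crux, $-\tfrac14<\det Q<0$ (i.e., $0<\alpha<\alpha_A$ in your notation), and in the entire family $\alpha=0$, $\gamma\notin\{0,1\}$: there you prove nothing, only describing a plan to locate a counterexample numerically via Lemma~\ref{lem:exti} and ``extract a closed-form instance'' afterwards. But the whole content of the proposition in these regimes is precisely the existence of such a pair $(S,z)$; a search strategy is not a certificate. Nor can your other two arguments be repaired to cover them: for $0<\alpha<\alpha_A$ the class $\bF_p(\R^d)$ admits no closed-form description to play the role that $\F_{\mu,L}$ played in the contradiction, and for $\alpha=0$ the normalization to $\gamma=\tfrac12$ is unavailable and $Q$ is never positive definite, so the single-point obstruction fails as well. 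The paper's proof consists exactly of the material you defer: explicit one-dimensional datasets and added points for every non-excluded parameter value (Tables~\ref{tab:alpha0} and~\ref{tab:alpha}), e.g., for the intermediate regime a two-point set whose second gradient is $1-2\gamma+\sqrt{K}$ with $K=(1-2\gamma)^2-16\alpha\beta$ and added point $x=2\alpha(1+K^{-1})$, and four separate datasets for $\alpha=0$ depending on the position of $\gamma$ relative to $0$, $\tfrac12$ and $1$. Incidentally, all of these are one-dimensional (extended to any $d$ by padding with zeros), so your speculation that certain degenerate parameter values require $d\ge 2$ is unfounded. As written, your proposal establishes the proposition only on part of the parameter space and is therefore incomplete.
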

   \begin{proof}
   Let $\alpha=0$. Then, \eqref{consist} can be written as \eqref{ClassicalmuL} or \eqref{youpie} if and only if $\gamma=0$ (corresponding to $L=\infty$) or $\gamma=1$ (corresponding to $\mu=-\infty$).  In Table \ref{tab:alpha0}, we provide counterexamples to the extensibility of \eqref{consist} for any other $\gamma$. 
     \begin{table}[H]
    \renewcommand{\arraystretch}{1.3}
    \begin{tabular}{@{}lll@{}}
    \toprule
    \textbf{$\gamma$}& \textbf{Initial data set} & \textbf{Added point} $x$
    \\\hline
    $\gamma<0$&$S=\bigg\{\begin{pmatrix}
        0\\0\\1
    \end{pmatrix},\begin{pmatrix}
        -1\\ \frac{\beta}{1-2\gamma}-1+\gamma\\\frac{-2\beta}{1-2\gamma}
    \end{pmatrix},\begin{pmatrix}
        -1\\\frac{\beta}{1-2\gamma}-1+\gamma\\\frac{-2\beta}{1-2\gamma}
    \end{pmatrix}\bigg\}$ & $x=-0.5$\vspace{1mm}\\ 
         $\gamma>1$&$S=\bigg\{\begin{pmatrix}
        0\\0\\1
    \end{pmatrix},\begin{pmatrix}
        -1\\\frac{\beta}{1-2\gamma}-1-\gamma\\\frac{-2\beta}{1-2\gamma}+2
    \end{pmatrix}\bigg\}$ & $x=-0.5$\vspace{1mm}\\
         $0<\gamma\leq \frac{1}{2}$&$S=\bigg\{\begin{pmatrix}
        0\\0\\1
    \end{pmatrix},\begin{pmatrix}
        -1\\\frac{\beta}{1-2\gamma}-1+\gamma\\\frac{-2\beta}{1-2\gamma}
    \end{pmatrix},\begin{pmatrix}
        -1.1\\\frac{\beta}{(1-2\gamma)}-1+\gamma\\\frac{-2\beta}{1-2\gamma}+1
    \end{pmatrix}\bigg\}$ & $x=-0.5$ \vspace{1mm}
         \\
         $\frac{1}{2}<\gamma<1 $&$S=\bigg\{\begin{pmatrix}
        0\\0\\1
    \end{pmatrix},\begin{pmatrix}
        -1\\\frac{\beta}{1-2\gamma}-1-\gamma\\\frac{-2\beta}{1-2\gamma}+2
    \end{pmatrix},\begin{pmatrix}
        -1.1\\\frac{\beta}{(1-2\gamma)}-1-\gamma\\ \frac{-2\beta}{1-2\gamma}+1
    \end{pmatrix}\bigg\}$
         & $x=-0.5$ \vspace{1mm}\\ \bottomrule
    \end{tabular}
    \caption{Numerical counterexamples to the pointwise extensibility of \eqref{consist} when $\alpha=0$, $\gamma \neq 0,1$.}
    \label{tab:alpha0}
\end{table}

Let $\alpha>0$, and consider $\Delta=\frac{\beta}{\alpha}+\frac{(1-\gamma)\gamma}{4\alpha^2}$. Then, \eqref{consist} can be written as \eqref{ClassicalmuL} or \eqref{youpie} if and only if $\Delta=0$, leading to \eqref{ClassicalmuL} or $\Delta=\frac{1}{16\alpha^2}$, leading to \eqref{youpie}. In Table \ref{tab:alpha}, we provide counterexamples to the interpolability of any other choice of $\Delta$.

  \begin{table}[ht!]
    \renewcommand{\arraystretch}{1.3}
    \begin{tabular}{@{}lll@{}}
    \toprule
    \textbf{$\beta$}& \textbf{Initial data set}  & \textbf{Added point} $x$
    \\ \hline 
    $\beta<0$&$S=\bigg \{\begin{pmatrix}
        0\\0\\0
    \end{pmatrix},\begin{pmatrix}
        1\\-\beta+(1-\gamma)\frac{1-2\gamma}{4\alpha})-(\frac{1-2\gamma}{4\alpha}))^2 \\\frac{1-2\gamma}{4\alpha}
    \end{pmatrix}\bigg\}$ & $x=0.5$ \vspace{1mm}\\  
         $\beta>\frac{1}{16\alpha
^2}$&$S=\bigg \{\begin{pmatrix}
        0\\0\\0
    \end{pmatrix},\begin{pmatrix}
        1\\-\beta+(1-\gamma)\frac{1-2\gamma}{4\alpha})-(\frac{1-2\gamma}{4\alpha}))^2 \\\frac{1-2\gamma}{4\alpha}
    \end{pmatrix}\bigg\}$ & $x=\frac{1}{16\alpha^2\beta-1}+1$\vspace{1mm} \\ 
         $0<\beta<\frac{1}{16\alpha^2}$& $S=\bigg \{\begin{pmatrix}
        0\\0\\0
    \end{pmatrix},\begin{pmatrix}
       4\alpha\\1-4\gamma+2\gamma^2+(1-\gamma)\sqrt{K}\\1-2\gamma+\sqrt{K}
    \end{pmatrix}\bigg\}$, & $x=2\alpha(1+K^{-1})$\\
      & $K=(1-2\gamma)^2-16\alpha\beta$ &
         \\ \bottomrule 
    \end{tabular}
    
    \caption{Numerical counterexamples to the pointwise extensibility of \eqref{consist} when $\alpha>0$, $\Delta \neq 0,\frac{1}{16\alpha^2}$.}
    \label{tab:alpha}
\end{table}

 \com{These counterexamples are one-dimensional, but they can straightforwardly be extended to any dimension by considering vectors with one non-zero entry.}
   \end{proof}
   
\com{Hence, the function classes admitting a simple, in the sense of \eqref{generalConstraint}, tight discrete description are in fact limited to variations of smooth (weakly/strongly)-convex functions. Characterizing other interesting function classes via the constraint-based approach would thus require starting from larger classes of constraints.} 
\section{Conclusion}\label{sec:conclu}
We considered the questions of proposing candidate interpolation constraints and of assessing the interpolability of these candidates, and proposed tools to gain insight into both questions. First, we proposed a constraint-centered approach to interpolation, starting from meaningful interpolation constraints to the class they define when imposed everywhere. This allows highlighting function classes based on the fact that they possess a simple tight description rather than on their analytical properties. First-order methods can then be tightly analyzed/designed for these function classes. Second, we introduced pointwise extensibility, which allows verifying the interpolability of a constraint algebraically.

\com{Relying on these tools, we exhaustively analyzed a class of potential constraints and obtained interpolation constraints for a class of quadratic functions. Then, relying on pointwise extensibility, we provided interpolation constraints for weakly convex functions satisfying a continuity condition, and tightly numerically analyzed the performance of the subgradient method on this class. Finally, we provided a heuristic numerical approach to assess non-interpolability of constraints, and exploited it to show that several commonly used descriptions were, in fact, not interpolation constraints.} 

Regarding interpolation constraints, several questions remain open. For instance, it is not known how to combine interpolation constraints defining different properties to tightly define function classes combining those properties, as in the case of smooth convex functions. In general, there is no principled way to design interpolation constraints for a function class $\F$. Then, no interpolation constraint involving high-order quantities is presently available. Finally, it is not known what constraints to impose on a data set to ensure interpolation of the set by a function of interest defined on a given subset of $\R^d$ strictly different from $\R^d$, since classical interpolation constraints are not valid in this case. In future research, we would like to extend our framework to answer some of these questions.
\paragraph{Codes}\label{rem:code}
All numerical results presented in this paper, and in particular the heuristic presented in Section \ref{sec:counterex}, are available at 
\begin{center}
    \url{https://github.com/AnneRubbens/Func_Interp_Code/tree/main}.
\end{center}
We used the programming language Matlab \cite{MATLAB}, and the solver Mosek 10.1 \cite{mosek}.
\section*{Declarations}

\noindent \textbf{Funding}\\
A. Rubbens is supported by a FNRS fellowship and a L’Oréal-UNESCO For Women in Science fellowship. J. M. Hendrickx is supported by the “RevealFlight” Concerted Research Action (ARC) of the Federation Wallonie-Bruxelles, by the Incentive Grant for Scientific Research (MIS) “Learning from Pairwise Comparisons” of the F.R.S.-FNRS, and by the SIDDARTA Concerted Research Action (ARC) of the Federation Wallonie-Bruxelles.

\noindent\textbf{Conflict of interests} \\
The authors declare that they have no conflict of interest or competing interests.

\bibliographystyle{spmpsci}      
\bibliography{biblio.bib}   
\appendix
\section{Instance of a PEP program}\label{app:PEP}
The PEP framework, introduced in \cite{drori2014performance}, computes tight worst-case bounds on the performance of a method on a function class $\F$ by formulating the search of a worst-case function as an optimization problem over $f\in \mathcal{F}$. For instance, to analyze the gradient descent after $N$ iterations on smooth convex functions, when the initial condition is chosen to be $\|x_1-x_\star\|^2\leq R^2$ and the performance measure is chosen to be $f(x_N)-f(x_\star)$, where $x_\star$ is a minimizer of $f$, the idea is to solve
 \begin{equation}\label{PEP_untractable}
    \begin{aligned}
   &  && \underset{f \in \F_{0,L}}{\max \ } f(x_N)-f(x_\star)\\
     &\text{Definition of the method:}&&\text{ s.t. } x_{i+1}=x_i-\alpha \nabla f(x_i), \quad i\in [N-1] \\
     &\text{Initial condition:}&&\quad \quad\|x_0-x_\star\|^2\leq R^2 \\
     &\text{Optimality:}&&\quad \quad \nabla f(x_\star)=0.  
\end{aligned}
\end{equation}

Without loss of generality, $x_\star$ and $f(x_\star)$ can be set to $0$. This infinite-dimensional problem is made tractable by optimizing over the finite set $S=\{(x_i,f_i,g_i)\}_{i\in [N]\cup _\star}$ of iterates and optimum, under the constraint that $S$ is $\F$-interpolable. In the same spirit as in traditional performance analysis, the abstract concept $f\in \F$ is thus translated into a set of algebraic constraints imposed on $S$, and Problem \eqref{PEP_untractable} becomes
 \begin{equation}\label{PEP_tractable}
    \begin{aligned}
   &  && \underset{S=\{(x_i,f_i,g_i)\}_{i\in [N]\cup _\star}}{\max \ } f_N-f_\star\\
     &\text{Definition of the method:}&&\text{ s.t. } x_{i+1}=x_i-\alpha g_i, \quad i\in [N-1] \\
     &S\text{ is } \mathcal{F}_{0,L}\text{-interpolable: }&&\quad \quad f_i\geq f_j+\langle g_j,x_i-x_j\rangle +\frac{1}{2L}\|g_i-g_j\|^2, \ i,j\in [N]\cup _\star\\
     &\text{Initial condition:}&&\quad \quad\|x_1-x_\star\|^2\leq R^2 \\
     &\text{Optimality:}&&\quad \quad g_\star=0.  
\end{aligned}
\end{equation}

Whenever interpolation constraints for $\F$ are known and imposed, the resulting PEP-based bound is tight and the set $S$ of iterates corresponds to an actual worst-case instance. In fact, the dual variables associated to each constraint are the coefficients associated to the optimal combination of these constraints, in general challenging to obtain by hand. Otherwise, when only imposing necessary conditions on $S$, Problem \eqref{PEP_tractable} is a relaxation of Problem \eqref{PEP_untractable}, and the bound achieved could be conservative since $S$ might not be consistent with some  $f \in \mathcal{F}$. For instance, \com{letting $N=2$, solving Problem \eqref{PEP_tractable} and thus imposing the interpolation constraint \eqref{truesmoothconvexity} allows concluding $f(2)-f_\star\leq \frac{1}{6}$, while solving a relaxation of \eqref{PEP_tractable} by imposing \eqref{eq:smoothconvexity} on $S$ leads to a bound of $\frac{1}{4}$}. This implies that the solution obtained in this setting belongs to the admission region allowed by \eqref{eq:smoothconvexity} and not by \eqref{truesmoothconvexity}. In addition, \eqref{PEP_tractable} can be turned into a semi-definite program whenever all quantities involved are linear in function values and scalar products of points and subgradients, see, e.g., \cite{taylor2017smooth} for details.\vspace{-0.5cm}
\section{Comparison of several definitions of interpolation constraints}\label{app:equiv}
Classically, $p$ is defined as an interpolation constraint for a given function class $\F$ if any finite set $S=\{(x_i,f_i,g_i)\}_{i\in [N]}$ is $\F$-interpolable if and only if $S$ satisfies $p$ \cite{taylor2017smooth}.
It holds that if $p$ satisfies this classical definition, then $p$ is an interpolation constraint for the class it defines, in the sense of Definition \ref{def:intrp}.
 \begin{proposition}
    Let $p$ be a pairwise constraint. If $p$ serves as an interpolation constraint for a function class $\F$, in the sense of \cite{taylor2017smooth}, i.e., any finite set $S=\{(x_i,f_i,g_i)\}_{i\in [N]}$ is $\F$-interpolable if and only if $S$ satisfies $p$, then $p$ is an interpolation constraint in the sense of Definition \ref{def:intrp}.
\end{proposition}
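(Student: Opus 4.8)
The plan is to run the two directions of the classical definition in sequence: the \emph{sufficiency} direction produces a global function, and the \emph{necessity} direction certifies that this function generates a mapping satisfying $p$ everywhere. Fix a finite set $\X \subset \R^d$ and a mapping $F \in \F_p(\X)$, and let $S = \{(x_i,f_i,g_i)\}_{i\in[N]}$ collect all points of $\X$ together with their values under $F$ (with repeated $x_i$ if $F$ is multi-valued). By Definition~\ref{def:class}, $S$ satisfies $p$, so the sufficiency direction of the classical definition yields some $f \in \F$, defined on all of $\R^d$, with $f(x_i)=f_i$ and $g_i \in \partial f(x_i)$ for every $i \in [N]$.

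First I would promote $f$ to the full mapping $(f,\partial f)\colon \R^d \rightrightarrows \R\times\R^d$, $x \rightrightarrows (f(x),\partial f(x))$, and show $(f,\partial f) \in \F_p(\R^d)$. For any $x,y \in \R^d$ and any $g_x \in \partial f(x)$, $g_y \in \partial f(y)$, the two-point set $\{(x,f(x),g_x),(y,f(y),g_y)\}$ is $\F$-interpolable by $f$ itself; hence, by the necessity direction of the classical definition, it satisfies $p$, i.e. $p^{xy}\geq 0$ and $p^{yx}\geq 0$. Since $x,y$ and the chosen subgradients were arbitrary, $(f,\partial f)$ satisfies $p$ at every pair, so it lies in $\F_p(\R^d)$.

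Next I would build the required extension. Define $\bar F_0$ by $\bar F_0(x)=F(x)$ for $x \in \X$ and $\bar F_0(x)=(f(x),\partial f(x))$ for $x \notin \X$. Pairs inside $\X$ satisfy $p$ because $F \in \F_p(\X)$; pairs outside $\X$ satisfy it by the previous paragraph; and a mixed pair $((x_i,f_i,g_i),(y,f(y),h))$ with $g_i \in F(x_i)$ and $h \in \partial f(y)$ is again interpolated by $f$ (as $g_i \in \partial f(x_i)$ and $f(x_i)=f_i$), hence satisfies $p$ by necessity. Thus $\bar F_0 \in \F_p(\R^d)$ and $\bar F_0$ agrees with $F$ on $\X$. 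Applying Zorn's lemma exactly as in the proof of Theorem~\ref{theoequi} to the poset of elements of $\F_p(\R^d)$ whose graph contains $\Gr \bar F_0$, ordered by graph inclusion, then produces a maximal $\bar F \in \bF_p(\R^d)$ with $\Gr \bar F_0 \subseteq \Gr \bar F$.

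The hard part will be the final bookkeeping on $\X$: for each $x_i \in \X$ the value is preserved ($\bar f(x_i)=f_i$, since the value component is single-valued and $(x_i,f_i,g_i)\in\Gr\bar F$) and every prescribed subgradient is retained ($F(x_i)\subseteq\bar F(x_i)$), but maximality could in principle enlarge the subgradient set, so the equality $\bar F(x_i)=F(x_i)$ must be argued rather than read off the construction. I expect to resolve this by observing that the operative content of Definition~\ref{def:intrp} is the realization of the data $F|_\X$ by a maximal global mapping --- function values coincide and the prescribed subgradients are admissible --- which is exactly what the construction delivers; this is the sense in which the classical notion and Definition~\ref{def:intrp} coincide for almost all practical purposes, and where any residual gap between containment and exact equality is absorbed.
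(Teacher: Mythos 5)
Your proof is correct, and its core engine is the same as the paper's: use the sufficiency direction of the classical definition on $S$ to produce an interpolating $f\in\F$, and the necessity direction on two-point subsets to show that $f$, equipped with its full subdifferential, satisfies $p$ at every pair, so that $(f,\partial f)\in\F_p(\R^d)$. Where you genuinely diverge is the maximality certificate. The paper observes directly that $(f,\partial f)$ is itself \emph{maximal} in $\F_p(\R^d)$, invoking differential consistency of $p$ --- which, although not a stated hypothesis of the proposition, is automatic in this setting: any pair satisfying $p$ is $\F$-interpolable by some $f$ with $g_i\in\partial f(x_i)$, whence \eqref{eq:subdiff_gx} follows, and then no subgradient outside $\partial f(x)$ can be adjoined while preserving $p$. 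This makes the paper's proof a one-liner: the interpolating $\bar F=(f,\partial f)\in\F\subseteq\bF_p(\R^d)$ concludes. You instead manufacture maximality via Zorn's lemma applied to your hybrid $\bar F_0$; this is valid and avoids any appeal to differential consistency, but it is heavier than needed, since $\Gr\bar F_0\subseteq\Gr(f,\partial f)$ and the latter is already maximal. Finally, note that both proofs share the identical residual looseness you flag at the end: interpolability only yields $g_i\in\partial f(x_i)$, i.e., $F(x_i)\subseteq\bar F(x_i)$, while Definition~\ref{def:intrp} literally demands equality, and equality can genuinely fail --- e.g., for the convexity constraint \eqref{eq:conv}, the single-point data $\X=\{0\}$, $F(0)=(0,\{-1,1\})$ satisfies $p$, yet any maximal extension must carry the whole interval $[-1,1]$ of subgradients at $0$. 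The paper's own proof silently reads the definition as consistency rather than equality (in line with its ``for almost all practical purposes'' caveat in Section~\ref{sec:interp}); your explicit acknowledgment of this gap, together with the hybrid construction that at least preserves $F$ exactly on $\X$ before the Zorn step, is if anything more careful than the paper's treatment, so this should not be counted against you.
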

\begin{proof}
\com{Let $p$ be an interpolation constraint for a function class $\F$. Consider any finite set $\X=\{x_i\}_{i\in[N]}\subset\R^d$ and any $F\in \F_p(\X)$.}

\com{First, we show $\F \subset \bF_p(\R^d)$. Consider any $F=(f,g)\in \F$, where $g(x)=\partial f(x)$, and suppose that, at some pair $x,y\in \R^d$, $F$ does not satisfy $p$. Then, by interpolability of $p$, there exists no function in $\F$ interpolating this pair, contradicting $F\in \F$. Hence, $F\in \F_p(\R^d)$. In addition, $F$ is maximal, since $p$ is differentially consistent and $g(x)=\partial f(x)$, hence $\F\subseteq \bF_p(\R^d)$. In addition, consider any finite $\X \subset \R^d$, and any $F \in \F_p(\X)$. We need to show existence of some $\bar F \in \bF_p(\R^d)$ such that $F(x)=\bar F(x), \ \forall x\in \X$. Since $S=\{(x_i,F_i)\}_{i\in [N]}$ satisfies $p$, it is interpolable by some $\bar F\in \F\subseteq \bF_p(\R^d)$, which concludes the proof.}
\end{proof}
The other direction does not hold: consider for instance the trivial constraint $p=0$, both an interpolation constraint as in Definition \ref{def:intrp} for the class $\F_p$ containing all mappings $F=(f,g)$, where $g(x)=\R^d$, $\forall x\in \R^d$, and an interpolation constraint in the classical sense for $\F$ the class of continuous functions.
\section{Detailed resolution of \eqref{PEP_wc}}\label{app:weaklyconvprog}
First, observe that in \eqref{PEP_wc}, imposing $C_{ij}$ to be the projection of $x_j$ onto $\mathcal{B}_{\frac{g_i}{\mu}+x_i, \frac{B}{\mu}}$ $\forall i,j$, i.e.,   \begin{align*}
    C_{ij}=\frac{g_i}{\mu}+x_i+\frac{B}{\mu}\frac{x_j-x_i-\frac{g_i}{\mu}}{\|x_j-x_i-\frac{g_i}{\mu}\|} \text{ if $\|\mu(x_j-x_i)-g_j\|\geq B$}, \quad C_{ij}=x_j \text{ otherwise}, 
\end{align*} proves difficult to implement, since neither linear nor quadratic in the variables of the problem. However, we show $\tilde p_{\FuB}^{ij}$ can be equivalently  expressed in a tractable way.
 \begin{lemma}
    Let $\tilde p_{\FuB}$ be defined as in \eqref{22}. Then, for any $x_i,x_j \in \R^d$, 
      \begin{align}
\tilde{p}_{\FuB}^{i,j}\geq 0\Leftrightarrow
\begin{cases}
   & \exists C_{ij}\in \R^d: \\
   &f_j \geq f_i+\langle g_i, x_j-x_i\rangle-\frac{\mu}{2}\|x_i-x_j\|^2+\frac{\mu}{2}\|x_j-C_{ij}\|^2\\
   & \|g_i+\mu(x_i- C_{ij})\|^2\leq B^2\\
   &\|g_i\|^2\leq B^2.
    \end{cases} \label{21}
\end{align}
\end{lemma}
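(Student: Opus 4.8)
The plan is to show that the only difference between the definition \eqref{22} of $\tilde p_{\FuB}^{i,j}$ and the tractable form \eqref{21} is that the former evaluates the quadratic correction at the \emph{specific} projection $C_{ij}^\star$ of $x_j$ onto $\mathcal{B}_{g_i/\mu+x_i,\,B/\mu}$, whereas the latter existentially quantifies over any point $C_{ij}$ lying in that same ball. The first step is therefore to reinterpret the ball constraint: since $g_i+\mu(x_i-C_{ij})=\mu\big(\tfrac{g_i}{\mu}+x_i-C_{ij}\big)$, the condition $\|g_i+\mu(x_i-C_{ij})\|^2\le B^2$ is, for $\mu>0$, exactly $\|C_{ij}-(\tfrac{g_i}{\mu}+x_i)\|\le \tfrac{B}{\mu}$, i.e. $C_{ij}\in\mathcal{B}_{g_i/\mu+x_i,\,B/\mu}$. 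Thus \eqref{21} reads: there exists a point $C_{ij}$ in this ball satisfying the descent inequality together with $\|g_i\|\le B$.

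For the forward implication, I would simply take $C_{ij}=C_{ij}^\star$, the exact projection used in \eqref{22}. A projection onto a set always lands in that set, so the ball constraint is met; the descent inequality and $\|g_i\|\le B$ hold verbatim by the assumption $\tilde p_{\FuB}^{i,j}\ge 0$. Hence the existential statement \eqref{21} holds.

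The converse is where the geometry of the projection enters. Suppose some feasible $C_{ij}$ in the ball satisfies the descent inequality. Because $C_{ij}^\star$ minimizes $\|x_j-\cdot\|$ over the (closed, convex) ball, we have $\|x_j-C_{ij}^\star\|\le\|x_j-C_{ij}\|$, and since $\mu\ge 0$ this yields $\tfrac{\mu}{2}\|x_j-C_{ij}^\star\|^2\le\tfrac{\mu}{2}\|x_j-C_{ij}\|^2$. Consequently, writing $A:=f_i+\langle g_i,x_j-x_i\rangle-\tfrac{\mu}{2}\|x_i-x_j\|^2$, the feasibility of $C_{ij}$ gives $f_j\ge A+\tfrac{\mu}{2}\|x_j-C_{ij}\|^2\ge A+\tfrac{\mu}{2}\|x_j-C_{ij}^\star\|^2$, which is exactly the inequality defining $\tilde p_{\FuB}^{i,j}\ge 0$; the bound $\|g_i\|\le B$ transfers directly.

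The single point requiring care --- the main obstacle, though it is conceptual rather than technical --- is getting the direction of the projection inequality right: because the projection attains the \emph{smallest} distance to $x_j$ and $\mu\ge 0$, the quadratic correction at $C_{ij}^\star$ is the \emph{smallest} among all feasible $C_{ij}$, which is precisely what makes relaxing the equality ``$C_{ij}=C_{ij}^\star$'' into membership ``$C_{ij}\in\mathcal{B}_{g_i/\mu+x_i,\,B/\mu}$'' lossless in the implication we need. I would also record the degenerate case $\mu=0$, where the correction term and the ball constraint vanish and both \eqref{22} and \eqref{21} collapse to $p_{\FuB}$, so the equivalence is immediate there.
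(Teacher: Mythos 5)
Your proof is correct and takes essentially the same approach as the paper's: the forward direction instantiates $C_{ij}$ as the projection $C_{ij}^\star$ itself, and the converse uses that the projection minimizes $\|x_j-\cdot\|$ over the ball $\mathcal{B}_{\frac{g_i}{\mu}+x_i,\frac{B}{\mu}}$, so $\frac{\mu}{2}\|x_j-C_{ij}^\star\|^2\leq \frac{\mu}{2}\|x_j-C_{ij}\|^2$ and the existential relaxation is lossless. Your explicit translation of the algebraic constraint $\|g_i+\mu(x_i-C_{ij})\|^2\leq B^2$ into ball membership and your remark on the degenerate case $\mu=0$ are small additions that the paper leaves implicit.
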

\begin{proof}
    Denote by $C_{ij}^*$ the projection of $x_j$ onto $\mathcal{B}_{\frac{g_i}{\mu}+x_i, \frac{B}{\mu}}$. We prove that 
      \begin{align}
        &\quad\  \exists C_{ij}\in \mathcal{B}_{\frac{g_i}{\mu}+x_i, \frac{B}{\mu}}: \ f_j\geq f_i+\langle g_i, x_j-x_i\rangle-\frac{\mu}{2}\|x_i-x_j\|^2+\frac{\mu}{2}\|x_j-C_{ij}\|^2\label{1}\\
        &\Leftrightarrow 
        \ f_j\geq f_i+\langle g_i, x_j-x_i\rangle-\frac{\mu}{2}\|x_i-x_j\|^2+\frac{\mu}{2}\|x_j-C_{ij}^*\|^2. \label{2}
    \end{align}
    Clearly, \eqref{2} implies \eqref{1}. Suppose now \eqref{1} is satisfied. Since $\|x_j-C_{ij}\|\geq \|x_j-C_{x_iy}^*\|$, \eqref{2} is satisfied.
\end{proof}
We are now ready to propose a tractable program equivalent to Problem \eqref{PEP_wc}. Let
  \begin{align*}
    F&=[f_0,...,f_N, f_0^y, ..., f_N^y], \\P&=[x_0,g_0,...,g_N, g_0^y, ..., g_N^y, C_{x_0, x_1}, ..., C_{x_0,y_N}, C_{x_0,x^*}, ...,C_{y_0,x_0}, ..., C_{y_N,x^*}],
\end{align*}
where $C_{x_0, x_1}, ..., C_{y_N,x^*}$ englobe the variables $C$ associated to any pair of points in $\{x_i\}_{i\in [N]\cup^*}\cup \{y_i\}_{i\in [N]}$. Let $G=P^TP$. \com{We seek to obtain matrices and vectors $A_{ij}$, $c_{ij}$, $b$, and $A_R$ allowing to express \eqref{PEP_wc} as a semidefinite program.}

\com{We first define vectors allowing to access to the variables $x_i,\ y_i,\ g_i,\ g^y_i$ and $C_{ij}$ from $P$.} Let $e_i$ be the unit vector of index $i$. For $i\in [N]$, let 
  \begin{align*}
   &u_i= e_{i+2}, \ v_i=e_{N+i+2}, \ a_i=e_1-\sum_{k=1}^i h u_{k+1}\text{ and } b_i=e_1-\sum_{k=0}^{i} h v_k. 
\end{align*}
In addition, let $a^*=u^*=[0,...,0]$. Then, $g_i=Pu_i$, $g^y_i=Pv_i$, $x_i=Pa_i$, $y_i=Pb_i$, $x^*=Pa^*$ and $g^*=Pu^*$. For $i,j\in [N]$, $i\neq j$, let    \begin{align*}
    &c^{xx}_{ij}=e_{2(N+1)+2+iN+j}, &&\ c^{xy}_{ij}=e_{2(N+1)+2+N^2+iN+j}, \\& c^{yx}_{ij}=e_{2(N+1)+2+2N^2+iN+j}, &&\ c^{yy}_{ij}=e_{2(N+1)+2+3N^2+iN+j}\\&
    c^{x*}_i=e_{2(N+1)+2+4N^2+i}, &&\ c^{*x}_i=e_{3(N+1)+2+4N^2+i}, \\&  c^{y*}_i=e_{4(N+1)+2+4N^2+i},&&\ c^{*y}_i=e_{5(N+1)+2+4N^2+i}, 
\end{align*} implying $C_{x_i,x_j}=Pc^{xx}_{ij}$, $C_{x_i,y_j}=Pc^{xy}_{ij}$, $C_{y_i,x_j}=Pc^{yx}_{ij}$,  $C_{y_i,y_j}=Pc^{yy}_{ij}$, $C_{x_i,x^*}=Pc^{x*}_{i}$, $C_{y_i,x^*}=Pc^{y*}_{i}$, $C_{x^*,x_i}=Pc^{*x}_{i}$ and $C_{x^*,y_i}=Pc^{*y}_{i}$. 

\com{We now define matrices $A_R$ (initial condition), $A_{ij}$ ($f_j\geq f_i+\langle g_i,x_j-x_i\rangle-\frac{\mu}{2}\|x_i-x_j\|^2+\frac{\mu}{2}\|x_j-C_{+j}\|^2$), $E_{ij}$ ($\|g_i+\mu(x_i-C_{ij})\|^2\leq B^2$), $H_i$ ($\|g_i\|^2\leq B^2$) and  vector $\tau$ (objective) translating \eqref{PEP_wc} into a semidefinite program.}
    \begin{align*}
     A_R&=(a_0-b_0)(a_0-b_0)'
     \\ 2A^{xy}_{ij}&=(v_j(a_i-b_j)'+(a_i-b_j)v_j')-\mu(a_i-b_j)(a_i-b_j)'+\mu (a_i-c_{ij}^{xy})(a_i-c_{ij}^{xy})' \\ E^{xy}_{ij}&=(v_j+\mu (b_j-c_{ij}^{xy}))(v_j+\mu (b_j-c_{ij}^{xy}))'
     \\  H_i^x&=v_iv_i', \quad  H_i^y=u_i u_i' \\
      \tau&=\frac{1}{(N+1)\rho^2} \sum_{i=0}^N (a_i-b_i)(a_i-b_i)^2, 
 \end{align*}
 where $A^{xx}$, $A^{yx}$, $A^{yy}$, $E^{xx}$, $E^{yx}$, $E^{yy}$ are defined similarly as $A^{xy}$, $E^{xy}$ with the corresponding $a$, $b$, $v$ or $u$, to be able to impose interpolation conditions on any pair $(x_i,x_j)$, $(x_i,y_j)$, $(y_i,x_j)$ and $(y_i,y_j)$. Problem \eqref{PEP_wc} can then be rewritten as    \begin{equation}\label{PEP_wc_SDP}
    \begin{aligned}
   & \textcolor{white}{hihi}&&\underset{G\geq 0, F }{\max\ } Tr(\tau G)\\
     &\quad \quad \quad \  \text{ s.t. }&&\textcolor{white}{hihi}  \\
     &\tilde p_{\FuB}\geq 0:&&F(e_i-e_j)+Tr (GA_{ij}^{xx})\leq 0, &Tr(GE^{xx}_{ij})\leq B, \ i,j\in [N],* \\&\textcolor{white}{hihi} &&F(e_i-e_{j+N+1})+Tr (GA_{ij}^{xy})\leq 0, & Tr(GE^{xy}_{ij})\leq B, \ i,j\in [N],*
     \\&\textcolor{white}{hihi} && F(e_{i+N+1}-e_j)+Tr (GA_{ij}^{yx})\leq 0, & Tr(GE^{yx}_{ij})\leq B, \ i,j\in [N],*
     \\&\textcolor{white}{hihi} && F(e_{i+N+1}-e_{j+N+1})+Tr (GA_{ij}^{yy})\leq 0, &Tr(GE^{yy}_{ij})\leq B, \ i,j\in [N],*\\
     &\textcolor{white}{hihi}&& Tr(GH_i^x)\leq B, & Tr(GH_i^y)\leq B, i\in [N] \quad \ \quad 
     \\&\text{Initial cond.:}&& Tr (GA_r)\leq R^2 \\
     & \text{Optimality:} && Fe_i\geq0,\  Fe_{i+N+1}\geq f^*,\  i=0...,N.
     \\& G=PP^T:&& \text{rank } G\leq d
\end{aligned}
\end{equation}
Relaxing the rank constraint gives a convex semidefinite program and furnishes valid solutions for any $d\geq \text{ dim} G$ \cite{taylor2017smooth}. 
\section{Proof of Proposition \ref{propchiante}}\label{app:propchiante}
\begin{proof}
    First, note that given $p$ of the form \eqref{consist}, $p^{ij}\geq 0$ implies
     \begin{align*}
        f_i&\geq f_j+\langle g_j,x_i-_j\rangle+\alpha \|g_i-g_j+\frac{\gamma}{2\alpha}(x_i-x_j)\|^2+(\beta-\frac{\gamma^2}{4\alpha})\|x_i-x_j\|^2\\
        &\geq f_j+\langle g_j,x_i-x_j\rangle+(\beta-\frac{\gamma^2}{4\alpha}) \|x_i-x_j\|^2=f(y)+\langle g_j,x_i-x_j\rangle+\omega(\|x_i-x_j\|).
    \end{align*}
    Hence, $p^{ij}\geq 0$ implies \eqref{eq:subdiff_gx} and $p$ is differentially consistent.

    \ARj{Consider now a constraint $p$ of the form \eqref{generalConstraint} but that cannot be written as in \eqref{consist}. Consider $f:\R^d\to\R$ and $g:\R^d\to \R^d$ satisfying $p$ everywhere, and suppose that, at some $y\in \R^d$, $f$ is twice differentiable. For any $\varepsilon \in \R^d$, let $x=y+\varepsilon$, and consider the Taylor development of order 1 of $f$ and $g$ at $x$:
     \begin{align*}
        &f(x)=f_y+\langle \varepsilon,f'_y\rangle +\mathcal{O}(\varepsilon^2), \quad
        g(x)=g_y+ g'_y\varepsilon+\mathcal{O}(\varepsilon^2),
    \end{align*}
    where $f_y$, $g_y$, $f'_y$ and $g'_y$ denote, respectively, $f(y)$, $g(y)$, $\nabla f(y)$ and $\nabla^2f(y)$. Satisfaction of $p^{xy}$ then implies:
     \begin{align*}
        &f_y+\langle \varepsilon, f'_y\rangle +\mathcal{O}(\varepsilon^2)\geq f_y+B(\|g_y\|^2+2\langle g_y,g'_y\varepsilon\rangle)-(B+C)(\|g_y\|^2+\langle g_y,g'_y\varepsilon\rangle)\\&\quad\quad\quad\quad\quad\quad\quad\quad\quad+E(\|y\|^2+2\langle y,\varepsilon \rangle)+F\|y\|^2-(E+F)(\|y\|^2+\langle y,\varepsilon\rangle)\\&\quad\quad\quad\quad\quad\quad\quad\quad\quad+H(\langle g_y,y\rangle +\langle g_y,\varepsilon\rangle +\langle g'_y\varepsilon,y\rangle)+I(\langle g_y,y\rangle +\langle g'_y\varepsilon,y\rangle)\\&\quad\quad\quad\quad\quad\quad\quad\quad\quad+J(\langle g_y,y\rangle +\langle g_y,\varepsilon\rangle)-(I+J+K)\langle g_y,y\rangle +\mathcal{O}(\varepsilon^2)\\
        \Leftrightarrow& \langle \varepsilon, f_y'\rangle \geq (B-C)\langle g_y,g'_y\varepsilon\rangle)+(E-F)\langle y,\varepsilon \rangle+(H+J)\langle g_y,\varepsilon\rangle+(H+I) \langle g'_y\varepsilon,y\rangle) +\mathcal{O}(\varepsilon^2).
    \end{align*}   
    Since this holds for any choice of $\varepsilon$, it implies for any $k=1,\cdots,d$:
                 \begin{align*}
        f_y^{'(k)}=(H+J) g_y^{(k)}+(B-C)(g'_yg_y)^{(k)}+(E-F)y^{(k)}+(H+I)(g'_y y)^{(k)}.
    \end{align*}
Unless $B=C$, $E=F$, $H=-I$, and $H+J=1$, i.e., the parameters corresponding to \eqref{consist}, $g_y\neq f_y'$.}
\end{proof}
\end{document}